\newcommand{\norm}[1]{\left\lVert#1\right\rVert}
\newcommand{\Z}{\mathbb{Z}}
\newcommand{\R}{\mathbb{R}}
\newcommand{\Q}{\mathbb{Q}}
\newcommand{\M}{\sigma}
\renewcommand{\S}{\mathcal S}
\newcommand{\eps}{\varepsilon}
\newcommand{\hprime}{\widetilde {h}^2}
\renewcommand{\deg}{k}
\newcommand{\ltwo}{\norm{\cdot}_2}
\newcommand{\lone}{\norm{\cdot}_1}
\newcommand{\comm}[1]{{\color{blue}{[#1]}}}
\DeclareMathOperator{\poly}{poly}
\DeclareMathOperator{\polylog}{\poly\log}
\DeclareMathOperator{\vol}{vol}
\DeclareMathOperator{\area}{area}
\DeclareMathOperator{\inj}{inj}
\DeclareMathOperator{\diam}{diam}
\DeclareMathOperator{\Isom}{Isom}
\DeclareMathOperator{\Aut}{Aut}
\newcommand{\sing}{\mathrm{sing}}
\newcommand{\mass}{\mathrm{M}}
\newcommand{\comass}{\mathrm{CM}}
\newcommand{\exact}{\mathrm{exact}}
\newcommand{\coexact}{\mathrm{coexact}}
\DeclareMathOperator{\coker}{coker}
\newcommand{\Ht}[1]{H_1(#1,\Z)_{tors}}
\newtheorem{theorem}{Theorem}[section]
\newtheorem{corollary}{Corollary}[theorem]
\newtheorem{lemma}[theorem]{Lemma}
\newtheorem{prop}[theorem]{Proposition}
\newtheorem{conjecture}[theorem]{Conjecture}
\newtheorem{question}[theorem]{Question}
\theoremstyle{definition}
\newtheorem{definition}[theorem]{Definition}
\newtheorem{construction}[theorem]{Construction}
\theoremstyle{remark}
\newtheorem{remark}[theorem]{Remark}
\title{Expansion and torsion homology of 3-manifolds}
\author{Jonathan Zung}
\begin{document}

\begin{abstract}
    A Riemannian manifold is a called a good rational expander in dimension $i$ if every $i$-cycle  bounds a rational $i+1$-chain of comparatively small volume. We construct 3-manifolds which are good expanders in all dimensions. On the other hand, we show that expanders must be topologically complicated: they must have lots of torsion homology. We also give some applications to topological overlap problems, constructing examples of 3-manifolds with large width over $\R^2$.
\end{abstract}
\maketitle

\begin{section}{Introduction}
Let $M$ be a closed oriented Riemannian 3-manifold. When is $M$ close to being a homology 3-sphere? A homology 3-sphere has vanishing Betti numbers and vanishing torsion homology. One can make quantitative the degree to which $M$ satisfies each of these properties:

\begin{table}[h]
    \centering
    \begin{tabular}{ccc}
$b_0(M)=1$ & $\leadsto$ & Every pair of points is connected by a path of length $<\frac 1 {h_0}$\\
$b_1(M)=0$ & $\leadsto$ & Every loop of length $\ell$ (rationally) bounds a surface of area $<\frac \ell {h_1}$\\
$b_2(M)=0$ & $\leadsto$ & Every surface of area $A$ bounds a region of volume $<\frac A {h_2}$\\
$H_1(M,\Z)_{tors}=0$ & $\leadsto$ & $|H_1(M,\Z)_{tors}|$ is small
    \end{tabular}
    \label{tab:my_label}
\end{table}
The numbers $h_0$, $h_1$ and $h_2$ are called Cheeger constants for $M$. Their study fits into the growing field of high dimensional expansion; see the survey by Lubotzky \cite{lubotzky_high_2017}. For more precise definitions, see \cref{sec:cheeger}.  When $1/h_i$ is small, we say that $M$ has good expansion in dimension $i$.  In this paper, we study the following question:

\begin{question}\label{question:1}
    Does there exist a sequence of Riemannian 3-manifolds $\{M_i\}$ having bounded geometry and $\vol(M_i)\to \infty$, and controlled values of $\frac 1 {h_0(M_i)}$, $\frac 1 {h_1(M_i)}$, $\frac 1 {h_2(M_i)}$, and $|H_1(M_i,\Z)_{tors}|$? For example, could we take all these constants to be of size $o(\vol(M_i)^\varepsilon)$?
\end{question}

$1/h_0$ is simply the diameter of $M$, so it is easy to arrange $1/h_0 \lesssim \log(\vol(M_i))$. It is also easy to construct examples of 3-manifolds with $1/h_2$ uniformly bounded above and $|H_1(M,\Z)_{tors}|=1$; one can simply construct a manifold which coarsely looks like an expander graph. In prior work with Abdurrahman et al, we investigated sequences of hyperbolic 3-manifolds with $H_1(M,\Z)=0$ and $1/h_1$ uniformly bounded above \cite{abdurrahman_hyperbolic_2024}. In the present paper, we construct a sequence of 3-manifolds with simultaneous control on $h_0$, $h_1$ and $h_2$:

\begin{theorem}\label{thm:0}
    There exists a sequence of rational homology 3-spheres $M_i$ equipped with metrics of bounded geometry such that $\vol(M_i)\to \infty$ and $$\max\left(\frac 1 {h_0(M_i)}, \frac 1 {h_1(M_i)}, \frac 1 {h_2(M_i)}\right)\lesssim\poly(\log(\vol(M_i))).$$ In a similar vein, there exists a sequence of rational homology 3-spheres $M_i$ with metrics of bounded geometry, $\vol(M_i) \to \infty$, and the spectral gap for the Hodge Laplacian on all differential forms uniformly bounded below.
\end{theorem}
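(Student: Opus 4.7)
The plan is to build each $M_i$ as a graph of 3-manifolds assembled over an expander graph $G_i$, and then extract each of the Cheeger bounds from a combination of the expansion of $G_i$ and the topology of the pieces. Concretely, I would fix a $d$-regular expander $G_i$ on $n_i \to \infty$ vertices with uniform spectral gap, fix a compact ``vertex piece'' $B$ (a 3-manifold with $d$ boundary components, each diffeomorphic to a closed surface $\Sigma$), and glue copies of $B$ along edges of $G_i$ using a fixed orientation-reversing diffeomorphism of $\Sigma$. The resulting closed $M_i$ then has bounded geometry and $\vol(M_i) \asymp n_i$. To make $M_i$ a rational homology sphere I would choose $B$ and the gluings so that the Mayer--Vietoris long exact sequence over $G_i$ forces $b_1(M_i) = b_2(M_i) = 0$; this imposes a compatibility condition between the first Betti number of $G_i$ and the way $H_1(\Sigma;\Q)$ maps into $H_1(B;\Q)$.

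The bounds on $1/h_0$ and $1/h_2$ are then the easy part. Since $\diam(G_i) = O(\log n_i)$, we get $\diam(M_i) \lesssim \log(\vol M_i) \cdot \diam(B)$, hence $1/h_0 \lesssim \log(\vol M_i)$. Edge expansion of $G_i$, combined with the fact that any separating 2-cycle is, up to small corrections inside the pieces, a sum of copies of $\Sigma_e$ along some edge cut, yields $h_2 = \Omega(1)$.

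The critical step is $h_1$. Here my plan is a two-step reduction: first, use the bounded diameter of each $B$ to homologize an arbitrary 1-cycle $\gamma \subset M_i$ to one supported on the union of gluing surfaces $\bigcup_e \Sigma_e$ at area cost $\lesssim |\gamma|$; second, kill the resulting class in $\bigoplus_e H_1(\Sigma_e;\Q)$ via cobordisms inside the pieces $B$. The second step amounts to a 1-coboundary expansion statement for a 2-dimensional simplicial complex built from $G_i$ together with the action of the gluings on $H_1(\Sigma)$, and is the technical heart of the argument. I expect bare spectral expansion of $G_i$ to be insufficient: one will likely need an arithmetic or random choice of the gluing data, in the spirit of the construction in the author's joint work \cite{abdurrahman_hyperbolic_2024}, to guarantee this higher-dimensional expansion. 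This is the main obstacle.

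For the spectral gap on all forms, the $h_2$ bound gives a uniform spectral gap for $\Delta$ on $\Omega^0$ via Cheeger's inequality, and the $h_1$ bound gives a gap on the co-exact part of $\Omega^1$ via the higher-degree Cheeger--Buser inequality (valid under the bounded geometry assumption). Hodge duality $\ast \colon \Omega^i \to \Omega^{3-i}$ transports both gaps to degrees $2$ and $3$. Since $b_1(M_i) = b_2(M_i) = 0$ there are no harmonic forms in those degrees, so the full spectrum of the Hodge Laplacian on every $\Omega^i$ is uniformly bounded below. The whole argument hinges on the $h_1$ estimate; arranging it alongside $h_0, h_2$, bounded geometry, and the rational homology sphere condition requires a delicate interplay between the combinatorial expansion of $G_i$ and the topology chosen for the pieces $B$.
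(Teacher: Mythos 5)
Your approach is genuinely different from the paper's, and it has a topological obstruction that makes it unworkable as stated, plus a second gap that you yourself flag.

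The fatal issue is the rational homology sphere requirement. In a graph-of-manifolds assembled over a $d$-regular graph $G_i$ with connected vertex pieces $B_v$ and connected gluing surfaces $\Sigma_e$, the Mayer--Vietoris sequence for the decomposition into vertex pieces and edge collars gives an exact fragment
\begin{equation*}
H_1(M_i;\Q) \longrightarrow H_0\Bigl(\bigsqcup_e \Sigma_e \times \{0,1\}\Bigr) \longrightarrow H_0\Bigl(\bigsqcup_v B_v\Bigr)\oplus H_0\Bigl(\bigsqcup_e \Sigma_e\Bigr),
\end{equation*}
and the kernel of the right-hand map is the cycle space of $G_i$, of dimension $b_1(G_i) = (d/2-1)n_i + 1$. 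Exactness forces $b_1(M_i;\Q) \geq b_1(G_i) \to \infty$ regardless of how you choose $B$ or the gluings. No tuning of the maps $H_1(\Sigma) \to H_1(B)$ can fix this; the obstruction lives one degree down, in $H_0$ of the edge surfaces, where the graph's own first Betti number shows up. So the construction as you describe it cannot produce rational homology spheres over an expander graph. (If you tried to fix this by using gluing surfaces $\Sigma = S^2$, you would get a connect sum, which has even larger $b_1$, and you'd still need to kill it somehow.)

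This is precisely why the paper does not start from an expander \emph{graph}, but from a 2-dimensional expander/coexpander \emph{complex} $X$. The paper's $\M(X)$ is built with one $S^3$ per vertex and one $S^2\times I$ tube per edge, so $\M(X)$ is a connect sum of $S^1\times S^2$'s with $b_1(\M(X)) \approx b_1(X^{(1)})$; the 2-cells of $X$ are then encoded as Dehn surgery curves $\M(f)$, and surgery on them kills $H_1$. The filling estimate for 1-cycles in $\M(X,q)$ (\cref{lem:fillcyc} and \cref{lem:fillmerid}) is extracted directly from $h_1(X)$ and $\hprime(X)$ of the 2-complex, rather than being something one must hope to manufacture from a graph. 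So the paper's construction bakes in exactly the ``1-coboundary expansion statement'' that you correctly identify as the heart of the matter and describe as ``the main obstacle'' --- it is supplied up front by choosing $X$ to be (for instance) the 2-skeleton of a hypercube or a Bruhat--Tits quotient, not derived from an expander graph plus clever gluing data.

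Your $h_0$ and $h_2$ reasoning and the final deduction of the spectral gap from the Cheeger bounds via Hodge duality are sound in outline and consistent with what the paper does in \cref{prop:simplicial}, \cref{thm:cheeger}, and \cref{cor:l2}. But the $h_1$ bound and the rational homology sphere condition are both unresolved in your proposal, and the second one is not merely difficult but impossible in the graph-of-manifolds framework you set up.
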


Our construction starts with a simplicial 2-complex having both good expansion and coexpansion properties. The 3-manifold we build inherits good expansion properties. The two parts of \cref{thm:0} pertain to Cheeger constants with $L^1$ and $L^2$ norms. Lipnowski--Stern, Boulanger--Courtois, and Rudd proved theorems relating $L^1$ and $L^2$ Cheeger constants as well as stable commutator length \cite{lipnowski_geometry_2018, boulanger_cheeger-like_2022, rudd_stable_2021}. Along the way, we will prove a similar Cheeger--Buser inequality (\cref{thm:cheeger}) which simplifies some of their results and generalizes to all $i$-forms. This addresses a question of Boulanger--Courtois \cite[Section 6, Question 1]{boulanger_cheeger-like_2022}. The second part of \cref{thm:0} is a response to a question of Sarnak, who asked for a sequence of \emph{hyperbolic} 3-manifolds with uniform spectral gap for $i$-forms. It is likely that the hyperbolic metric for our examples also enjoys a uniform spectral gap, but we leave that question to future work. See \cref{sec:questions} for more discussion.\\

Our second main result illustrates a tension between the four properties. 
\begin{theorem}\label{thm:superpolytorsion}
    For any Riemannian rational homology 3-sphere $M$ and any $k > 0$, we have
    $$|\Ht M| \geq c \left({h_0(M) h_1(M) h_2(M)}\vol(M)\right)^k$$ for some constant $c$ depending only on $k$.
\end{theorem}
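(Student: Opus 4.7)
The plan is to pass to a bounded-geometry triangulation, use a Smith-normal-form computation to express $|T|$ as a ratio of singular-value products of the cellular boundary maps, and then control these products using Cheeger--Buser and combinatorial Poincaré duality.

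First, I would fix a bounded-degree triangulation $K$ of $M$ with $N \asymp V := \vol(M)$ simplices, so that the smooth Cheeger constants $h_i(M)$ are comparable to the $L^1$ Cheeger constants of the simplicial boundary maps $\partial_i \colon C_i(K,\R) \to C_{i-1}(K,\R)$. Applying the Cheeger--Buser inequality (\cref{thm:cheeger}) would convert this into a lower bound on the smallest nonzero singular value, $\sigma_{\min}(\partial_i) \gtrsim h_{i-1}^{O(1)}$, while the bounded degree gives $\sigma_{\max}(\partial_i) = O(1)$.

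Next, I would express $|T| = [\ker\partial_1(\Z) : \operatorname{im}\partial_2(\Z)]$ in terms of lattice covolumes. Because $M$ is a rational homology $3$-sphere, $\ker\partial_1(\Z)$ and $\ker\partial_2(\Z) = \operatorname{im}\partial_3(\Z)$ are saturated sublattices, so the identity $\operatorname{covol}(L)\cdot\operatorname{covol}(\Z^n/L)=1$ (for saturated $L \subset \Z^n$) combined with $\operatorname{covol}(\operatorname{im}\partial_i) = \prod_j \sigma_j(\partial_i)\,/\,\operatorname{covol}(\ker\partial_i)$ yields
\[
|T| \;=\; c_K \cdot \frac{\prod_j \sigma_j(\partial_2)}{\prod_j \sigma_j(\partial_1)\,\prod_j \sigma_j(\partial_3)},
\]
where $c_K = \sqrt{N_0 N_3}$ comes from the covolumes of $\ker\partial_3(\Z)=\Z\cdot[M]$ and of $\operatorname{im}\partial_1(\Z)$ (the $A_{N_0-1}$ root lattice inside $\Z^{N_0}$), and the products range over nonzero singular values.

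Finally, I plug in the spectral estimates. The numerator has $\asymp V$ factors bounded below by $h_1^{O(1)}$, but the denominator also has $\asymp V$ factors, naively only $O(1)$-bounded above, which is too weak to conclude anything. To remedy this I would invoke combinatorial Poincaré duality between $K$ and its dual block decomposition $K^*$: this identifies $\sigma_j(\partial_i^K)$ with $\sigma_j(\partial_{4-i}^{K^*})$, so applying the Cheeger--Buser step to $K^*$ gives lower bounds on $\sigma_j(\partial_1^K)$ and $\sigma_j(\partial_3^K)$ in terms of $h_2$ and $h_0$ respectively. Assembling everything yields $\log|T| \gtrsim V\cdot\poly(h_0,h_1,h_2)$, which exceeds $k\log(h_0 h_1 h_2 V)$ for any fixed $k$ once $h_0 h_1 h_2 V$ is sufficiently large. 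The main obstacle is that the crude bound $\sigma_j \geq \sigma_{\min}$ loses a multiplicative factor at each of $\sim V$ places, which is fatal; one really needs a spectral-density estimate controlling $\#\{j : \sigma_j \leq t\}$, or a Bismut--Zhang-style comparison between the cellular and Hodge bases, to make the product estimate sharp.
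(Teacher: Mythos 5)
Your proposal takes a genuinely different route from the paper's, and it contains a gap you correctly flag but which I don't think can be filled under the theorem's hypotheses. The issue is exactly the one you name at the end: the Cheeger constants only control $\sigma_{\min}(\partial_i)$, not the bulk of the spectrum, and the Smith-normal-form/covolume identity requires bounding a product of $\sim V$ singular values. If you replace each factor of $\prod_j \sigma_j(\partial_2)$ by the crude lower bound $\sigma_{\min}\gtrsim h_1^{O(1)}$ and each denominator factor by the bounded-degree upper bound $O(1)$, you obtain $\log|T|\gtrsim V\log h_1 - CV$, which is negative or vacuous whenever $h_1$ is bounded; the intermediate claim ``$\log|T|\gtrsim V\cdot\poly(h_0,h_1,h_2)$'' does not follow from the estimates preceding it (the sign is effectively reversed). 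What you would actually need is a density estimate on $\#\{j:\sigma_j\leq t\}$ near $t=0$ --- precisely the ``few small eigenvalues'' condition of Bergeron--\c{S}eng\"un--Venkatesh (\cref{thm:bsv}), which the paper explicitly flags as currently unverifiable in any example (\cref{subsec:comparebv}) and which is not implied by a bound on the spectral gap alone. The Poincar\'e duality step also does not help: it relates singular values of $\partial_i^K$ to those of a boundary map on the dual complex $K^*$, but this gives \emph{lower} bounds on the denominator factors, which is the wrong direction, and in any case still only controls the smallest one.

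The paper's argument avoids the spectral-density obstruction entirely by working geometrically. \cref{lem:contractcover} runs a Gromov-overlap chain contraction of the covering projection $\pi\colon\widetilde M\to M$ (rather than of the identity on $M$), using $\Z$-coefficients because lifted loops are integrally null-homologous; this yields $\diam(\widetilde M)\cdot\tfrac{1}{h_1}\cdot\tfrac{1}{h_2}\gtrsim\vol(M)$, where $\widetilde M$ is the universal abelian cover. \cref{lem:torsdiameter} then bounds $\diam(\widetilde M)/\diam(M)\lesssim_\delta|\Ht M|^\delta$ for every $\delta>0$, using the Benjamini--Finucane--Tessera scaling-limit theorem for vertex-transitive graphs to show that a Cayley graph of a finite abelian group with very large diameter relative to its order would force an extra circle factor in the $H_1$ of $M$, a contradiction. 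Combining these two lemmas with $h_0=2/\diam(M)$ gives $|\Ht M|^{1/k}\gtrsim h_0h_1h_2\vol(M)$. Note the paper only obtains a superpolynomial bound, not the exponential $\log|T|\gtrsim V$ that a successful torsion-determinant argument would give; that weaker conclusion is exactly what is available without controlling the near-zero spectral density. If you want to rescue your approach, the honest statement is that it reproves \cref{thm:bsv} rather than \cref{thm:superpolytorsion}: the hypotheses you need are stronger than, and not implied by, the Cheeger-constant bounds appearing in the theorem statement.
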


In particular, if $1/h_0$, $1/h_1$, and $1/h_2$ are $o(\vol(M)^\varepsilon)$, then $|\Ht M|$ must grow superpolynomially fast in $\vol(M)$. This gives a negative answer to \cref{question:1}. Note that the Cheeger constants all have units of $length^{-1}$, so both sides of the inequality in \cref{thm:superpolytorsion} are dimensionless. This theorem is reminiscent of the fact that an expander family of Riemannian surfaces must have unbounded genus \cite{yang_eigenvalues_1980}. Applying \cref{thm:cheeger}, we obtain a relationship between the torsion homology of $M$ and its spectral geometry:
\begin{corollary}{\label{cor:spectralrestatement}}
    For any rational homology 3-sphere $M$ with a metric of 1-bounded geometry and any $k>0$, we have $$|H_1(M,\Z)_{tors}|>c\left(\frac 1 {\diam M}  \lambda_1^0\sqrt{\lambda_1^1}\sqrt{\vol(M)}\right)^k$$
    for some constant $c$ depending only on $k$. Here, $\lambda_1^0$ and $\lambda_1^1$ are the spectral gaps of the Hodge Laplacian on functions and coexact 1-forms respectively.
\end{corollary}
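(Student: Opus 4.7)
The plan is to deduce the corollary from \cref{thm:superpolytorsion} by replacing each of the three Cheeger constants with a corresponding spectral quantity, factor by factor. First, by the definition in the table of the introduction, $h_0(M) = 1/\diam(M)$, so this factor needs no conversion. Second, for $h_2(M)$: since $M$ is a rational homology 3-sphere, every smooth 2-cycle rationally bounds, and the minimal filling is comparable (up to a bounded-geometry constant) to the smaller of the two regions the cycle cuts off. Hence $h_2(M)$ agrees up to a uniform constant with the classical isoperimetric Cheeger constant of $M$, and the Buser half of the classical Cheeger--Buser inequality, valid under 1-bounded geometry, gives $h_2(M) \gtrsim \sqrt{\lambda_1^0}$. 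Third, for $h_1(M)$, I would apply the higher-degree Cheeger--Buser inequality \cref{thm:cheeger} alluded to earlier, obtaining $h_1(M) \gtrsim \sqrt{\lambda_1^1}$.

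Substituting these three estimates into \cref{thm:superpolytorsion} yields
$$|H_1(M,\Z)_{tors}| \geq c\left(\frac{1}{\diam(M)}\sqrt{\lambda_1^0\,\lambda_1^1}\,\vol(M)\right)^k.$$
This is in fact at least as strong as the stated corollary in the regime $\vol(M) \to \infty$: writing $\sqrt{\lambda_1^0}\,\vol(M) = \lambda_1^0\,\sqrt{\vol(M)} \cdot \sqrt{\vol(M)/\lambda_1^0}$, the factor $\sqrt{\vol(M)/\lambda_1^0}$ is bounded below by a uniform constant under 1-bounded geometry, which provides a uniform upper bound on $\lambda_1^0$ and a uniform lower bound on $\vol(M)$ via the injectivity radius. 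Absorbing this factor into the constant $c$ (and, if convenient, adjusting the exponent $k$, which is an arbitrary parameter) produces the form stated in the corollary.

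The main obstacle is the higher-degree step: the passage $h_1 \gtrsim \sqrt{\lambda_1^1}$ relies on \cref{thm:cheeger}, the $L^1$-to-$L^2$ Cheeger--Buser inequality for $i$-forms that is one of the novel technical contributions of the paper. The $h_2$ comparison, by contrast, reduces to the classical Cheeger--Buser theory for domains once one invokes that $M$ is a rational homology sphere, and the $h_0 = 1/\diam(M)$ identification is tautological from the definitions. Combining these three substitutions with \cref{thm:superpolytorsion} closes the argument.
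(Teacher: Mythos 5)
Your strategy — convert each Cheeger factor in \cref{thm:superpolytorsion} to a spectral quantity and multiply out — is exactly the paper's, but two of your three conversions are quantitatively wrong, and your ``stronger'' intermediate bound is not actually established.

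The crucial error is in the $h_1$ step: you claim \cref{thm:cheeger} gives $h_1(M)\gtrsim\sqrt{\lambda_1^1}$, but you have dropped the $\sqrt{\vol(M)}$ prefactor that appears in the theorem's statement. The theorem gives
\[
\frac{1}{h_{1,\exact}(M)}\le C\sqrt{\vol(M)}\Bigl(\frac{1}{\sqrt{\lambda_1^1}}\Bigr)+D',
\]
so the correct conclusion is $h_1(M)\gtrsim\sqrt{\lambda_1^1/\vol(M)}$, weaker than what you wrote by exactly a factor of $\sqrt{\vol(M)}$. Your $h_2$ step overclaims in the opposite direction: Buser's inequality (in the bounded-geometry form $\lambda_1^0\lesssim h_2 + h_2^2$) gives $h_2(M)\gtrsim\min(\lambda_1^0,\sqrt{\lambda_1^0})$, which under the uniform upper bound on $\lambda_1^0$ coming from 1-bounded geometry simplifies to $h_2(M)\gtrsim\lambda_1^0$, not $\sqrt{\lambda_1^0}$; the latter fails when $\lambda_1^0<1$, which is the interesting regime. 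The two errors have magnitudes $\sqrt{\vol(M)}$ and $\sqrt{1/\lambda_1^0}$, whose product is precisely the factor $\sqrt{\vol(M)/\lambda_1^0}$ that you then discard in your absorption step — this is why you coincidentally land on the corollary's formula — but the intermediate bound $|H_1(M,\Z)_{tors}|\gtrsim\bigl(\tfrac{1}{\diam M}\sqrt{\lambda_1^0\lambda_1^1}\,\vol(M)\bigr)^k$ you derive along the way is not true. Once you correct both conversions to $h_1\gtrsim\sqrt{\lambda_1^1/\vol M}$ and $h_2\gtrsim\lambda_1^0$, the product $h_0h_1h_2\vol(M)$ simplifies directly to $\tfrac{1}{\diam M}\lambda_1^0\sqrt{\lambda_1^1}\sqrt{\vol M}$, and the final absorption step becomes unnecessary.
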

This result was motivated by the work of Bergeron--Venkatesh on torsion homology growth in towers of covers. A more detailed comparison with their results is in \cref{subsec:comparebv}. The proof of \cref{thm:superpolytorsion} combines ideas coming from Gromov's topological overlap theorem with bounds on the diameter of the universal abelian cover of $M$.\\

Gromov discovered that high dimensional expansion is related to topological overlap phenomena \cite{gromov_singularities_2010, dotterrer_expansion_2018}. The manifolds we construct give new examples of such phenomena. The \emph{width} of a Riemannian $n$-manifold over $\R^k$ is $\inf_f \sup_{p\in \R^k} |f^{-1}(p)|$, where $f$ ranges over all piecewise smooth maps from the manifold to $\R^k$ and $|f^{-1}(p)|$ is the $n-k$ dimensional volume of the fiber over $p$.

Gromov showed that the width of an $n$-manifold over $\R^n$ is bounded above by a constant independent of $n$ \cite[Section 2.9f]{gromov_singularities_2010}. For example, every 3-manifold can be expressed as degree 3 ramified cover of $S^3$. Guth constructed examples of $n$-manifolds with large width over $\R^k$ for $k<n/2$; in this regime, one can embed a very highly connected $k$-complex into an $n$-manifold. Guth asked whether there exist any examples of $n$-manifolds with large width (say $\gtrsim \vol(M)^{1-\varepsilon}$) over $\R^k$ for some $k\geq n/2$; \cref{thm:3} gives the first such examples:
\begin{theorem}\label{thm:3}
    There is a sequence of Riemannian 3-manifolds $M_i$ with bounded geometry and volume going to infinity such that for any map $f: M_i \to \R^2$, there is a point in $\R^2$ with preimage of total length $\gtrsim \vol(M_i)^{1-\varepsilon}$. In other words, the widths of these manifolds over $\R^2$ are $\gtrsim \vol(M_i)^{1-\varepsilon}$.
\end{theorem}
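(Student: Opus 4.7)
\emph{Proof plan.} The strategy is to apply Gromov's topological overlap theorem to a 2-dimensional expander sub-complex of $M_i$ and then translate the resulting overlap bound into a length bound on fibers. We take $M_i$ to be the 3-manifolds produced by the construction underlying \cref{thm:0}. As emphasized in the introduction, these are built from 2-complexes $X_i$ which are simultaneously good expanders and good coexpanders, with 0- and 1-dimensional coboundary expansion constants at worst $\polylog(\vol(M_i))$. First I would realize $X_i$ inside $M_i$ as (part of) the 2-skeleton of a bounded-geometry triangulation, so that the number of 2-simplices of $X_i$ is comparable to $\vol(M_i)$.

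Given any piecewise smooth $f:M_i\to\R^2$, restrict to $f|_{X_i}$ and apply the topological overlap theorem for 2-complexes with good 1-dimensional coboundary expansion, in the quantitative form developed in \cite{gromov_singularities_2010, dotterrer_expansion_2018}. This yields a point $p\in\R^2$ for which a fraction $c\gtrsim 1/\polylog(\vol(M_i))$ of the 2-simplices $\sigma$ of $X_i$ have $(f|_\sigma)^{-1}(p)$ nonempty. For generic $p$, the fiber $\gamma:=f^{-1}(p)\subset M_i$ is a 1-submanifold that meets each of these 2-simplices transversally. A bounded-geometry accounting then completes the proof: a curve in $M_i$ cannot meet $N$ distinct 2-simplices of a bounded-geometry triangulation without traversing $\gtrsim N$ incident 3-simplices and therefore spending $\gtrsim N$ length, so $\mathrm{length}(\gamma)\gtrsim c\cdot\vol(M_i)\gtrsim \vol(M_i)/\polylog(\vol(M_i))\geq \vol(M_i)^{1-\varepsilon}$.

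The principal obstacle is ensuring that the overlap constant $c$ depends only polynomially on the coboundary expansion constants of $X_i$, so that a $\polylog$ loss in expansion costs at most a $\polylog$ loss in the overlap fraction; effective forms in the literature come close to what is required but may need sharpening for the specific expansion regime of $X_i$. A secondary concern is verifying that $X_i$ embeds in $M_i$ with enough geometric control that neighboring 2-simplices of $X_i$ are well-separated inside $M_i$, so the bounded-geometry length count is not spoiled by clustering; this should follow from the bounded-geometry hypothesis built into the construction underlying \cref{thm:0}.
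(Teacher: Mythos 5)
The proposal diverges fundamentally from the paper's argument, and the divergence is fatal.

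The paper proves \cref{thm:3} via \cref{thm:width}: for a rational homology 3-sphere $M$, any $f:M\to\R^2$ has a fiber of length $\gtrsim h_1(M)h_2(M)\vol(M)$. The argument there is a Gromov-style contraction run \emph{directly on the Riemannian manifold} $M$: triangulate $\R^2$, inductively fill $f^{-1}(v)$, then $H\partial e + f^{-1}(e)$, etc., using the metric Cheeger constants $h_1(M)$ and $h_2(M)$ and the integer/rational filling lemmas (\cref{lem:ratint}, \cref{lem:ratint2}). No simplicial expander 2-complex ever needs to sit inside $M$.

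Your plan is instead to locate the coexpander 2-complex $X_i$ \emph{inside} $M_i$ as a sub-complex of a bounded-geometry triangulation, and then invoke the topological overlap theorem for $f|_{X_i}:X_i\to\R^2$. This is exactly Guth's strategy in the regime $k<n/2$, and the obstruction to pushing it to $k\geq n/2$ is precisely what \cref{thm:3} is meant to circumvent. Concretely, the 2-complexes used in \cref{thm:0} (2-skeleta of high-dimensional hypercubes, or quotients of Bruhat--Tits buildings) do not embed in any 3-manifold: the link of a vertex in the 2-skeleton of the $k$-cube is the complete graph $K_k$, which is non-planar for $k\geq 5$, so $X_i$ cannot be realized as a sub-complex of a triangulated 3-manifold. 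More generally, bounded-degree coboundary 2-expanders have non-planar vertex links, so "realize $X_i$ inside $M_i$ as part of the 2-skeleton" is impossible. The construction in \cref{sec:promotecomplexes} does \emph{not} embed $X_i$ in $\M(X_i,q)$; it builds a manifold whose Cheeger constants mirror those of $X_i$ via Dehn surgery on a link in $\#_k S^1\times S^2$, but the 2-cells of $X_i$ correspond to surgery curves, not to embedded 2-cells.

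A secondary issue is the coefficient mismatch: the quantitative topological overlap theorems you cite require $\Z/2$ coboundary expansion, whereas the paper's construction controls real/rational filling norms; these are not automatically comparable. The paper instead works with rational Cheeger constants throughout and uses \cref{lem:ratint} and \cref{lem:ratint2} to upgrade rational fillings to integer ones exactly when the topological argument needs them (when detecting a multiple of $[M]$). Without that step, the contraction argument would only contradict the nonvanishing of $H_3(M,\Q)$, which is fine, but the student's route to the analogous statement on the 2-complex side is not laid out and would need its own treatment.

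In short: the intended mechanism (Gromov overlap applied to an expanding 2-complex embedded in $M_i$) is blocked at the very first step because no such embedding exists. You need to run the contraction argument on the manifold itself, using the manifold's rational Cheeger constants, as in the paper's \cref{thm:width}.
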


It remains an intriguing open question to decide whether the manifolds $M_i$ could be taken to all be homeomorphic to a fixed manifold, say $S^3$. See \cite[Naive conjecture 4]{guth_metaphors_2010} for more background on this question and its relationship with other questions in systolic and metric geometry.

\begin{subsection}{Torsion homology growth and spectral expansion}\label{subsec:comparebv}
Motivated by number theoretic considerations, Bergeron and Venkatesh made the following conjecture: 

\begin{conjecture}[\cite{bergeron_asymptotic_2013}]\label{conj:bv}
Suppose that $M_0 \xleftarrow{} M_1 \xleftarrow{} \dots$ is tower of covers of congruence arithmetic hyperbolic 3-manifolds, of injectivity radius approaching $\infty$. Then $|\Ht{M_i}|$ must grow exponentially in $vol(M_i)$, and in fact
$$\lim_{i\to \infty} \frac {\log{|\Ht{M_i}|}}{\vol{M_i}} = \frac 1 {6\pi}.$$
\end{conjecture}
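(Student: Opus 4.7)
My plan is to follow the analytic strategy articulated by Bergeron and Venkatesh themselves in \cite{bergeron_asymptotic_2013}. The Cheeger--Müller theorem identifies the Reidemeister torsion of $M_i$ (with trivial real coefficients) with the Ray--Singer analytic torsion $\tau_{RS}(M_i)$. For a closed orientable $3$-manifold the Reidemeister torsion packages together $|\Ht{M_i}|$ and a regulator built from $H_1(M_i,\R)$. The conjecture therefore splits into (a) computing $\log \tau_{RS}(M_i)$ asymptotically, and (b) showing that the regulator and $b_1(M_i)$ contribute only lower-order terms along the tower. Step (b) is handled by Borel's vanishing theorem together with Calegari--Emerton style bounds on $b_1$ along congruence towers, so the burden of proof lies in (a).

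\textbf{Analytic torsion and the constant $1/(6\pi)$.} For (a) I would write
\[
\log \tau_{RS}(M_i) = \tfrac{1}{2}\sum_{k} (-1)^{k+1} k\, \zeta_k'(0),
\]
where $\zeta_k(s)$ is the spectral zeta function of the Hodge Laplacian on $k$-forms. Using Mellin transforms of heat traces and the Plancherel formula on $\mathbb{H}^3$, the contribution of the tempered part of the spectrum becomes $\vol(M_i)$ times the $L^2$-analytic torsion density of $\mathbb{H}^3$; a direct calculation of this density yields exactly $1/(6\pi)$. The hypothesis $\inj(M_i) \to \infty$ gives Lück approximation and controls the short-time heat trace expansion uniformly along the tower. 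At this point, modulo the small-spectrum contribution, one already has
\[
\log \tau_{RS}(M_i) = \tfrac{1}{6\pi}\vol(M_i) + o(\vol(M_i)).
\]

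\textbf{The main obstacle: small eigenvalues on $1$-forms.} The step I expect to be the hardest is ruling out a large contribution from eigenvalues of the Hodge Laplacian on coexact $1$-forms that are close to zero, namely showing
\[
\sum_{0 < \lambda_j^1 \leq 1} \log \lambda_j^1 \;=\; o(\vol(M_i)).
\]
Heuristically, $0$-form small eigenvalues are killed by property $(\tau)$ for congruence arithmetic lattices, but for $1$-forms there is no such black box: small coexact $1$-form eigenvalues correspond to non-tempered or exceptional automorphic representations of $\mathrm{PSL}_2(\mathbb{C})$, and one needs a \emph{quantitative} limit-multiplicity bound (in the spirit of de George--Wallach, Sauvageot, Savin) saying that these representations occur with density $o(1)$ as $i \to \infty$. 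Cheeger--Buser inequalities of the type developed in \cref{thm:cheeger} would be a useful tool for converting any geometric input (e.g.\ uniform $h_1$ estimates of the sort implicit in the arithmeticity hypothesis) into the needed spectral control. This is precisely the step where the arithmetic congruence assumption must be used in an essential way, and obtaining an unconditional such estimate remains the principal analytic difficulty keeping the full conjecture open.
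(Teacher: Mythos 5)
This statement is a \emph{conjecture}, not a theorem: the paper cites it from Bergeron--Venkatesh and does not prove it (nor claim to). So there is no proof in the paper to compare against. Your ``proof proposal'' is, by your own admission in its final sentence, not a proof: you explicitly flag that the estimate
\[
\sum_{0 < \lambda_j^1 \leq 1} \log \lambda_j^1 = o(\vol(M_i))
\]
remains open, and without it the argument does not close. What you have written is a correct outline of the Cheeger--M\"uller / analytic-torsion strategy that Bergeron--Venkatesh themselves discuss, and you correctly identify the bottleneck; indeed your three ingredients (control of the Betti number term, control of the regulator, and the ``few small eigenvalues'' bound on $1$-forms) line up almost exactly with the three hypotheses of \cref{thm:bsv}, so you have effectively reduced the conjecture to the Bergeron--\c Seng\"un--Venkatesh theorem. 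But, as the paper notes in \cref{subsec:comparebv}, no tower is currently known to satisfy the ``few small eigenvalues'' hypothesis, and property $(\tau)$ does not help for $1$-forms (and the injectivity radius going to infinity actually forces the $1$-form spectral gap to go to zero, so the issue is not the gap but the density of the near-zero spectrum). Appealing to \cref{thm:cheeger} cannot rescue this: a Cheeger--Buser inequality controls the first nonzero eigenvalue, not a weighted count of all eigenvalues near zero, which is the quantity appearing in the hypothesis. So the proposal is an accurate survey of the state of the art, not a proof, and should not be labeled as one.
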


Similar conjectures were made independently by Lê and Lück \cite{le_growth_2018, luck_approximating_2013}. For a geometer, the obvious question is: what are the geometric features of arithmetic hyperbolic 3-manifolds that force large torsion homology? Bergeron--Şengün--Venkatesh proved the following theorem which has only geometric hypotheses:

\begin{theorem}[\cite{bergeron_torsion_2016}]\label{thm:bsv}
Let $M_0 \xleftarrow{} M_1 \xleftarrow{} \dots$ be a tower of closed hyperbolic 3-manifolds with injectivity radius going to $\infty$. Assume the following
conditions are satisfied:
\begin{enumerate}
    \item  ‘Few small eigenvalues’: For every $\varepsilon > 0$ there exists some positive real number $c$
such that
$$\limsup_{i\to \infty} \frac 1 {\vol(M_i)} \sum_{0 < \lambda < c} |\log \lambda| \leq \varepsilon$$
Here $\lambda$ ranges over eigenvalues of the Hodge Laplacian acting on all differential forms on $M_i$.

\item ‘Small Betti numbers’: $$b_1(M_i,\mathbb Q) = o\left(\frac{\vol(M_i)}{\log\vol(M_i)}\right)$$

\item ‘Low cycle complexity’: There is a constant $C$ such that for each $M_i$, there exists a basis of immersed surfaces $S_i$ spanning $H_2(M_i, \R)$ each of genus $< \vol(M_i)^C$.
\end{enumerate}
Then $$\lim_{i\to \infty} \frac {\log{|H_1(M_i, \Z)_{tors}|}}{\vol{M_i}} = \frac 1 {6\pi}.$$

\end{theorem}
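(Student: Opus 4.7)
The plan is to route the calculation through Ray--Singer analytic torsion and invoke the Cheeger--M\"uller theorem. Let $T(M)$ denote the analytic torsion, so
\[\log T(M) = \tfrac{1}{2} \sum_{i=0}^{3} (-1)^i\, i \log \det{}'(\Delta_i),\]
with $\Delta_i$ the Hodge Laplacian on $i$-forms. For closed hyperbolic 3-manifolds the Selberg trace formula expresses $\log T(M)$ as a piece linear in $\vol(M)$, with leading coefficient $\tfrac{1}{6\pi}$, plus contributions from closed geodesics and from small Laplace eigenvalues. Since $\inj(M_i) \to \infty$, the geodesic contribution is negligible after dividing by volume, and Hypothesis (1) is exactly what is needed to make the small-eigenvalue contribution $o(\vol(M_i))$. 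Hence $\log T(M_i)/\vol(M_i) \to \tfrac{1}{6\pi}$.

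Next I would apply the Cheeger--M\"uller theorem, which identifies $T(M)$ with the Reidemeister torsion $\tau(M)$ up to a regulator $R(M)$ that compares the $L^2$ metric on harmonic representatives of $H^*(M,\R)$ with the integer lattice $H^*(M,\Z)$. If $M$ were a rational homology 3-sphere the regulator would be trivial and $\tau(M) = |\Ht M|$, and the result would already follow from the preceding step. In general, taking logarithms and using $|H_0|=|H_3|=1$ gives
\[\log |\Ht{M}| = \log T(M) + \log R(M) + O(1),\]
so the remaining task is to show $\log R(M_i) = o(\vol(M_i))$.

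For the regulator bound, Hypothesis (2) limits the rank of the relevant lattice to $o(\vol(M_i)/\log\vol(M_i))$, so it suffices to bound each covolume by $\exp(o(\vol/b_1))$, i.e.\ by a polynomial in $\vol$ per generator. Hypothesis (3) furnishes a basis of $H_2(M_i,\R)$ by surfaces $S$ of genus at most $\vol(M_i)^C$; the harmonic $2$-form $\omega_S$ Poincar\'e-dual to $S$ has $L^2$ norm bounded polynomially in $\vol$, because $\omega_S$ is the Hodge projection of a smoothed current with total mass $\area(S)$, and the Hodge projection is bounded in $L^2$ once one uses bounded geometry. A matching lower bound on covolume comes from the observation that a nonzero integral harmonic class pairs nontrivially with an integer cycle of bounded geometric complexity, so its $L^2$ norm is at least $\poly(\vol)^{-1}$. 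Multiplying across the basis and using the rank bound gives $\log R(M_i) = o(\vol(M_i))$.

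The main obstacle is the regulator estimate: converting a geometric representative of a class (a low-genus surface) into an $L^2$ bound on its harmonic dual, and furnishing a matching lower bound, requires care with the spectral geometry near zero. Hypothesis (1) provides precisely the spectral information needed to bound the Hodge projection and to keep the smallest nonzero eigenvalues from blowing up the comparison; this is why the three hypotheses must be deployed in concert. Combining the analytic torsion asymptotic with $\log R(M_i) = o(\vol(M_i))$ yields $\log |\Ht{M_i}|/\vol(M_i) \to \tfrac{1}{6\pi}$.
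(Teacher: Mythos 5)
This theorem is stated as a citation to Bergeron--\c{S}eng\"un--Venkatesh and is not proved in the paper, so there is no in-paper argument to compare against; I'll assess your outline on its own terms. The overall route you sketch --- an analytic torsion asymptotic $\log T(M_i)/\vol(M_i) \to 1/6\pi$ from the trace formula together with hypothesis (1), then Cheeger--M\"uller to pass to Reidemeister torsion, then regulator bounds from hypotheses (2) and (3) --- is indeed the strategy of the cited paper, and the division of labor among the hypotheses is essentially the right one.

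The regulator step, however, is wrong as you've written it. You claim $\|\omega_S\|_{L^2}$ is controlled because ``$\omega_S$ is the Hodge projection of a smoothed current with total mass $\area(S)$, and the Hodge projection is bounded in $L^2$.'' The Hodge projection is an $L^2$-orthogonal projection, so it is bounded $L^2 \to L^2$ with norm $1$, but it is \emph{not} bounded from mass (or $L^1$) to $L^2$: the $L^2$ norm of a smoothing of the current $[S]$ depends on the smoothing width and on the self-intersection multiplicity of the immersed surface, and is not controlled by $\area(S)$ alone. The argument that actually works (and is the one BSV use) exploits the pointwise elliptic estimate for harmonic forms under bounded geometry, $\|\omega\|_\infty \lesssim \|\omega\|_{L^2}$: writing $\|\omega_S\|_{L^2}^2 = \int_M \omega_S \wedge *\omega_S = \int_S *\omega_S \leq \area(S)\,\|\omega_S\|_\infty$ yields $\|\omega_S\|_{L^2} \lesssim \area(S) \lesssim \vol(M)^C$ with no spectral-gap input at all. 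For the same reason, your closing claim that hypothesis (1) is needed to ``bound the Hodge projection'' in the regulator estimate is off: (1) is consumed entirely by the analytic torsion asymptotic, while the regulator bound needs only bounded geometry together with (2) and (3). Finally, you assert a matching lower bound on the covolume but do not say how it follows from a lower bound on the shortest vector; that step requires Minkowski's theorem and, crucially, the rank bound from (2), since the resulting error is $\sim b_1(M_i)\log\vol(M_i)$ and is $o(\vol(M_i))$ only because of hypothesis (2).
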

This theorem is morally very similar to our \Cref{cor:spectralrestatement}; they both say that a sequence of manifolds which are good expanders in all dimensions must have large torsion homology. Nonetheless, the two theorems are proved in rather different ways. Bergeron--Şengün--Venkatesh require their sequence of manifolds to Benjamini--Schramm converge to $\mathbb H^3$, and deduce the abundance of torsion from the non-vanishing of the $L^2$ analytic torsion of $\mathbb H^3$. In \Cref{thm:superpolytorsion}, we impose no hypothesis on the injectivity radius of our manifolds, nor even require that they be hyperbolic. We do conclude a weaker result, superpolynomial torsion growth instead of exponential. Regardless, this shows that expansion alone is sufficient to force large torsion homology. On the other hand, large injectivity radius alone may not be sufficient to force large torsion homology. Brock and Dunfield constructed a sequence of hyperbolic integer homology spheres which Benjamini--Schramm converges to $\mathbb H^3$ \cite{brock_injectivity_2015}. 

We currently don't know any tower of covers which can be proved to satisfy the ``few small eigenvalues'' condition of \Cref{thm:bsv}. As injectivity radius approaches $\infty$, the spectral gap of the 1-form Laplacian necessarily goes to zero. Therefore, verifying the ``few small eigenvalues'' condition requires understanding not only the spectral gap, but the next smallest eigenvalues as well. On the other hand, it is more tractable to control $h_1(M_i)$, and \cref{thm:0} gives examples of manifolds for which \Cref{thm:superpolytorsion} applies.

\begin{comment}
One might ask whether \cref{thm:superpolytorsion} has something to say for hyperbolic 3-manifolds with $\inj(M)\to \infty$. This is indeed possible, but $h_1(M)$ must lie in a relatively narrow band of values. When $M$ is a hyperbolic rational homology 3-sphere, $\inj(M)\to \infty$ implies that $h_1(M)\to 0$. To see this, let $\gamma$ be the shortest geodesic in $M$. Then $\gamma$ has a radius $\inj(M)$ embedded tubular neighbourhood. Any filling of $\gamma$ has area $|\gamma|\exp(\inj(M))$, so $h_1(M)\leq \exp(-\inj(M))$. On the other hand, the volume of the tube is $|\gamma|\exp(2\inj(M))=2\inj(M)\exp(2\inj(M))$. 
\end{comment}

\end{subsection}

\begin{subsection}{Organization of the paper}
    \cref{sec:cheeger} is an introduction to Cheeger constants. \Cref{sec:comparison} is devoted to proving a comparison theorem between simplicial and de Rham Cheeger constants. This allows us to give a simple proof of a Cheeger-Buser for $i$-forms. We also prove \Cref{cor:spectralrestatement}. \Cref{sec:dehn} is a warmup for \cref{sec:promotecomplexes} where we explain the construction of the examples in \cref{thm:0}. Finally, in \cref{sec:topoverlap} we explain the proofs of \cref{thm:superpolytorsion} and \cref{thm:3}. This section can essentially be read independently of Sections \ref{sec:comparison}-\ref{sec:promotecomplexes}. In \cref{sec:questions}, we conclude with some questions.
\end{subsection}

\section*{Acknowledgements}
I would like to thank Elia Portnoy and Larry Guth for teaching me about expanders, sharing many interesting questions about them, and giving valuable feedback on drafts of this paper. I'd also like to thank Tom Mrowka, Vikram Giri, Amina Abdurrahman, Anshul Adve, and Ben Lowe for many stimulating conversations around this circle of ideas.

\end{section}
\section{Cheeger constants}\label{sec:cheeger}

Let $X$ be either a finite simplicial complex or a closed Riemannian manifold. A cochain of $X$ is either a simplicial cochain in the case of a simplicial complex or a de Rham cochain in the case of a Riemannian manifold. A chain in $X$ is a simplicial chain in the simplicial case or a smooth current in the case of a manifold. Let $C_i(X,R)$ denote the space of $i$-chains in $X$ with coefficients in $R$. We always augment our chain complex including $C_{-1}(X,R) \cong R$ so that $H_0(X,R)=0$ whenever $X$ is connected. Let $C^i(X,R)$ denote the space of $i$-cochains. 

When $X$ is a finite simplicial complex, we use $\langle -,-\rangle$ to denote the inner product on $C_i$ or $C^i$ coming from the preferred basis of $i$-simplices. When $X$ is a Riemannian manifold, $\langle -, - \rangle$ refers to the Riemannian inner product. In a manifold, $\norm{\cdot}_p$ refers to the $L^p$ norm on currents or forms with respect to the Riemannian metric. In a simplicial complex, it refers to the $L^p$ norm in the standard basis where all simplices have norm 1.

Let $\partial: C_i \to C_{i-1}$ be the boundary map and let $d: C_i \to C_{i+1}$ be the adjoint of $\partial$ with respect to $\langle -,- \rangle$. Abusing notation, we also use $d$ to denote the coboundary map $d: C^i \to C^{i+1}$. Given a chain $\alpha$, we say that $\beta$ is a \emph{filling} of $\alpha$ if $\partial \beta=\alpha$. Given a cochain $\alpha$, we say that $\beta$ is a \emph{cofilling} of $\alpha$ if $d \beta=\alpha$. The Cheeger constants of $X$ measure the difficulty of finding fillings or cofillings of small norm.

\begin{definition} For a choice of norm $\norm{\cdot}$ (typically a mass norm or $L^2$) and coefficient ring $R$ (typically $\Z/2$, $\Z$, $\Q$, or $\R$), the $i^{th}$ Cheeger constant of $X$ is defined as

$$h_i(X,\norm{\cdot},R) = \inf_{\substack{\alpha \in C_{i}(X,R)\\ \partial \alpha =0 \\ \alpha \neq 0}} \sup_{\substack{\beta \in C_{i+1}(X,R)\\ \partial \beta = \alpha}} \frac{\norm{\alpha}}{\norm \beta}$$

In other words, $h_i$ is large when every $i$-cycle $\alpha$ has filling $\beta$ of small norm compared to $\alpha$. If $H_i(M,R)\neq 0$, then $h_i(M,\norm{\cdot},R)=0$. When treating spaces with nonvanishing homology, it is useful to define the variant $h_{i,\exact}(X,\norm{\cdot},R)$ which has the same definition as $h_{i}(X,\norm{\cdot},R)$, but the infimum runs instead over all exact $\alpha$.

We also define the corresponding notion for cochains:

$$h^i(X,\norm{\cdot},R) = \inf_{\substack{\alpha \in C^{i}(X,R)\\ d \alpha =0 \\ \alpha \neq 0}} \sup_{\substack{\beta \in C^{i-1}(X,R)\\ d \beta = \alpha}} \frac{\norm{\alpha}}{\norm \beta}$$
Again, we define the analogous quantity $h^i_{\coexact}$, where the infimum runs instead over all coexact $\alpha$.
\end{definition}

The Cheeger constants are sensitive to both the choice of coefficient ring and norm. The \emph{comass norm} on differential $i$-forms is defined as $$\norm{\omega}_\comass = \sup_\xi \omega(\xi)$$ where $\xi$ runs over all simple tangent $i$-vectors of norm 1. The \emph{mass norm} on $i$-currents is defined as $$\norm{x}_{\mass} = \sup \{ \omega(x) \mid \omega \in C^i(X,\R), \norm{\omega}_\comass \leq 1\}.$$ The mass norm generalizes the Riemannian $i$-dimensional volume to currents. The mass norm is comparable to the $L^1$ norm up to a constant depending only on the dimension of $X$. In a simplicial complex, we simply define $\norm{\cdot}_\mass = \norm{\cdot}_1$. Throughout this paper, we will usually work with coefficients in $\R$ and $\norm{\cdot}=\norm{\cdot}_\mass$ or $\norm{\cdot}=\norm{\cdot}_2$.  When left unspecified, we always mean to take the mass norm and $\R$ coefficients; for example, we write $h_i(X)$ instead of $h_i(X,\norm\cdot_\mass,\R)$. 

Some of these Cheeger constants have more familiar interpretations.

\begin{itemize}
\item When $X$ is connected, $h_0(X,\norm{\cdot}_\mass, R)=2/\diam(X)$. Here, it was important that we augmented our chain complex so that every 0-cycle has a filling.
\item $h^1_{\coexact}(X,\norm{\cdot}_\mass,\Z)$ is the usual Cheeger constant.
\item $h_0(X,\norm{\cdot}_2,\R)=h^1_{\coexact}(X,\norm{\cdot}_2,\R)$ is the square root of the spectral gap of the Laplacian acting on functions.
\item When $X$ is an oriented $n$-manifold, Poincar\'e duality implies that $h_i(X,\norm\cdot,R) = h^{n-i}(X,\norm\cdot,R)$
\item $h_1(M,\norm{\cdot}_\mass,\Q)$ is known as the stable isoperimetric ratio for curves
\end{itemize}

A note is warranted about rational coefficients. As we have set things up, $C_i(X,\Q)$ does not make sense when $X$ is a manifold because smooth currents must have real coefficients. We will work instead with $C_{i,\sing}(X,\Q)$, the space of polyhedral singular chains with coefficients in $\Q$. We denote the resulting Cheeger constant $h_{i,\sing}(X,\norm{\cdot}_\mass,\Q)$. Working with polyhedral chains gives the same Cheeger constant:

\begin{prop}\label{prop:realrational}
Let $X$ be a Riemannian manifold. Then
\begin{equation*}
    h_{i,\sing}(X,\norm{\cdot}_\mass,\Q) = h_{i,\sing}(X,\norm{\cdot}_\mass, \R) = h_i(X,\norm{\cdot}_\mass,\R).
\end{equation*}
\end{prop}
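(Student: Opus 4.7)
The plan is to prove the two equalities separately: the rational/real equality by a finite-dimensional linear programming argument, and the polyhedral/smooth equality via geometric measure theory.

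For $h_{i,\sing}(X,\norm{\cdot}_\mass,\Q) = h_{i,\sing}(X,\norm{\cdot}_\mass,\R)$: given any polyhedral chain $\alpha$ and any candidate filling $\beta$, both are supported on finitely many polyhedra; after refining to a common simplicial subdivision the filling problem becomes a finite-dimensional linear program, namely minimize $\norm\beta_1$ subject to $\partial\beta = \alpha$, where $\partial$ is represented by an integer matrix. A standard vertex argument for $\ell^1$ optimization shows that when $\alpha$ has rational coefficients there is an optimal filling with rational coefficients and identical mass, so the minimum-filling-mass function $F$ satisfies $F_\Q(\alpha)=F_\R(\alpha)$ on rational cycles. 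For the converse direction, I approximate an arbitrary real polyhedral cycle by rational perturbations of its coefficients and use upper semicontinuity of $F_\R$ under such perturbations, so the infimum filling ratio over rational cycles matches that over real cycles.

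For $h_{i,\sing}(X,\norm{\cdot}_\mass,\R) = h_i(X,\norm{\cdot}_\mass,\R)$: I use the Federer--Fleming deformation theorem to approximate any smooth current of finite mass by a polyhedral chain with close mass and close boundary, and conversely smooth any polyhedral chain by convolution with a small bump form to obtain a smooth current of nearly equal mass and compatibly smoothed boundary. Applied to a near-optimal pair $(\alpha,\beta)$ in one setting, these approximations produce a pair in the other setting whose filling ratio has been multiplied by only $1+o(1)$, giving equality of the two Cheeger constants.

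The main obstacle is the second equality, and specifically the need to preserve the exact boundary relation $\partial\beta=\alpha$ rather than only an approximate one after applying deformation or smoothing. The standard fix is to correct the boundary discrepancy by attaching an auxiliary small filling whose mass is controlled by the isoperimetric bounds built into the deformation theorem, which contributes only $o(1)$ to the total filling mass. This bookkeeping is the most delicate step, but once it is set up the inequalities run in both directions.
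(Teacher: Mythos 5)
Your outline tracks the paper's own proof closely: both arguments split the statement into a rational-versus-real comparison handled by finite-dimensional linear algebra, and a polyhedral-versus-smooth comparison handled by mollification together with the Federer--Fleming deformation theorem (which is exactly \cref{thm:fedapprox}). The LP vertex argument you give is more machinery than needed, though it does work, provided one notes that the mass-norm objective on a simplicial subcomplex has coefficients given by the (possibly irrational) Riemannian volumes of the simplices; this is harmless because the vertices of the feasible polytope are determined by the integral boundary equations $\partial\beta=\alpha$, not by the objective. The paper takes the simpler route: since the boundary map on the finite subcomplex spanned by $\alpha$ and a given filling $\beta$ is integral, the affine solution set $\{\beta':\partial\beta'=\alpha\}$ is defined over $\Q$, so rational solutions are dense in it and one just picks one close to $\beta$.

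There is one genuine error of direction. For $h_{i,\sing}(X,\Q)\le h_{i,\sing}(X,\R)$ you invoke ``upper semicontinuity of $F_\R$,'' but the property you actually need is \emph{lower} semicontinuity: after perturbing a nearly extremal real cycle $\alpha$ to a nearby rational cycle $\alpha'$ (and note that $\alpha'$ must again be a cycle, which holds because the cycle condition is cut out by integral equations), you need $F_\R(\alpha')$ to be \emph{not much smaller} than $F_\R(\alpha)$ so the filling ratio does not drop. Upper semicontinuity bounds $F_\R(\alpha')$ from above, which is the wrong inequality. The correct justification is that $F_\R$ is Lipschitz with constant $1/h_i(X,\R)$ on the space of cycles (a filling of $\alpha$ plus a small filling of $\alpha-\alpha'$ yields a filling of $\alpha'$), and this in turn requires first disposing of the trivial case $H_i(X,\R)\ne 0$, in which all three quantities vanish; your proposal omits this case entirely.

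On the polyhedral/smooth side, the bookkeeping concern you raise is real, but the fix you sketch---patching a boundary discrepancy using ``isoperimetric bounds built into the deformation theorem''---is not what the paper does and would need its own justification. The version of Federer--Fleming the paper quotes already permits taking $\partial T'=\partial T$ when $\partial T$ is polyhedral, so no discrepancy remains on that side. The only correction current needed is the $\beta_0$ produced by the mollification homotopy, whose small mass comes from the explicit homotopy, not an isoperimetric estimate. With the semicontinuity direction fixed, the homology case added, and the boundary preservation routed through the exact-boundary variant of \cref{thm:fedapprox}, your plan reproduces the paper's argument.
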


To see this, we will need to compare polyhedral chains to smooth currents. In one direction, we can always mollify polyhedral chains to get smooth currents. The reverse direction is provided by the following consequence of Federer and Fleming's deformation theorem:
\begin{theorem}[{\cite[Theorems 4.2.21 and 4.2.24]{federer_geometric_1996}}]\label{thm:fedapprox} Let $T$ be an
$i$-current with $\norm{T}_\mass + \norm{\partial T}_\mass<\infty$ and $\varepsilon > 0$. Then there exists a polyhedral $i$-chain $T'$ and an $i+1$-current $S$ such that $\norm{T - T' - \partial S}_\mass \leq \varepsilon$ and $\norm{S}_\mass < \varepsilon$, and $\norm{T'}_\mass + \norm{\partial T'}_\mass \leq \norm{T}_\mass +\norm{\partial T}_\mass + \varepsilon$. Moreover, if $\partial T$
is polyhedral it is possible to take $\partial T' = \partial T$.
\end{theorem}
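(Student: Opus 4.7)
The plan is to verify each equality separately: Theorem \ref{thm:fedapprox} handles the comparison between polyhedral chains and smooth currents, and a finite-dimensional density argument handles the rational versus real comparison. Throughout I restrict to exact cycles, since both sides of the definition vanish on non-exact classes.

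For $h_{i,\sing}(X,\norm{\cdot}_\mass,\R) = h_i(X,\norm{\cdot}_\mass,\R)$, the key step is to show that for any polyhedral cycle $\alpha$, the infimum of $\norm{\beta}_\mass$ over polyhedral fillings equals the infimum over all current fillings. This is immediate from the moreover clause of Theorem \ref{thm:fedapprox}, applied with $T$ a current filling of $\alpha$: since $\partial T = \alpha$ is polyhedral, the theorem produces a polyhedral $\beta$ with $\partial \beta = \alpha$ and $\norm{\beta}_\mass \leq \norm{T}_\mass + \eps$. Combined with the inclusion of polyhedral cycles into current cycles, this gives $h_{i,\sing}(X,\R) \geq h_i(X,\R)$. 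For the reverse inequality, given a current cycle $\alpha'$ and a near-optimal filling $\beta'$ with mass close to $F := \inf_\beta \norm{\beta}_\mass$, I apply Theorem \ref{thm:fedapprox} to $\beta'$, obtain a polyhedral $\beta$, and take $\alpha := \partial \beta$ as the replacement polyhedral cycle, automatically polyhedral and exact. The Fed-Fleming estimate yields a residual $(i+1)$-current $E$ with $\norm{E}_\mass \leq \eps$ and $\alpha' = \alpha + \partial E$. Hence any polyhedral filling $\beta''$ of $\alpha$ produces a filling $\beta'' + E$ of $\alpha'$ of mass at most $\norm{\beta''}_\mass + \eps$, so $\inf_{\text{fill of }\alpha}\norm{\cdot}_\mass \geq F - \eps$; pairing this lower bound with the sum bound $\norm{\beta}_\mass + \norm{\alpha}_\mass \leq \norm{\beta'}_\mass + \norm{\alpha'}_\mass + \eps$ yields the individual bound $\norm{\alpha}_\mass \leq \norm{\alpha'}_\mass + O(\eps)$. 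The ratio for $\alpha$ therefore converges to the ratio for $\alpha'$ as $\eps \to 0$, giving $h_{i,\sing}(X,\R) \leq h_i(X,\R)$.

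For $h_{i,\sing}(X,\norm{\cdot}_\mass,\Q) = h_{i,\sing}(X,\norm{\cdot}_\mass,\R)$, I fix a triangulation $T$ fine enough to support both a given cycle and a near-optimal filling, reducing the question to a finite-dimensional one. The cycle and boundary subspaces of polyhedral chains on $T$ are cut out by integer linear equations, so their rational points are dense, and the filling-mass function $v(\gamma) := \inf\norm{\beta}_\mass$ is a continuous seminorm in $\gamma$ (the optimal value of a linear program with data depending linearly on $\gamma$). For the $\geq$ direction, rational density of $\beta$'s together with continuity of $\norm{\cdot}_\mass$ gives $v_\Q(\alpha) = v_\R(\alpha)$ for rational cycles $\alpha$, and restricting the outer infimum from $\R$-cycles to $\Q$-cycles can only increase it. For the $\leq$ direction, approximating an exact polyhedral $\R$-cycle $\alpha$ by $\alpha_\Q$ in the rational boundary subspace, continuity of $v$ and of the mass norm makes the ratios converge.

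The main technical obstacle is the $\leq$ direction of the polyhedral-versus-currents equality: the quoted version of Fed-Fleming controls only the sum $\norm{T'}_\mass + \norm{\partial T'}_\mass$, not each term individually, so the desired bound $\norm{\alpha}_\mass \leq \norm{\alpha'}_\mass + O(\eps)$ must be extracted by pairing the sum bound with the lower bound on $\norm{\beta}_\mass$ coming from the filling-transport through $E$. Once this is in place, everything else reduces to a routine continuity or density argument.
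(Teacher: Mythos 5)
Your proposal does not prove the statement in question. The statement is the Federer--Fleming deformation/approximation theorem itself: given an $i$-current $T$ with $\norm{T}_\mass + \norm{\partial T}_\mass < \infty$, one must \emph{construct} a polyhedral chain $T'$ and a current $S$ with the stated mass bounds and the approximation property $\norm{T - T' - \partial S}_\mass \leq \varepsilon$. What you have written instead is a proof of the paper's \cref{prop:realrational} (the equality of the rational-polyhedral, real-polyhedral, and smooth-current Cheeger constants), and your argument explicitly invokes \cref{thm:fedapprox} as a black box (``This is immediate from the moreover clause of Theorem \ref{thm:fedapprox}\dots''). You cannot prove a theorem by assuming it and deriving downstream consequences; the entire content of the statement --- producing the polyhedral approximant in the first place --- is untouched.

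A genuine proof would require the deformation-theorem machinery: subdivide $\R^n$ (or the ambient manifold, via charts) into a cubical grid of side $\sim \varepsilon$, radially project the current onto successively lower-dimensional skeleta from generic center points in each cell, and use a Fubini/averaging argument over the choice of centers to show that some choice of projections increases mass by only a bounded factor; the homotopy of the radial projections supplies the current $S$, and the refinement $\norm{T'}_\mass + \norm{\partial T'}_\mass \leq \norm{T}_\mass + \norm{\partial T}_\mass + \varepsilon$ together with the ``moreover'' clause requires the sharper approximation arguments of Federer's Theorems 4.2.21 and 4.2.24. None of this appears in your proposal. For what it is worth, the paper itself does not prove this theorem either --- it cites it from Federer --- so the appropriate response here was either to reproduce (a sketch of) the deformation-theorem proof or to note explicitly that the statement is being quoted from the literature; presenting a proof of a different proposition that depends on it is not an acceptable substitute.
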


\begin{proof}[Proof of \cref{prop:realrational}]
If $H_i(X,\R)\neq 0$, then all three quantities vanish. Assume now that $H_i(X,\R)=0$. The inequality $$h_{i,\sing}(X,\norm{\cdot}_\mass,\Q) \leq h_{i,\sing}(X,\norm{\cdot}_\mass, \R)$$ follows from density of $C_{i,\sing}(X,\Q)$ in $C_{i,\sing}(X,\R)$ in the mass norm. To check the reverse inequality, it suffices to show that fillings of rational $i$-chains by real $i+1$-chains can be approximated by rational fillings of similar norm. Suppose $\alpha \in C_{i,\sing}(X,\Q)$ has a filling $\beta\in C_{i+1,\sing}(X,\R)$.  The finite dimensional subcomplex of $C_{*,\sing}(X,\R)$ spanned by all simplices appearing in $\beta$ has an integer boundary map. Therefore, the equation $\partial \beta'=\alpha$ has a rational solution as close as desired to $\beta$.

Let's argue $h_{i,\sing}(X,\norm{\cdot}_\mass,\R)\geq h_i(X,\norm{\cdot}_\mass,\R)$; the remaining direction is similar and we leave it to the reader. Suppose $\alpha$ is a cycle in $C_{i,\sing}(X,\R)$. We want to construct a filling for $\alpha$ of small norm. Mollify $\alpha$ to get a smooth $\alpha' \in C_i(X,\R)$ with $\norm{\alpha'}_\mass \leq \norm{\alpha}_\mass + \varepsilon$ and $\alpha-\alpha' = d\beta_0$ for some $i+1$-current $\beta_0$ satisfying $\norm{\beta_0}_\mass < \varepsilon$. 
 Next, find some $\beta' \in C_{i+1}(X,\R)$ with $\partial \beta' = \alpha'$. By the definition of the Cheeger constant, we can arrange that $\norm{\beta'}_\mass \leq \frac 1 {h_i(X,\norm{\cdot}_\mass,\R)}\norm{\alpha'}_\mass$. Then $\beta_0 + \beta'$ is an $i+1$-current filling $\alpha$. \cref{thm:fedapprox} gives an approximation of $\beta_0+\beta'$ in the mass norm by a polyhedral chain $\beta''$ which also satisfies $\partial \beta''= \alpha$ and $\norm{\beta''}_\mass \leq \norm{\beta_0 + \beta'}_\mass + \varepsilon$.
 \begin{align*}   
 \norm{\beta''}_\mass &\leq \norm{\beta'}_\mass + \norm{\beta_0}_\mass + \varepsilon\\
 &\leq \frac 1 {h_i(X,\norm{\cdot}_\mass,\R)}\norm{\alpha'}_\mass + 2\varepsilon\\
 &\leq \frac 1 {h_i(X,\norm{\cdot}_\mass,\R)}(\norm{\alpha}_\mass + \varepsilon) + 2\varepsilon\\
 &\leq \frac 1 {h_i(X,\norm{\cdot}_\mass,\R)}\norm{\alpha}_\mass + O(\varepsilon)
 \end{align*}

 Thus, any $\alpha$ has an efficient filling by a polyhedral $i+1$-chain and we have proven the desired inequality.
\end{proof}

We prefer to state our results in terms of $h_i(X,\norm{\cdot}_\mass,\R)$, but will make use of the the definition in terms of rational polyhedral chains in \cref{sec:topoverlap}. When we use $L^2$ norms, many of the different Cheeger constants become the same:

\begin{prop}\label{prop:l2easy1}
    Suppose $H_i(X,\R)=0$. Then $h_i(X,\norm{\cdot}_2,\R)$ is the square root of the smallest eigenvalue of the operator $\partial d$ acting on $i$-cycles. Similarly, when $H_i(X,\R)=0$, $h^{i}(X, \norm{\cdot}_2,\R)$ is the square root of the smallest eigenvalue of the operator $d \partial$ acting on $i$-cocycles.
\end{prop}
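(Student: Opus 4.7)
The plan is to reduce the infimum-supremum in the definition to an ordinary Rayleigh quotient for the positive semi-definite self-adjoint operator $\partial d$ restricted to the space of $i$-cycles.

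First, I would unpack the definition. For a fixed cycle $\alpha$, the supremum $\sup_\beta \tfrac{\norm{\alpha}_2}{\norm{\beta}_2}$ is achieved by the filling $\beta$ of minimum $L^2$ norm. Since the set of fillings of $\alpha$ is an affine subspace parallel to $\ker\partial$, this minimum-norm filling is the unique $\beta$ with $\partial\beta=\alpha$ and $\beta \in (\ker\partial)^\perp$. Using that $d$ is the adjoint of $\partial$, this orthogonal complement is the image of $d$, so we may write $\beta = d\gamma$ for some $i$-chain $\gamma$. Moreover, after subtracting off the projection of $\gamma$ to $\ker d$ (which does not change $d\gamma$), we may insist that $\gamma \in (\ker d)^\perp = \mathrm{image}(\partial) = B_i = Z_i$, the last equality using the hypothesis $H_i(X,\R)=0$.

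Next I would show that the operator $A := \partial d$, viewed as a map $Z_i \to Z_i$, is self-adjoint, positive semi-definite, and in fact has trivial kernel: if $\partial d \gamma = 0$ with $\gamma\in Z_i$, then $\langle d\gamma,d\gamma\rangle = \langle\partial d\gamma,\gamma\rangle = 0$, so $d\gamma = 0$; writing $\gamma = \partial \eta$ (since $H_i=0$) gives $\langle \gamma,\gamma\rangle = \langle \partial\eta,\gamma\rangle = \langle\eta,d\gamma\rangle = 0$. Hence $A|_{Z_i}$ is invertible, and the assignment $\gamma \mapsto (\alpha,\beta) = (A\gamma,d\gamma)$ is a bijection between $Z_i$ and (cycle, optimal-filling) pairs.

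Under this bijection, the ratio to be minimized becomes
\[
\frac{\norm{\alpha}_2^2}{\norm{\beta}_2^2} \;=\; \frac{\langle A\gamma,A\gamma\rangle}{\langle d\gamma,d\gamma\rangle} \;=\; \frac{\langle A\gamma,A\gamma\rangle}{\langle A\gamma,\gamma\rangle}.
\]
Diagonalizing $A$ via the spectral theorem, if $\gamma = \sum_k c_k e_k$ with $Ae_k = \lambda_k e_k$ and $\lambda_k>0$, this ratio equals $\sum_k \lambda_k^2 c_k^2 / \sum_k \lambda_k c_k^2$, which is a weighted average of the eigenvalues $\lambda_k$ and hence is minimized by the smallest one, attained when $\gamma$ is an eigenvector for $\lambda_{\min}$. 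Squaring out, $h_i(X,\norm{\cdot}_2,\R) = \sqrt{\lambda_{\min}}$. The statement for $h^i$ follows from the same argument after swapping the roles of $\partial$ and $d$.

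The only genuine subtlety is the Riemannian case, where $C_i$ is infinite-dimensional and we must work with $L^2$ currents/forms; here I would quote Hodge theory on a closed manifold to obtain the orthogonal decomposition $C_i = \ker d \oplus \mathrm{image}(\partial)$ and a compact resolvent for $\partial d$ on coexact forms (equivalently, on $Z_i$ under the identification given by the Hodge star), so that the spectral decomposition above is still valid and the infimum is attained. The simplicial case is purely finite-dimensional linear algebra.
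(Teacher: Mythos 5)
Your proof is correct and follows essentially the same route as the paper's: both identify the inner supremum with the minimum-norm filling lying in $(\ker\partial)^\perp = \mathrm{image}(d)$, and then recognize the resulting optimization as a spectral quantity for $\partial d$. The paper phrases this as "the variational definition of the smallest nonzero singular value of $d$," while you carry out the same reduction via the change of variables $\gamma\mapsto(\partial d\gamma,\,d\gamma)$ on $Z_i$ and a direct spectral-theorem computation showing the ratio $\langle A\gamma,A\gamma\rangle/\langle A\gamma,\gamma\rangle$ is a weighted average of the eigenvalues; your version is a bit more explicit (and, incidentally, avoids the paper's slightly loose claim that $d$ maps $i$-cycles isomorphically onto $(i+1)$-cocycles, which literally would require $H_{i+1}=0$), but the underlying idea is the same.
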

\begin{proof}
    When $H_i(X,\R)=0$, $d$ is an isomorphism between $i$-cycles and $i+1$-cocycles. Here, we are using the inner product to identify $i$-chains with $i$-cochains. This makes sense both for manifolds and for simplicial complexes. Our definition for $h_i(X,\norm{\cdot}_2,\R)$ is the variational definition for the smallest nonzero singular value of $d$. The eigenvalues of $\partial d$ are the squares of the singular values of $d$, and the result follows. The second statement follows the dual argument using that $\partial$ is an isomorphism between $i$-cocycles and $i-1$-cycles.
\end{proof}
\begin{corollary}\label{cor:fullspectrum}
    Suppose $H_i(X,\R)=0$. Then the square root of the spectral gap of the Hodge Laplacian $\Delta = \partial d + d\partial$ acting on $i$-chains (or equivalently on $i$-cochains) is $\min(h_i(X,\norm{\cdot}_2,\R),h^i(X,\norm{\cdot}_2,\R))$.
\end{corollary}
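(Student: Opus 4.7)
The plan is to diagonalize $\Delta$ with the Hodge decomposition and then read off each block from \cref{prop:l2easy1}. Under the hypothesis $H_i(X,\R)=0$, the standard identification of harmonic chains with real homology (valid in both the simplicial and smooth settings) gives $\ker \Delta = 0$ on $C_i$. The Hodge decomposition then reduces to
\begin{equation*}
    C_i = \operatorname{Im}(d) \oplus \operatorname{Im}(\partial),
\end{equation*}
where $d$ comes from $C_{i-1}$ and $\partial$ from $C_{i+1}$. Because $d^2=0$ and $\partial^2=0$, this coincides with the decomposition $C_i = \ker d \oplus \ker \partial$: the two summands are the $i$-cocycles and $i$-cycles respectively, and they are orthogonal because $(\ker \partial)^{\perp} = \operatorname{Im}(d)$ by the adjoint relation between $\partial$ and $d$.

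Next I would check that $\Delta$ preserves this splitting and simplifies on each piece. If $\alpha \in \ker\partial$ then $d\partial\alpha = 0$, so $\Delta\alpha = \partial d\alpha$; dually, on $\ker d$ we have $\Delta = d\partial$. Consequently the spectrum of $\Delta$ on $C_i$ is the disjoint union of the spectrum of $\partial d$ restricted to $i$-cycles and the spectrum of $d\partial$ restricted to $i$-cocycles, and both restricted operators have trivial kernel under our hypothesis (their kernels are precisely the harmonic part, which we have already shown vanishes).

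Finally I would apply \cref{prop:l2easy1}: the smallest eigenvalue of $\partial d$ on $i$-cycles is $h_i(X,\norm{\cdot}_2,\R)^2$, and the smallest eigenvalue of $d\partial$ on $i$-cocycles is $h^i(X,\norm{\cdot}_2,\R)^2$. Taking the minimum and extracting the square root gives the claimed value $\min(h_i(X,\norm{\cdot}_2,\R),h^i(X,\norm{\cdot}_2,\R))$ for the square root of the spectral gap of $\Delta$. There is no real obstacle here — the argument is essentially bookkeeping once the Hodge decomposition and the identification $\ker \Delta \cong H_i(X,\R)$ are in hand; the latter is classical in both the de Rham and simplicial settings.
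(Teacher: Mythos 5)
Your argument is correct and follows the same route as the paper: split the spectrum of $\Delta$ via the Hodge decomposition into the exact and coexact parts (which you correctly observe coincide with $\ker\partial$ and $\ker d$ once the harmonic space vanishes) and then read off each block from \cref{prop:l2easy1}. You simply spell out in more detail what the paper compresses into a one-line invocation of the Hodge theorem.
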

\begin{proof}
    The Hodge theorem gives a splitting of the spectrum of $\Delta$ into the spectrum of exact and coexact chains. By \cref{prop:l2easy1}, these spectral gaps are given by $h_i$ and $h^i$ respectively.
\end{proof}

\begin{corollary}\label{prop:l2easy0}
    If $H_i(X,\R)=H_{i+1}(X,\R)=0$, then $h_i(X,\norm{\cdot}_2,\R)=h^{i+1}(X,\norm{\cdot}_2,\R)$.
\end{corollary}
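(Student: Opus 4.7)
The plan is to leverage Proposition \ref{prop:l2easy1} together with the Hodge decomposition of chains. By that proposition, $h_i(X,\norm{\cdot}_2,\R)^2$ is the smallest eigenvalue of $\partial d$ acting on $i$-cycles $Z_i$, and $h^{i+1}(X,\norm{\cdot}_2,\R)^2$ is the smallest eigenvalue of $d\partial$ acting on $(i+1)$-cocycles $Z^{i+1}$. I will show that both eigenvalues agree by identifying them as the nonzero singular values of a single operator.

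First I would recall the Hodge decomposition, which writes $C_i = \mathcal{H}_i \oplus B_i \oplus B^i$ as an orthogonal direct sum, where $B_i = \mathrm{image}(\partial)$, $B^i = \mathrm{image}(d)$, and $\mathcal{H}_i$ is the space of harmonic $i$-chains (equivalently, $H_i(X,\R)$ under the usual identification). The hypothesis $H_i(X,\R)=0$ forces $\mathcal{H}_i=0$, hence $Z_i = B_i$; similarly $H_{i+1}(X,\R)=0$ forces $Z^{i+1} = B^{i+1}$. Under these identifications, the operator $d$ restricts to a map $T : B_i \to B^{i+1}$, whose adjoint with respect to the $L^2$ inner product is $\partial : B^{i+1} \to B_i$.

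Next I would observe that $T$ is an isomorphism: injectivity follows from $\ker d \cap B_i = Z^i \cap B_i = 0$ (by the orthogonality in Hodge decomposition), and surjectivity follows from $Z^{i+1} = B^{i+1} = d(C_i) = d(B_i)$, since $d$ vanishes on $B^i$ and on the (now trivial) harmonic part. Then $T^*T = \partial d$ restricted to $B_i$ and $TT^* = d\partial$ restricted to $B^{i+1}$, and these two positive operators have the same nonzero eigenvalues (a general fact about $T^*T$ and $TT^*$). Consequently
\[
\lambda_{\min}(\partial d\, |_{Z_i}) \;=\; \lambda_{\min}(\partial d\, |_{B_i}) \;=\; \lambda_{\min}(d\partial\, |_{B^{i+1}}) \;=\; \lambda_{\min}(d\partial\, |_{Z^{i+1}}),
\]
and taking square roots gives the claimed equality.

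There is no real obstacle here once the setup is in place; the statement is essentially a bookkeeping consequence of Hodge theory combined with Proposition \ref{prop:l2easy1}. The only subtlety is making sure the Hodge decomposition is applied in the right degrees: the vanishing of $H_i$ is what lets us replace cycles by boundaries in degree $i$, while the vanishing of $H_{i+1}$ is what lets us replace cocycles by coboundaries in degree $i+1$. Both are needed to identify the two spectra with that of the single isomorphism $d : B_i \to B^{i+1}$.
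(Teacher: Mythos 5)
Your proposal is correct and takes essentially the same approach as the paper: invoke Proposition~\ref{prop:l2easy1} and then use the standard linear-algebra fact that $\partial d$ and $d\partial$ have the same nonzero spectrum (equivalently, $T^*T$ and $TT^*$ share nonzero eigenvalues for $T = d$ restricted to the relevant subspace). The paper states this in one line; you fill in the bookkeeping via the Hodge decomposition, which is exactly the justification implicitly being relied on.
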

\begin{proof}
    This follows from \cref{prop:l2easy1} and fact that the spectrum of $\partial d$ and the spectrum of $d\partial$ agree.
\end{proof}

So in the case of $L^2$ norms on a rational homology sphere, there are only two different Cheeger constants, $h_0=h^1=h_2=h^3$ and $h_1=h^2$.

The following variant of the Cheeger constant will also be useful:
\begin{definition}

$$\hprime(X,\norm{\cdot},R) = \inf_{\substack{\alpha \in C_{2}(X,R) \\ \alpha \neq 0}} \sup_{\substack{\beta \in C_{1}(X,R)\\\gamma \in C_2(X,R)\\ \partial \gamma = 0\\ \alpha=d \beta + \gamma}} \frac{\norm{\alpha}}{\norm \beta + \norm \gamma}$$

In other words, $\hprime(X)$ is large if every 2-chain $\alpha$ (now not necessarily closed or coclosed) has an efficient decomposition into a coexact piece $d\beta$ and a closed piece $\gamma$, and the coexact piece has an efficient primitive.
\end{definition}

\begin{prop}\label{prop:l2easy}
Suppose $H^2(X,\R)=0$. Then $$\frac 1 {h^2(X,\norm{\cdot}_2,\R)} \leq \frac 1 {\hprime(X,\norm{\cdot}_2,\R)} \leq \frac 1 {h^2(X,\norm{\cdot}_2,\R)} + 1$$.
\end{prop}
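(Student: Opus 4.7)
The approach is to apply the $L^2$ Hodge decomposition on 2-chains. Since $H^2(X,\R)=0$, the space of harmonic chains $\mathcal H_2=\ker d\cap\ker\partial$ in $C_2$ vanishes, yielding an orthogonal splitting
$$C_2(X,\R) = \operatorname{im}(d|_{C_1}) \oplus \operatorname{im}(\partial|_{C_3}).$$
This splitting mirrors the structure of the decomposition $\alpha = d\beta+\gamma$ appearing in the definition of $\hprime$, and both inequalities should follow by relating this splitting to the cofilling problem underlying $h^2$.

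For the direction $1/h^2\leq 1/\hprime$, I would prove the equivalent statement $\hprime\leq h^2$ by restricting the $\hprime$ infimum to the smaller class of cocycles. The key observation is that if $d\alpha=0$ and $\alpha=d\beta+\gamma$ with $\partial\gamma=0$, then applying $d$ gives $d\gamma=d\alpha-d^2\beta=0$, so $\gamma\in\ker d\cap\ker\partial=0$. Therefore on cocycles, the decompositions permitted in $\hprime$ coincide with the cofillings used to define $h^2$, and restricting the infimum to cocycles can only make the infimum smaller.

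For the direction $1/\hprime\leq 1/h^2+1$, I would produce an explicit efficient decomposition for an arbitrary nonzero $\alpha\in C_2$. Use the Hodge splitting to write $\alpha=\alpha_1+\alpha_2$ with $\alpha_1\in\operatorname{im}(d|_{C_1})$ and $\alpha_2=\partial\eta\in\operatorname{im}(\partial|_{C_3})$; orthogonality then gives $\|\alpha_1\|_2,\|\alpha_2\|_2\leq\|\alpha\|_2$. Set $\gamma:=\alpha_2$, which automatically satisfies $\partial\gamma=\partial^2\eta=0$. Since $\alpha_1$ is a coboundary it is in particular a cocycle, so the definition of $h^2$ supplies $\beta$ with $d\beta=\alpha_1$ and $\|\beta\|_2\leq\|\alpha_1\|_2/h^2\leq\|\alpha\|_2/h^2$. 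Summing gives $\|\beta\|_2+\|\gamma\|_2\leq(1/h^2+1)\|\alpha\|_2$, which is the required bound.

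I don't anticipate a genuine obstacle once Hodge theory is invoked; the only care needed is to use the adjoint conventions set up in \cref{sec:cheeger}, ensuring the splitting is orthogonal in $L^2$. The ``$+1$'' on the right-hand side is precisely the cost incurred from the cycle summand $\gamma$, whose $L^2$ norm can equal $\|\alpha\|_2$ in the worst case (this occurs when $\alpha$ itself is a cycle, in which case $\alpha_1=0$ and no cofilling is needed at all).
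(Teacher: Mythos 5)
Your proposal is correct and takes essentially the same approach as the paper: the first inequality comes from observing that for coclosed $\alpha$ the summand $\gamma$ is forced to be harmonic hence zero, and the second from the $L^2$ orthogonal Hodge splitting of $\alpha$ into a coexact piece (cofilled via $h^2$) plus a closed piece of norm at most $\|\alpha\|_2$. The paper simply phrases the splitting as $\mathrm{im}(d)\oplus\ker\partial$ rather than $\mathrm{im}(d)\oplus\mathrm{im}(\partial)$, which coincides under the harmonic-vanishing hypothesis.
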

\begin{proof}
    The condition on $H^2(X,\R)$ guarantees that $h^2$ is nonzero. The first inequality is true essentially by definition: if $\alpha$ is a coclosed 2-chain and $\alpha = d\beta + \gamma$ is a decomposition of the kind considered in the definition of $\hprime$, then $\gamma$ is both closed and closed, hence 0. To see the second inequality, observe that with respect to the $L^2$ norm, any 2-chain $\alpha$ has an orthogonal decomposition into a closed part and a coexact part. The coexact part has a primitive of size $\frac 1 {h_2(X,\norm{\cdot}_2,\R)}\norm{\alpha}_2$ while the closed part has norm at most $\norm{\alpha}_2$.
\end{proof}

\begin{lemma}[Norm comparison]\label{lem:normcomparison}
Let $X$ be a simplicial complex. For any $i$ and any $p,q \in [0,\infty]$, we have $$h_i(X,\norm\cdot_p,\R) \leq \poly(\vol(X)) h_i(X,\norm\cdot_q,\R)$$
\end{lemma}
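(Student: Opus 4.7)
The plan is to interpret the claim as a finite-dimensional norm equivalence. Since $X$ is a finite simplicial complex, each chain group $C_j(X,\R)$ is a finite-dimensional real vector space with distinguished basis the $j$-simplices. First I would bound its dimension: under the paper's conventions (with $X$ bounded-dimensional), each top-dimensional simplex contributes a bounded number of sub-faces, so $\dim C_j \leq O(\vol(X))$.

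Next I would invoke the classical $L^p$--$L^q$ equivalence on $\R^n$ with a fixed basis,
\begin{equation*}
 n^{-|1/p-1/q|}\norm{v}_q \leq \norm{v}_p \leq n^{|1/p-1/q|}\norm{v}_q,
\end{equation*}
valid for $p,q \in [1,\infty]$ (with an analogous comparison for quasi-norms if $p < 1$ is genuinely intended). Applying this on both $C_i(X,\R)$ and $C_{i+1}(X,\R)$ with $n = O(\vol(X))$ produces a single constant $A = \poly(\vol(X))$ for which $A^{-1}\norm{\cdot}_q \leq \norm{\cdot}_p \leq A\norm{\cdot}_q$ on both chain groups simultaneously.

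The last step is to convert this into a Cheeger constant comparison. Setting $F_r(\alpha) = \inf\{\norm{\beta}_r : \partial \beta = \alpha\}$, we have $h_i(X,\norm{\cdot}_r,\R) = \inf_\alpha \norm{\alpha}_r/F_r(\alpha)$. Since any $p$-filling is also a $q$-filling and vice versa, the norm equivalence on $C_{i+1}$ yields $A^{-1}F_q(\alpha) \leq F_p(\alpha) \leq A F_q(\alpha)$. Combining with $\norm{\alpha}_p \leq A\norm{\alpha}_q$ on $C_i$, I get
\begin{equation*}
 \frac{\norm{\alpha}_p}{F_p(\alpha)} \leq \frac{A\norm{\alpha}_q}{A^{-1}F_q(\alpha)} = A^2\, \frac{\norm{\alpha}_q}{F_q(\alpha)}
\end{equation*}
for every cycle $\alpha$. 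Taking the infimum over cycles gives $h_i(X,\norm{\cdot}_p,\R) \leq A^2 \, h_i(X,\norm{\cdot}_q,\R) \leq \poly(\vol(X))\, h_i(X,\norm{\cdot}_q,\R)$.

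There is no real obstacle here: this is a routine application of finite-dimensional norm equivalence, and indeed the lemma is morally just the statement that on a finite complex all $L^p$ norms on chains are equivalent up to polynomial factors. The only points that warrant care are (i) confirming $\dim C_j \leq \poly(\vol(X))$ under whichever convention for $\vol$ is in force (top simplices vs.\ total simplex count), and (ii) arranging one single polynomial factor to govern the comparisons on both $C_i$ and $C_{i+1}$; both are immediate under the standing conventions.
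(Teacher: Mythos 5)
Your proposal is correct and is essentially the paper's own proof, which is stated in one line ("$L^p$ norms on a finite simplicial complex differ by at most a factor of $\poly(\vol(X))$"); you have merely spelled out the finite-dimensional $L^p$--$L^q$ comparison and the resulting comparison of fillings. Your parenthetical caveat about $p<1$ (where $\norm{\cdot}_p$ is only a quasi-norm and the comparison constant $n^{|1/p-1/q|}$ is polynomial only for $p$ bounded away from $0$) is a legitimate wrinkle in the paper's ``$p,q\in[0,\infty]$'' phrasing, but not a gap in your argument for the range where the lemma is actually applied.
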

\begin{proof}
    $L^p$ norms on a finite simplicial complex differ by at most a factor of $\poly(\vol(X))$.
\end{proof}

The Leray-Serre spectral sequence computes the homology groups of the total space of a fibration in terms of the homology groups of the fiber and the base space. When the homology of the total space is trivial, we can make this computation quantitative and give lower bounds on the Cheeger constants of the total space in terms of the Cheeger constants of the fiber and the base space. We will need only the simplest version of this result:
\begin{lemma}\label{lem:lerayserre}
    Suppose $E$ and $B$ are finite connected graphs, each of degree $<D$. Suppose $\pi: E \to B$ is a surjective simplicial map, and for any vertex $v\in B$, $\pi^{-1}(v)$ is connected and $|\pi^{-1}(v)| < C$. Then the Cheeger constants for $E$ and $B$ are comparable. Specifically, for any $1\leq p \leq \infty$, we have

    $$\frac 1 {h_0(E,\norm{\cdot}_p)} \leq 2D\left(\frac 1 {h_0(B,\norm{\cdot}_p)} +1 \right)\max_{v\in B} \left(\frac 1 {h_0(\pi^{-1}(v),\norm{\cdot}_p)} + 1 \right)$$

    and
    $$\frac 1 {h^1_{\coexact}(E,\norm{\cdot}_p)} \leq C\left(\frac 1 {h^1_{\coexact}(B,\norm{\cdot}_p)} +1 \right)\max_{v\in B} \left(\frac 1 {h^1_{\coexact}(\pi^{-1}(v),\norm{\cdot}_p)} + 1 \right)$$

\end{lemma}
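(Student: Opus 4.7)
The plan is to prove both inequalities by a common decomposition strategy: express the given chain or cochain on $E$ as a ``base'' piece that descends to $B$ plus a ``fiber'' piece supported on individual fibers, handle each piece using its respective Cheeger constant on $B$ or on the fibers, and recombine. This is the quantitative incarnation of the Leray spectral sequence alluded to in the lemma statement.

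For the first inequality, I would take an augmented 0-cycle $\alpha \in C_0(E,\R)$ and push it forward to $\pi_*\alpha \in C_0(B,\R)$, which remains augmented-closed and satisfies $\norm{\pi_*\alpha}_p \leq C \norm{\alpha}_p$ by Jensen's inequality (each fiber has at most $C$ vertices). Apply $h_0(B)$ to produce a 1-chain $\beta_B$ filling $\pi_*\alpha$ of norm $\leq \frac{1}{h_0(B)}\norm{\pi_*\alpha}_p$, fix an edge-level set-theoretic section $\sigma$ of $\pi$ (picking one edge of $E$ above each edge of $B$), and lift to $\tilde\beta = \sigma_* \beta_B$. Since $\pi_*(\alpha - \partial\tilde\beta) = 0$, the residual $\alpha - \partial\tilde\beta$ restricts to an augmented 0-cycle on each fiber $F_w$; fill each using $h_0(F_w)$ and sum the results with $\tilde\beta$. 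The dominant norm inflation is $\norm{\partial\tilde\beta}_p \leq 2D\norm{\tilde\beta}_p = 2D\norm{\beta_B}_p$ (each vertex is incident to at most $D$ edges), giving the factor of $2D$ in the statement.

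For the second inequality, I would take a coexact 1-cochain $\alpha = d\phi$. Its restriction to each fiber remains coexact (equal to $d(\phi|_{F_w})$), so I replace $\phi|_{F_w}$ by its optimal representative $\tilde\phi_w$ modulo constants in $F_w$, satisfying $\norm{\tilde\phi_w}_p \leq \frac{1}{h^1_{\coexact}(F_w)}\norm{\alpha|_{F_w}}_p$. The difference $\phi - \tilde\phi$ is by construction constant along each fiber, and so equals $\pi^*c$ for a well-defined $c:B\to\R$; a direct check shows that $dc(\pi(e)) = \alpha(e) - d\tilde\phi(e)$ is consistent across the preimages of a base edge. Apply $h^1_{\coexact}(B)$ to find a small representative of $c$ modulo global constants, controlling $\norm{dc}_p$ in terms of $\norm{\alpha}_p$ and $\norm{d\tilde\phi}_p$. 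Combining with the pullback estimate $\norm{\pi^*c}_p \leq C \norm{c}_p$ produces the claimed product bound.

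The main obstacle is the norm bookkeeping: tracking the interaction between fiber-wise cofilling, the pullback, and the coboundary operator $d$ introduces multiplicative factors of $D^{1/p}$ or $C^{1/p}$ that must be absorbed into the stated constants. The additive $(+1)$ terms in the statement arise as cross-terms when combining the two Cheeger-constant bounds; they also cover the base norm of $\alpha$ itself in the case where only one of the two Cheeger constants provides meaningful control. The whole argument is uniform in $p \in [1,\infty]$ because only linear algebra and Jensen-type inequalities are used throughout.
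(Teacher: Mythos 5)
Your proposal matches the paper's proof essentially step for step: for $h_0$, push forward to $B$, fill there, lift the filling by a choice of edge-section of $\pi$, and correct the residual fiber by fiber; for $h^1_{\coexact}$, cofill fiber by fiber, observe that the residual descends to a coexact cocycle $\gamma$ on $B$, cofill $\gamma$ on $B$, and pull back. The one small bookkeeping discrepancy is that your Jensen estimate $\norm{\pi_*\alpha}_p \leq C\norm{\alpha}_p$ (sharp form $C^{1-1/p}$) would introduce a factor of $C$ into the first inequality that the lemma statement does not contain; the paper's own proof simply writes $\norm{\pi_*\alpha}_p\leq \norm{\alpha}_p$, which is exactly right at $p=1$ but needs the same $C^{1-1/p}$ correction for $p>1$, so this is a shared imprecision rather than a flaw in your plan.
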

\begin{proof}
    Let $\alpha$ be a 0-cycle in $E$. (Recall that 0-cycle means a chain that it is closed in the augmented chain complex, eg a pair of oppositely oriented points.) We want to construct an efficient filling for $\alpha$. Let $\eta$ be the least norm filling for $\pi(\alpha)$. Let $\pi^{-1}(\eta)$ be any choice of lift of $\eta$. With this choice, $\partial \pi^{-1}(\eta)-\alpha$ restricts to a 0-cycle on $\pi^{-1}(v)$ for any $v\in B$. For each $v$, let $\beta_v$ be a filling for $\partial \pi^{-1}(\eta) - \alpha$. Then $\sum_{v\in B} \beta_v + \pi^{-1}(\eta)$ is a filling for $\alpha$. Let us estimate the norm of this filling.

    \begin{align*}
        \norm{\pi^{-1}(\eta)}_p &\leq \norm{\eta}_p\\
        &\leq \frac 1 {h_0(B,\norm{\cdot}_p)} \norm{\alpha}_p\\
        \norm{\sum_{v\in B} \beta_v}_p &\leq \max_{v\in B} \frac 1 {h_0(\pi^{-1}(v), \norm{\cdot}_p)} \norm{\partial \pi^{-1}(\eta) - \alpha}_p\\
        &\leq \max_{v\in B} \frac 1 {h_0(\pi^{-1}(v), \norm{\cdot}_p)} \left(\frac {2D} {h_0(B,\norm{\cdot}_p)} + 1\right)\norm{\alpha}_p\\
    \end{align*}

    In the last line, we used the fact that $\norm{\partial e}_p \leq 2D \norm {e}_p$. Combining these two inequalities, we find that our filling has small enough norm to validate the first inequality. Now we do the dual contruction to get the second inequality. Let $\alpha$ be a coexact 1-cocycle in $E$. For each vertex $v\in B$, let $\beta_v$ be a $0$-cochain which is a cofilling for the restriction of $\alpha$ to $\pi^{-1}(v)$. Then $\alpha - \sum_{v\in B} d\beta_v$ is a coexact 1-cocycle which vanishes on any edge of $E$ whose endpoints project to the same vertex in $B$. Since the fibers of $\pi$ are connected, any coexact 1-cocycle with this property must take the same value on the different lifts of an edge of $B$ to $E$. Therefore, $\alpha - \sum_{v\in B}=\pi^{-1}(\gamma)$ for some coexact 1-cocycle $\gamma$ on $B$. Let $\eta$ be the smallest cofilling for $\gamma$. Then $\sum_{v\in B} \beta_v + \pi^{-1}(\eta)$ is a cofilling of $\alpha$. Let's estimate the norm of this cofilling.

    \begin{align*}
        \norm{\sum_{v\in B} \beta_v}_p &\leq \max_{v\in B} \frac 1 {h^1_{\coexact}(\pi^{-1}(v), \norm{\cdot}_p)} \norm{\alpha}_p\\
        \norm{\pi^{-1}(\eta)}_p & \leq  C\norm{\eta}_p\\
        &\leq C\frac 1 {h^1_{\coexact}(B,\norm{\cdot}_p)} \norm{\gamma}_p\\
        &\leq C\frac 1 {h^1_{\coexact}(B,\norm{\cdot}_p)} \left(\norm{\alpha}_p + \norm{\sum_{v\in B} \beta_v}_p \right)\\
        &\leq C\frac 1 {h^1_{\coexact}(B,\norm{\cdot}_p)} \left(\max_{v\in B} \frac 1 {h^1_{\coexact}(\pi^{-1}(v)), \norm{\cdot}_p)}  +1 \right) \norm{\alpha}_p\\
    \end{align*}
    Combining these two inequalities gives the desired bound on $h^1_{\coexact}(E,\norm{\cdot}_p)$.
\end{proof}

\begin{definition}
    We say that a Riemannian manifold has \emph{$c$-bounded geometry} if its sectional curvatures are less than $c$ in absolute value and its injectivity radius is greater than $\frac 1 c$. We say that a simplicial complex has $c$-bounded geometry if the maximum number of simplices containing any given vertex is at most $c$.
\end{definition}

We will often build a filling as a composition of several immersed cobordisms. This motivates the following definition:
\begin{definition}
    Given an $i$-cycle $c$, we say that $c$ can be \emph{transported} to an $i$-cycle $c'$ for \emph{cost} $x$ if there is an $i+1$-cycle $C$ with $\partial C=c'-c$ and $\norm{C}\leq x\norm c$. We say that $c$ can be filled for cost $x$ if $c$ can be transported to 0 for cost $\leq x$. If $c$ can be transported to $c'$ for cost $x$, and $c'$ can be transported to $c''$ for cost $y$, then $c$ can be transported to $c''$ for cost $xy + x$.
\end{definition}

\begin{lemma}\label{lem:expfill}
Let $X$ be a finite simplicial complex. Suppose that there exist constants $a$ and $x$ satisfying $0 < a < 1$ and such that any $i$-cycle $c$ can be transported to an $i$-cycle $c'$ satisfying $\norm{c'} \leq a\norm{c}$ for cost $x$. Then $h_{i}(X,\norm \cdot) \geq \frac{1-a}x$. In particular, $H_i(X)=0$.
\end{lemma}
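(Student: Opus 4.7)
The plan is to iterate the transport hypothesis to build a filling by a rapidly convergent geometric series of cobordisms.

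Let $c_0$ be any $i$-cycle in $X$. By hypothesis, there exists an $i$-cycle $c_1$ and an $(i+1)$-chain $C_1$ with $\partial C_1 = c_1 - c_0$, $\|c_1\| \le a\|c_0\|$, and $\|C_1\|\le x\|c_0\|$. Applying the hypothesis to $c_1$, we obtain $c_2$ and $C_2$ with $\partial C_2 = c_2 - c_1$, $\|c_2\|\le a\|c_1\|\le a^2\|c_0\|$, and $\|C_2\|\le x\|c_1\|\le xa\|c_0\|$. Iterating, we produce sequences $(c_k)$ of $i$-cycles and $(C_k)$ of $(i+1)$-chains with
\begin{equation*}
\|c_k\|\le a^k\|c_0\|, \qquad \|C_k\|\le xa^{k-1}\|c_0\|, \qquad \partial C_k = c_k - c_{k-1}.
\end{equation*}
Telescoping gives $\partial\bigl(\sum_{k=1}^n C_k\bigr) = c_n - c_0$.

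Now I would use that $X$ is a finite simplicial complex, so the space $C_{i+1}(X,\R)$ is finite dimensional and hence complete in $\|\cdot\|$. Since $\sum_{k=1}^\infty \|C_k\| \le \sum_{k=0}^\infty xa^k \|c_0\| = \frac{x}{1-a}\|c_0\|<\infty$, the partial sums converge absolutely to some $(i+1)$-chain $C_\infty$. Passing to the limit and using $c_n \to 0$ (since $a<1$) together with continuity of $\partial$, we get $\partial C_\infty = -c_0$, so $-C_\infty$ fills $c_0$ with $\|{-C_\infty}\| \le \frac{x}{1-a}\|c_0\|$.

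Taking the infimum over nonzero $i$-cycles $c_0$ yields $h_i(X,\|\cdot\|)\ge \frac{1-a}{x}$. In particular every cycle bounds, so $H_i(X)=0$. I don't anticipate any real obstacle; the argument is a finite-dimensional geometric series, and the only thing to double-check is that the definition of ``transport'' chains together additively in the way needed, which is immediate from $\partial$ being linear.
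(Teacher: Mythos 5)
Your argument is correct and is essentially the paper's own proof, just written out in more detail: both iterate the transport hypothesis to produce a telescoping sequence of cobordisms whose norms are dominated by a geometric series of ratio $a$, yielding a filling of norm at most $\frac{x}{1-a}\|c_0\|$. No issues.
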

\begin{proof}
    Since each $i$-cycle can be transported to a smaller cycle, we can transport any cycle to zero using an infinite series of $i$-chains. This sum converges since its norm is bounded above by a geometric series of ratio $a$. This is the efficient filling we need.
\end{proof}

\section{Comparing simplicial and de Rham Cheeger constants}\label{sec:comparison}
Let $M$ be a Riemannian $n$-manifold with 1-bounded geometry. Let $X$ be a smooth triangulation of $M$ such that every simplex has uniformly controlled geometry (ie, every simplex is diffeomorphic to a regular simplex with unit sidelengths by a map with all derivatives bounded above by some constant $K$), and every simplex is contained in a ball of radius $\frac 1 {100n}$. Such a triangulation always exists for some $K$ depending only on $n$ (see Theorem 3 of \cite{boissonnat_delaunay_2015}). It is usually easier to work with simplicial cochains than de Rham cochains. One reason is that $d$ is a bounded operator on simplicial cochains. 
\begin{prop}\label{prop:dbounded}
    Let $\norm{\cdot}_p$ be an $L^p$ norm for some choice $1\leq p \leq \infty$. Let $c$ be a cochain in $C^*(X)$. Then $\norm{dc}_p \leq A\norm{c}_p$ for some $A$ depending only on the local degree of $X$. Similarly, if $c$ is an $i$-chain in $C_*(X)$, then $\norm {\partial c} \leq A \norm c_p$ for some $A$ depending only on $i$.
\end{prop}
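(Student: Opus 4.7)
The plan is to reduce the bound to a purely combinatorial statement about the coboundary formula and then apply a straightforward norm inequality, using bounded local geometry to control the multiplicities that arise when one switches orders of summation.

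First I would recall the explicit formula: for a simplicial $i$-cochain $c$ and an $(i+1)$-simplex $\tau$,
\[(dc)(\tau) = \sum_{\sigma \subset \tau,\ \dim\sigma = i} \pm c(\sigma),\]
a sum of exactly $i+2$ terms. For $p = \infty$ the triangle inequality immediately gives $\|dc\|_\infty \le (i+2)\|c\|_\infty$. For $p = 1$, note that a given $i$-simplex $\sigma$ appears in the sum for $(dc)(\tau)$ only for those $(i+1)$-simplices $\tau$ having $\sigma$ as a face; by the local degree hypothesis on $X$ (which is inherited from $1$-bounded geometry via the uniformly controlled geometry of the simplices), the number of such $\tau$ is bounded by some constant $D$ depending only on the local degree. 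Switching the order of summation gives $\|dc\|_1 \le D\|c\|_1$.

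For general $1 \le p \le \infty$, I would apply the power-mean inequality to the coboundary formula to obtain
\[|(dc)(\tau)|^p \le (i+2)^{p-1}\sum_{\sigma \subset \tau,\ \dim \sigma = i} |c(\sigma)|^p,\]
then sum over $\tau$ and swap the order of summation as above, yielding
\[\|dc\|_p^p \le (i+2)^{p-1} D \,\|c\|_p^p.\]
Taking $p$-th roots gives the desired bound with $A = ((i+2)^{p-1}D)^{1/p}$, which is at most $\max((i+2),D)$ independent of $p$.

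The statement for the boundary operator $\partial$ on chains is formally dual: with respect to the preferred simplex basis, $\partial$ is the transpose of $d$, so the matrix entries of $\partial$ and $d$ have the same absolute values up to reindexing, and the identical counting argument (now bounding each row of $\partial$ by $i+2$ terms and each column by the local degree) gives $\|\partial c\|_p \le A\|c\|_p$ with the same constant. I don't anticipate any real obstacle; the only thing to be careful about is that the constant $A$ genuinely depends on both the local degree of $X$ and the dimension $i$, which is already acknowledged in the statement.
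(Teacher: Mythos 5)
Your argument is correct; the paper states this proposition without proof, treating it as routine, and the power-mean interpolation between $p=1$ (column count) and $p=\infty$ (row count) is exactly the standard way to prove it. Two small slips that don't affect the conclusion: for $\partial\colon C_i\to C_{i-1}$ an $i$-simplex has $i+1$ (not $i+2$) codimension-one faces, and you have rows and columns reversed in the final sentence (rows of $\partial$, indexed by $(i-1)$-simplices, have at most $D$ nonzero entries, while columns have at most $i+1$). More worth noting: your own calculation gives, for $\partial$, the bound
\[
\norm{\partial c}_p \le D^{(p-1)/p}(i+1)^{1/p}\,\norm{c}_p \le \max(D,\,i+1)\,\norm{c}_p ,
\]
so for $p>1$ the constant genuinely depends on the local degree $D$ as well, and the statement's phrase ``$A$ depending only on $i$'' is accurate only at $p=1$ (the mass norm). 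This is not a defect of your proof but a mild imprecision in the proposition as written, and it is in fact consistent with how the paper uses the bound elsewhere, e.g.\ the factor $2D$ in $\norm{\partial e}_p\le 2D\norm{e}_p$ in the proof of \cref{lem:lerayserre}.
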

Thus, simplicial cochains can be thought of as de Rham cochains without any high frequency components. The Cheeger constants for $M$ do not depend on these high frequency components, and can be recovered up to a constant factor from the Cheeger constants of $X$:

\begin{prop}\label{prop:simplicial}
    Let $\norm{\cdot}_p$ be an $L^p$ norm for some choice $1\leq p \leq \infty$. Then the Cheeger constants for $M$ and $X$ with respect to $\norm{\cdot}_p$ are comparable. That is, for any $i$ we have 
    $$\frac 1 {h^i(M,\norm{\cdot}_p)} \leq \frac c {h^i(X,\norm{\cdot}_p)} + d$$
        $$\frac 1 {h^i(X,\norm{\cdot}_p)} \leq \frac c {h^i(M,\norm{\cdot}_p)} + d$$ 
        for some positive constants $c,d$ depending only on $n$, $p$, and $K$. The same holds when $h_i$ and $h^i$ are replaced with $h_{i,\exact}$ and $h^i_{\coexact}$.
\end{prop}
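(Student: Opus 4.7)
The plan is to construct explicit linear maps realizing the de Rham isomorphism between simplicial and smooth cochains, with uniform control in every $L^p$ norm. Let $W \colon C^i(X,\R) \to \Omega^i(M)$ be the Whitney embedding, sending a simplicial cochain to the piecewise affine form built from the standard Whitney forms on stars of simplices, and let $I \colon \Omega^i(M) \to C^i(X,\R)$ be the integration map $(I\omega)_\sigma = \int_\sigma \omega$. Both maps commute with the exterior derivative and satisfy $I \circ W = \mathrm{id}$. The bounded geometry of the triangulation gives constants $C_1, C_2$ depending only on $n$, $p$, and $K$ with $\norm{W\alpha}_p \le C_1 \norm{\alpha}_p$ and $\norm{I\omega}_p \le C_2 \norm{\omega}_p$: the first follows from the explicit form of Whitney forms together with the uniformly bounded overlap of open stars, the second from Hölder's inequality applied simplex by simplex.

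The second ingredient is a chain homotopy $A \colon \Omega^i(M) \to \Omega^{i-1}(M)$ satisfying $\mathrm{id} - W \circ I = dA + Ad$ with $\norm{A\omega}_p \le C_3 \norm{\omega}_p$ for a constant $C_3$ depending only on $n$, $p$, and $K$. The existence of such an $A$ is classical in the proof of the simplicial de Rham theorem; one standard construction builds $A$ by coning along a radial deformation retract of each open star onto its barycenter, and the bounded geometry of $X$ propagates through this construction to give uniform $L^p$ bounds on each star. Establishing this uniform $L^p$ boundedness of $A$ is the main technical step; every other estimate below is local and follows immediately from the bounded geometry of the triangulation.

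With $W$, $I$, and $A$ in hand the two inequalities are formal. For the first, let $\alpha \in \Omega^i(M)$ be a de Rham cocycle. Then $I\alpha$ is a simplicial cocycle, so there is a simplicial cochain $\gamma$ with $d\gamma = I\alpha$ and $\norm{\gamma}_p \le (1/h^i(X,\norm{\cdot}_p) + \varepsilon)\norm{I\alpha}_p$. Using $d\alpha = 0$, the chain homotopy yields $\alpha = WI\alpha + dA\alpha = d(W\gamma + A\alpha)$, and the $L^p$ bounds combine to give $\norm{W\gamma + A\alpha}_p \le C_1 C_2 (1/h^i(X,\norm{\cdot}_p) + \varepsilon)\norm{\alpha}_p + C_3 \norm{\alpha}_p$. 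Sending $\varepsilon \to 0$ proves the first inequality with $c = C_1 C_2$ and $d = C_3$. Conversely, given a simplicial cocycle $\alpha$, the form $W\alpha$ is closed, so it has a de Rham cofilling $\gamma$ with $\norm{\gamma}_p \le (1/h^i(M,\norm{\cdot}_p) + \varepsilon)\norm{W\alpha}_p$; then $I\gamma$ is a simplicial cofilling of $IW\alpha = \alpha$ of $L^p$ norm at most $C_1 C_2 (1/h^i(M,\norm{\cdot}_p) + \varepsilon)\norm{\alpha}_p$, giving the reverse inequality.

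For the coexact variant $h^i_{\coexact}$, note that $W$ and $I$ commute with $d$ and hence send coexact cochains to coexact cochains, so the same argument applies verbatim. The chain version with $h_i$ or $h_{i,\exact}$ can be obtained either by Poincaré duality from the cochain version on the oriented manifold $M$, or directly by the same method combined with \Cref{thm:fedapprox} to pass between smooth currents and polyhedral chains, exactly as in the proof of \Cref{prop:realrational}.
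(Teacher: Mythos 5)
Your high-level plan is structurally close to the paper's: construct chain maps between simplicial and de Rham cochains with uniform $L^p$ bounds, together with $L^p$-bounded chain homotopies to the identity, and then deduce the Cheeger comparison formally. The formal part of your argument (once $W$, $I$, $A$ are in hand) is correct, and the observation that $IW=\mathrm{id}$ is a genuine simplification: it eliminates the need for a second homotopy on the simplicial side. Your use of Whitney forms and the integration map is also a more classical choice than the paper's, which instead sends each cell $c$ to a smooth representative of its Poincar\'e--Thom dual $f(c)\in\Omega^{n-i}$ and builds $g$ via the dual complex $X^*$, so that both $f$ and $g$ change degree; the paper then needs homotopies both between $g\circ f$ and the identity (the simplicial one, $I$) and between $f\circ g$ and the identity (the de Rham one, $J$).

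The gap is that the entire content of the proposition is the existence of the $L^p$-bounded homotopy $A$, and you assert it rather than prove it. You flag it yourself as ``the main technical step'' but then delegate it to ``coning along a radial deformation retract of each open star onto its barycenter,'' with the bound claimed to ``propagate.'' This is too vague to stand as a proof. Concretely: the open stars of distinct vertices overlap, so a per-star cone operator does not directly glue into a single globally defined operator $A$ with $\mathrm{id}-WI = dA+Ad$; the standard fixes (the \v{C}ech--de Rham double complex, or an inductive patching over skeleta) each introduce corrections whose $L^p$ control requires real work. And the $L^p$ boundedness of cone/Poincar\'e-type operators, while true, requires a Hardy-type estimate that depends on the degree of the form and is not ``local and immediate.'' The paper's proof of this same proposition is almost entirely devoted to this point: it replaces the cone-per-star idea with an explicit integral kernel $j_i^\varepsilon = \int_t v^*(k_i - \delta_i^\varepsilon)$ built from a geodesic scaling flow, verifies the homotopy identity by an inductive calculation, and shows the kernel has a controlled singularity of order $|x-y|^{-(i-1)}$ giving a uniform $L^1$ bound on columns and rows, hence $L^p$ boundedness by Young. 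That is roughly three pages of careful estimation, which is precisely what your sketch omits. Until the construction of $A$ with uniform $L^p$ bounds is actually carried out (or cited to a reference that proves it for a fixed, non-refining, bounded-geometry triangulation in all $L^p$), the proposal has a gap at its core.

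A few smaller points worth flagging: the Whitney form $W\alpha$ is only Lipschitz, not smooth, while the paper's de Rham cochains are smooth forms, so a mollification step (with its own small $L^p$ bookkeeping) is needed; and for the chain-side statements, the reduction via Federer--Fleming or Poincar\'e duality that you mention in passing should be made explicit in the same way \cref{prop:realrational} does.
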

\begin{proof}
    Recall that $C_*(X)$ denotes the simplicial chain complex of $X$ and $C^*(M)$ denotes the be the de Rham complex of $M$. It suffices to find chain maps $f: C_*(X) \to C^{n-*}(M)$ and $g:C^{*}(M) \to C_{n-*}(X)$ which are bounded in $L^p$, such that $g \circ f$ and $f\circ g$ are chain homotopic to the identity via homotopy operators $I$ and $J$ of $L^p$ norm bounded by $c$. Suppose we are in possession of such a pair of maps $f$ and $g$. Let's try to show that any $i$-cycle $\alpha\in C_i(X)$ has an efficient filling. Instead of trying to fill $\alpha$, first find a cofilling $\beta \in C^{n-i-1}(M)$ for $f\alpha \in C^{n-i}(M)$. Then $g\beta + I \alpha$ is a filling of $\alpha$, because
    \begin{align*}
    d(g\beta + I \alpha) &= d g \beta + d I \alpha\\
    &= g d \beta - I d \alpha + \alpha - g\circ f \, \alpha\\
    &= g\circ f \, \alpha + \alpha - g\circ f\, \alpha\\
    &= \alpha
    \end{align*}

    The norm of this filling is controlled by the $i^{th}$ Cheeger constant of $M$ and the norms of $f$,$g$, and $I$: $$\norm{g\beta + I \alpha}_p \leq \norm{g}_p \frac 1 {h^{n-i}(M, \norm{\cdot}_p)} \norm{\alpha}_p + \norm{I}_p \norm{\alpha}_p$$ By Poincar\'e duality, $h^{n-i}=h_i$, so the first inequality is proven. The second inequality is obtained by interchanging the roles of $M$ and $X$. The same arguments go through for $h_{i,\exact}$ and $h^i_{\coexact}$ after restricting $\alpha$ to be exact or coexact respectively.

    Now let us construct $f$ and $g$. First we sketch the intuition behind the construction. Suppose there are $N$ vertices in the triangulation. We form an $nN$ dimensional family of embeddings of $X$ in $M$ by wiggling the $N$ vertices independently. Any wiggling of the vertices can be extended to a wiggling of the higher simplices of $X$. Call this family of embeddings $H$. Any $h_0 \in H$ gives rise to an induced map $h_0: C^*(M) \to C^*(X)$ which integrates an $i$-form over an $i$-simplex. $h_0$ itself may not be bounded in $L^p$. But if we average over all $h_0 \in H$, we get a map $h$ that is bounded in $L^p$. By Poincar\'e duality, $h$ can also be regarded as a map from $C^*(M) \to C_{n-*}(X)$.
    
    Let's make the construction more precise. For any $i$-cell $c$ of $X$, let $N_\varepsilon(c)$ denote the radius $\varepsilon$ neighbourhood of $c$ in $M$. Fix $\varepsilon = \frac 1 {10K\, n!}$; for an $\varepsilon$ this small, $N_\varepsilon(c)$ intersects $N_\varepsilon(c')$ if and only if $c$ and $c'$ are incident. Choose for each vertex $v$ of $X$, a smooth $n$-form $f(v)$ supported in $N_\varepsilon(v)$, so that $\int f(v)=1$ and all the derivatives of $f(v)$ are bounded by constants depending only on $n$ and $K$. This $n$-form may be interpreted as a probability distribution over all possible wigglings of the vertex $v$. Then extend $f$ to higher skeleta of $v$ inductively so that the following conditions are satisfied:
    \begin{itemize}
    \item $f(\partial c)=df(c)$
    \item For each $i$-cell of $X$, $f(c)$ is supported in $N_\varepsilon(c)$.
    \item The norms of $f(c)$ and its derivatives are bounded by constants depending only on $n$ and $K$.
    \end{itemize}
   When we want to extend $f$ to an $i$-cell $c$ having already defined $f(\partial c)$, we define $f(c)$ as a primitive for $f(\partial c)$ obtained using the Poincar\'{e} lemma. $N_\varepsilon(\partial c)$ can be mapped to the neighbourhood of a standard regular $i$-simplex in $\R^n$ by a map with all derivatives bounded by $K$. Therefore, the Poincar\'e lemma gives a primitive with the desired control on the derivatives and norms of $f(c)$. One can inductively show that for any cell $c$ and its dual cell $c^*$, we have $\int_{c^*} f(c)=1$. In particular, $f$ maps the fundamental class of $X$ to the constant function 1 on $M$.

    Let $X^*$  be the dual complex of $X$. Applying the construction above to $X^*$, we get a map $g^*:C_*(X^*) \to C^{n-*}(M)$. Poincar\'{e} duality gives a map $g:C^*(M) \to C_{n-*}(X)$. One difference in the construction of $g$ is that the cells of $X^*$ are not simplices but polyhedra. The easiest way around this is to view cells of $X^*$ as unions of simplices in $X^b$, the barycentric subdivision of $X$. Since each simplex of $X$ is $K$-bilipschitz to a regular $i$-simplex, there is an upper bound on the local degree of $X$. Therefore, each cell of $X^*$ is a union of a bounded number of simplices in $X^b$ and the inclusion map $C_*(X^*) \to C_*(X^b)$ is bounded in norm. So the right way to define $g^*$ is as a composition of maps $C_*(X^*) \to C_*(X^b) \to C^{n-*}(M)$.

    Let $pt_X$ be the class of a point in $H_0(X)$, and let $pt_M$ be the class in $H^n(M)$ which integrates to 1 on the fundamental class. By construction, $f(pt_X)=pt_M$. Since $g^*$ maps the fundamental class on $C_n(X^*)$ to the constant function 1 in $C^0(M)$, we have that $g(pt_M)=pt_X$.

    In what follows, we represent linear maps $C^i \to C^i$ using integral transforms. Let $\pi_1$ and $\pi_2$ be projection maps from $M\times M$ onto each of its factors. Let $\Omega^{i,n-i}(M\times M)$ be the space of sections of $\pi_1^*(\wedge^iT^*M) \otimes \pi_2^*(\wedge^{n-i} T^*M)$, ie differential forms on $M\times M$ with degree $i$ in the first factor and degree $n-i$ in the second factor. A kernel $h_i \in \Omega^{i,n-i}(M\times M)$ gives rise to a map $C^i \to C^i$ by the rule $\alpha \mapsto \int_y h_i \wedge \pi_2^*(\alpha)$. Here, $\int_y$ means fiberwise integration for the bundle $M\times M \xrightarrow{\pi_1} M$.
        
    Let $k_i$ be the integral kernel for the operator $f\circ g$ in degree $i$. It has the explicit formula \begin{equation}\label{eqn:kdef}k_i=\sum_{\text{$c$ an $i$-cell of $X$}} f(c) \otimes g^*(c^*)\end{equation}
    where $c^*$ is the cell of $X^*$ dual to $c$. 
    
    We initially assumed that each simplex of $X$ or $X^b$ is contained in a ball of radius $\frac 1 {100n}$, so $k(-,y)$ is surely supported on a ball of radius $\frac 1 {10}$ around $y$. Moreover, each point $y\in M$ is in the image of $N_\varepsilon(c)$ for a bounded number of cells $c\in X$ or $c\in X^b$. So the number of nonvanishing terms in the formula for $k_i(-,y)$ from \cref{eqn:kdef} is bounded by a function of $K$ and $n$. Our control on the norms and derivatives $f(c)$ and $g(c)$ now gives similar control on $k_i(-,y)$.

    \newcommand{\A}{\Delta_{1/10}}
    Let $\A\subset M\times M$ be the radius $\frac 1 {10}$ neighbourhood of the diagonal.  Define $v:\A \times [1,\infty) \to M$, by $v(x,y,t) = (y+t\overrightarrow {yx},y)$. The meaning of $y+t\overrightarrow{yx}$ is that one takes the unique shortest geodesic from $y$ to $x$, and then extends it by a factor of $t$ to arrive at $y+t \overrightarrow{yx}$. We will sometimes use the shorthand $v_T(x,y)=v(x,y,T)$. Also define the projection maps $\pi_1: \A\times [0,1]\to M$, $\pi_1(x,y,t)=x$, $\pi_2(x,y,t)=y$.

    Fix a parameter $\varepsilon$, and let $\delta^\varepsilon_n\in \Omega^{n,0}(M\times M)$ be the integral kernel for a smooth approximation to the identity which is supported in a $\varepsilon$-neighbourhood of the diagonal. Define $\delta^\varepsilon_i \in \Omega^{i,n-i}(M\times M)$ for $i=0,\dots,n-1$ by extending $\delta^\varepsilon_n$ to a chain map.

Now we define $j_i^\varepsilon $, a candidate for the integral kernel for $J$:

$$j_i^\varepsilon = \int_{t} v^*(k_i  - \delta^\varepsilon_i)$$
Here $\int_t$ is a fiberwise integral for the bundle $M\times M \times [1,\infty) \to M\times M$. We've essentially defined $j_i^\varepsilon$ to be the integral kernel for the straight line homotopy between $f\circ g$ and an $\varepsilon$-approximation to the identity. Let's check this; we need to prove that for all $i$,
\begin{equation} \int_y dj_i^\varepsilon \wedge \pi_2^*\alpha + j_{i+1}^\varepsilon\wedge \pi_2^*d\alpha + (k_i-\delta_i^\varepsilon) \wedge \pi_2^*\alpha=0.\label{eqn:homotopy}\end{equation}
Before proceeding with the proof of \cref{eqn:homotopy}, we record two observations.

\textbf{Observation 1:} $$\int_t v^* (j_{i+1}^\varepsilon)=0$$ Indeed,
\begin{align*}
\int_t v^*(j_{i+1}^\varepsilon) &= \int_t v^*(\int_{t'} v^*(k_{i+1}-\delta_{i+1}^\varepsilon)\\
&=\int_t \int_{t'} v^*(v^*(k_{i+1}-\delta^\varepsilon_{i+1}))
\end{align*}
The final term is a fiberwise integral for the trivial bundle $M\times M \times [1,\infty) \times [1,\infty) \to M\times M$. This term vanishes because $v\circ v : M\times M \times [1,\infty) \times [1,\infty)\to M\times M$ does not have full rank on the fibers $x \times y \times [1,\infty) \times [1,\infty)$.

\textbf{Observation 2:} Suppose $a\in \Omega^{i,n-i}(M\times M)$ is a smooth integral kernel supported in $\A$ and taking values in closed forms. For the purposes of this observation, when $i=n$, we consider an $n$-form to be closed if it integrates to zero, ie it is closed in the augmented de Rham complex. Then for any $i$-form $\alpha$, $$\lim_{T\to \infty} \int_y v_T^*(a)\wedge \pi_2^* \alpha = 0$$ and the limit converges uniformly over $M$. Thinking of $a$ like a matrix, each column is a closed $i$-form, which is Poincar\'{e} dual to an $(n-i)$-cycle. The effect of $v_T^*$ is to shrink this $(n-i)$-cycle down by a factor of $T$. Let's write this more formally. First, note that $v_T^*(a \wedge \alpha)$ is supported in a $\frac 1 T$ neighbourhood of the diagonal. Choose a point of interest $x_0\in M$. Let $\phi: \R^n \to M$ be a local coordinate patch with $\phi(0)=x_0$. Let $\phi_T(x') = \phi(\frac 1 T x')$ and define $\Phi_T: \R^n\times \R^n \to M\times M$  by $(x',y')\mapsto (\phi_T(x'),\phi_T(y'))$. $\Phi_T$ is zooming in near $x_0$ at a length scale of $\frac 1 T$. Near $x_0$ we have

\begin{align*}
\int_y v_T^* (a)\wedge \pi_2^* \alpha &= \int_{y\in B(\frac 1 T,x)} v_T^*(a \wedge \pi_2^* \alpha)\\
&= (\phi_{T}^{-1})^*\int_{y'} \Phi_T^*(v_T^*(a) \wedge \pi_2^* \alpha)\\
&= \frac 1 {T^i} (\phi_T^{-1})^* \int_{y'} \Phi_T^*(v_T^*(a)) \wedge T^i\Phi_T^*(\pi_2^*\alpha)
\end{align*}

As $T\to \infty$, $\Phi_T^* v_T^*(a)$ converges uniformly to a translation invariant integral kernel in $\Omega^{i,n-i}(\R^n \times \R^n)$ supported in a neighbourhood of the diagonal, and $T^i\Phi_T^*\pi_2^*\alpha$ converges uniformly to a translation invariant $i$-form. Therefore, $\lim_{T\to \infty} \int_{y'}\Phi^*(v^*_T(a)) \wedge T^i \Phi^*(\pi_2^*\alpha)$ is the average of all translates of some closed $i$-form on $\R^n$ of compact support. Such an average always vanishes. The derivative of $\frac 1 {T^i} (\phi_T^{-1})^*$ is $O(1)$, so we conclude that $$\lim_{T\to \infty}\int_y v_T^*(a) \wedge \pi_2^*\alpha \bigg|_{x=x_0}=0.$$ This completes the proof of our second observation.

Now let us return to the proof of \cref{eqn:homotopy}. Assume that we have already proven \cref{eqn:homotopy} for larger values of $i$. It will be notationally convenient to define $j^\varepsilon_{n+1}=0$.

For $i<n$ we have

\begin{align}\label{eqn:closed}
&d\int_y v_T^*(j_{i+1}^\varepsilon\wedge \pi_2^*d\alpha + (k_i-\delta^\varepsilon_i) \wedge \pi_2^*\alpha) \\&\qquad= \int_y dv_T^*(j_{i+1}^\varepsilon \wedge d\alpha + d(k_i-\delta_i) \wedge \pi_2^* \alpha )\nonumber\\
&\qquad= \int_y v_T^*( -j_{i+2} \wedge d^2 \pi_2^* \alpha - (k_{i+1}- \delta_{i+1}^\varepsilon) \wedge \pi_2^* d\alpha  + d(k_i -\delta_i^\varepsilon )\wedge \pi_2^* \alpha)\nonumber\\
&\qquad = 0\nonumber
\end{align}

In the third line we used the induction hypothesis and in the last step we used that $k_i$ and $\delta_i^\varepsilon$ are both chain maps.

For any $i$-form $\alpha$, we have
\begin{align*}
    d\int_y j_i^\varepsilon \wedge &\pi_2^*\alpha\\
    &= d\int_y \int_t v^*(k_i - \delta_i^\varepsilon)\wedge \pi_2^* \alpha \\
    &= d\int_y \int_t v^*( j^\varepsilon_{i+1}) \wedge \pi_2^* d\alpha + v^* (k_i - \delta_i^\varepsilon)\wedge \pi_2^* \alpha \\
    &= \int_yj_{i+1}^\varepsilon \wedge \pi_2^*d\alpha + (k_i  - \delta^\varepsilon_i) \wedge \pi_2^*\alpha + \lim_{T\to \infty} \int_y  v_T^*(j_{i+1}^\varepsilon \wedge \pi_2^*d\alpha + (k_i - \delta^\varepsilon_i) \wedge \pi_2^*\alpha)\\
    &\qquad+ \int_y\int_{t} v^*(dj_{i+1}^\varepsilon \wedge \pi_2^*d\alpha + d(k_i  - \delta^\varepsilon_i) \wedge \pi_2^*\alpha)\\
    &= \int_y j_{i+1}^\varepsilon \wedge \pi_2^*d\alpha + (k_i - \delta^\varepsilon_i) \wedge \pi_2^*\alpha + 0\\
    &\qquad + \int_y\int_{t} v^* (-j_{i+2}^\varepsilon \wedge d^2 \pi_2^*\alpha - (k_{i+1} - \delta^\varepsilon_{i+1}) \wedge \pi_2^*d\alpha + d(k_i -\delta^\varepsilon_i) \wedge \pi_2^*\alpha)\\
    &= \int_y j_{i+1}^\varepsilon \wedge \pi_2^*d\alpha + (k_i - \delta^\varepsilon_i) \wedge \pi_2^*\alpha
\end{align*}

In the second step we are using observation 1. In the third step we are using the fiber integration formula. In the fourth step, the term in the limit $T\to \infty$ vanishes thanks to observation 2. Observation 2 applies for $i<n$ by \cref{eqn:closed}, and it applies for $i=n$ by the fact that $k_n - \delta_n^\varepsilon$ is an integral kernel for an operator which kills the class $pt_M$. We also used the induction hypothesis here to replace the term $j_{i+1}^\varepsilon \wedge \pi_2^* d\alpha$. In the last step, we used the chain map equation for $k_i$ and $\delta_i^\varepsilon$.

\begin{remark}
There are two limits in play, one for taking the width of the delta function $\varepsilon \to 0$ and one for taking the integral over $t\in [0,T)$ as $T\to \infty$. There are two options. If you cut off at $T \gg 1/\varepsilon$, the boundary term $\int_y v^*_T(...)$ is negligible. But $\delta^\varepsilon_i \wedge \pi_2^*\alpha$ will contribute to $dj_i^\varepsilon \wedge \pi_2^*\alpha$. On the other hand, if you cut off at $T < 1/\varepsilon$, then you see a nonzero contribution at $t=T$. But $\delta^\varepsilon_i \wedge \pi_2^*\alpha$ doesn't contribute to $dj_i^\varepsilon \wedge \pi_2^*\alpha$. In this setup, we are taking the first option.
\end{remark}

Now let us estimate the norm of the integral transform associated with $j_i^{\varepsilon}$. We will independently bound the norms of its two components, $\int_t v^*(k_i)$ and $\int_t v^*(\delta_i^\varepsilon)$. First, we need to compute the norm of the derivative of $v$; note that $v$ magnifies the norm of any tangent $i$-vector in $\wedge^i T(M\times M \times [1,\infty))$ containing the $t$ direction by a factor of at most $t^{i-1}|x-y|$. (We will be integrating in the $t$ direction, so it is only norms on such $i$-vectors that matter.) Since $k_i$ is supported on a $\frac 1 {10}$ neighbourhood of the diagonal,  $v^*(k_i)$ is supported on the region $t < 1/|x-y|$. Similarly $\delta_i^\varepsilon$ is supported on an $\varepsilon$-neighbourhood of the diagonal, so $v^*(\delta_i^\varepsilon)$ is supported on the region $t < \varepsilon / |x-y|$ and $|x-y|<\varepsilon$. We can now estimate

\begin{align}\label{eqn:kbound}
|v^*(k_i)(x,y,t)|& \leq t^{i-1}|x-y| \sup_{M\times M} |k_i|\nonumber\\
\left|\int_t v^*(k_i)(x,y)\right| &\leq \int_{t=1}^{1/|x-y|}t^{i-1}|x-y|\sup_{M\times M}|k_i|\\
&\lesssim \frac 1{|x-y|^{i-1}}\nonumber
\end{align}

\Cref{eqn:kbound} says that the kernel $\int_{t} v^*(k_i)$ is not too singular since $i\leq n$. It is also supported in a radius 1 neighbourhood of the diagonal. Therefore, there is a uniform $L^1$ bound for $\left(\int_t v^*(k_i)\right)(x_0,-)$ and $\left(\int_t v^*(k_i)\right)(-,y_0)$ for any choice of $x_0,y_0\in M$.

Similarly,
\begin{align}\label{eqn:dbound}
|v^*(\delta_i^\varepsilon)(x,y,t)|& \leq t^{i-1} |x-y| \sup_{M\times M} |\delta_i^\varepsilon|\nonumber\\
\left|\int_t v^*(\delta^\varepsilon_i)(x,y)\right| &\leq \int_{t=1}^{\varepsilon/|x-y|}t^{i-1} |x-y|\varepsilon^{-n}\nonumber\\
&\leq \int_{t=1}^{\varepsilon/|x-y|}t^{i-2} \varepsilon^{-n+1}\nonumber\\
&\lesssim\frac{\varepsilon^{i-n}}{|x-y|^{i-1}}
\end{align}

so

\begin{align*}
  \left|\left(\int_t v^*(\delta_i^{\varepsilon})\right) (x_0,-)\right|_{L^1}&\lesssim \int_{y\in B(\varepsilon, x_0)} \frac{\varepsilon^{i-n}}{|x_0-y|^{i-1}}\\
  &\lesssim \varepsilon
\end{align*}
The same bound holds for $\left|\left(\int_t v^*(\delta_i^{\varepsilon})\right) (-,y_0)\right|_{L^1}$. Combining our bounds on $\int_tv^* k_i$ and $\int_t v^*\delta_i^\varepsilon$, we find that as $\varepsilon \to 0$, $j_i^\varepsilon(x_0,-)$ and $j_i^\varepsilon(-,y_0)$ both converge in $L^1$, uniformly over $x_0,y_0\in M$. Call the limiting integral kernel $j^0_i$. Now let $J_i$ be the integral transform associated with kernel $j^0_i$. Convergence of the kernels in $L^1$ implies that \cref{eqn:homotopy} continues to hold in the limit, ie $J$ is a homotopy between $f\circ g$ and the identity. Moreover, by Young's inequality, the upper bound on the $L^1$ norm of the kernel implies an upper bound on the $L^p$ norm of $J$. The upper bound depends only on $n$, $p$, and $K$. This completes our construction of $J$.

    Constructing the the chain homotopy $I$ is analogous, but much easier since it takes place in a PL setting. The operator $g\circ f: C_*(X) \to C_*(X)$ has the property that $g\circ f(c)$ is supported on simplices at distance at most $2$ from $c$. Recall that we assumed that our simplices had diameter at most $\frac 1 {100n}$. Since the injectivity radius of $M$ is at least 1, the radius $i/n$ balls around any point are embedded in $M$. For each $i$-cell $c$, let $N_0(c) \subset \dots \subset N_n(c)$ be simplicial approximations to the radius $i/n$ balls around $c$. Specifically, we need a nested family of balls such that $N_i(c)$ contains all the simplices at simplicial distance $< 10i$ from $c$ and $N_{i+1}(c)$ contains all simplices at simplicial distance 5 from $N_i(c)$. There is a lower bound on the Riemannian volume of any simplex, so the number of possibilities for the triangulation of $N_i(c)$ is bounded above in terms of $n$ and $K$. It follows that there is an upper bound on $h_j(N_i(c),\norm{\cdot}_p,\mathbb R)$ which is uniform in $i$, $j$, and $c$, and depends only on $p$, $n$ and $K$.
    
    Inductively define $I_i c$ so that
    \begin{itemize}
        \item $I_ic$ is a primitive for $ (1 - g\circ f - I_{i-1} \partial)c$
        \item $\norm{I_ic}_p \lesssim_{p,n,K} 1$.
        \item $I_i c$ is supported on $N_i(c)$.
    \end{itemize}

    Let's first do the base case $i=0$. For any 0-cell $c$, $g\circ f (c)$ is homologous to $c$ and supported on $N_0(c)$. Since $h_0(N_0(c),\norm{\cdot}_p,\mathbb R)$ is bounded above, we can find an efficient primitive for $c-f\circ g(c)$ supported on $N_0(c)$.

    Now suppose we want to extend the definition of $I$ to $i+1$-cells having already defined it for $i$-cells. The induction hypothesis guarantees that $(1-g\circ f -I_{i-1} \partial) c$ is an $i$-cycle whose $p$-norm is $\lesssim_{p,n,K} 1$. Moreover, it is supported on $\bigcup_{c'\in \partial c} N_i(c) \subset N_{i+1}(c)$. Therefore, we can again use the fact that $h_{i+1}(N_{i+1}(c), \norm{\cdot}_p,\R) \lesssim_{p,n,K} 1$ to find an efficient primitive which we declare to be $I_{i+1}c$. This completes our construction of $I$.
    \end{proof}

    Now we can we can prove a Cheeger type theorem relating the spectrum of the Hodge Laplacian to the Cheeger constants.
    \begin{theorem}\label{thm:cheeger}
Let $M$ be a Riemannian $n$-manifold with 1-bounded geometry (ie sectional curvatures less than one in absolute value and injectivity radius $>1$). Let $\lambda^i_1$ be the spectral gap of the Laplacian acting on coexact $i$-forms. Then

$$\frac 1 {C\sqrt{\vol(M)}} \left(\frac 1 {h_{i,\exact}(M,\norm{\cdot}_\mass)} -D\right) \leq \frac 1 {\sqrt{\lambda_1^i}} \leq C \sqrt{\vol(M)}\left(\frac 1 {h_{i,\exact}(M,\norm{\cdot}_\mass)}+D\right)$$

for constants $C$ and $D$ depending only on $n$.
\end{theorem}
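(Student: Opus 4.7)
The plan is to chain together three comparisons: the identification of the $L^2$ Cheeger constant with the Hodge spectrum (\cref{prop:l2easy1}), a Cauchy--Schwarz norm comparison between $L^1$ and $L^2$ on a simplicial complex (which costs a factor of $\sqrt{\vol(M)}$), and the simplicial-vs-smooth comparison \cref{prop:simplicial}.

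First, I fix a smooth triangulation $X$ of $M$ as in \cref{sec:comparison}, so that every simplex is uniformly bilipschitz to a unit regular simplex. Since $M$ has $1$-bounded geometry, the total number of $i$-simplices satisfies $N_i(X) \lesssim_n \vol(M)$. Next, the key elementary observation is the $L^1$--$L^2$ comparison on $X$: for an $i$-chain $\alpha$ supported on at most $N_i(X)$ simplices, Cauchy--Schwarz gives $\norm{\alpha}_2 \leq \norm{\alpha}_1 \leq \sqrt{N_i(X)}\,\norm{\alpha}_2$. Applying this to the $L^2$-minimizing filling $\beta^*$ of an exact $i$-cycle $\alpha$ yields
$$\frac{1}{h_{i,\exact}(X,\norm{\cdot}_1)} \leq \frac{\sqrt{N_{i+1}(X)}}{h_{i,\exact}(X,\norm{\cdot}_2)},$$
and the analogous reverse inequality holds with a factor $\sqrt{N_i(X)}$. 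Finally, \cref{prop:l2easy1} together with the Hodge decomposition identifies $h_{i,\exact}(M,\norm{\cdot}_2)^2$ with $\lambda_1^i$: boundaries in the chain complex correspond to coexact forms under the Riemannian inner product identification, so $\partial d$ acting on exact $i$-cycles shares its nonzero spectrum with $d^*d$ acting on coexact $i$-forms.

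Combining these three ingredients with \cref{prop:simplicial} (applied in both $L^1$ and $L^2$) gives
\begin{align*}
\frac{1}{h_{i,\exact}(M,\norm{\cdot}_\mass)} &\lesssim_n \frac{1}{h_{i,\exact}(X,\norm{\cdot}_1)} + 1 \\
&\lesssim_n \frac{\sqrt{\vol(M)}}{h_{i,\exact}(X,\norm{\cdot}_2)} + 1 \\
&\lesssim_n \frac{\sqrt{\vol(M)}}{h_{i,\exact}(M,\norm{\cdot}_2)} + \sqrt{\vol(M)} \\
&= \sqrt{\vol(M)}\cdot O\!\left(\frac{1}{\sqrt{\lambda_1^i}} + 1\right).
\end{align*}
Rearranging delivers the lower bound on $1/\sqrt{\lambda_1^i}$ claimed by the theorem, and running the reverse chain (starting from $1/\sqrt{\lambda_1^i} = 1/h_{i,\exact}(M,\norm{\cdot}_2)$ and going through $X$ in the $L^2\to L^1$ direction and then back to $M$) produces the matching upper bound.

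The only non-formal step is the $L^1$--$L^2$ comparison on $X$, which is completely elementary; the serious work has already been carried out in \cref{prop:simplicial}. The main bookkeeping obstacle is to verify that the additive constants produced by the simplicial comparison theorem interact correctly with the multiplicative factor $\sqrt{\vol(M)}$ so that the resulting inequality takes the symmetric form stated in the theorem. A small secondary point is making sure that the Hodge-theoretic identification uses the correct index: the exact $i$-cycles on which $\partial d$ has spectral gap $\lambda_1^i$ are Poincaré--dually the coexact $i$-forms, so no shift in $i$ is incurred.
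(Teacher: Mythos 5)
Your proposal follows the paper's proof essentially verbatim: triangulate $M$ with controlled geometry, pass between the Riemannian and simplicial Cheeger constants via \cref{prop:simplicial}, compare $L^1$ and $L^2$ Cheeger constants on the finite complex via Cauchy--Schwarz at a cost of $\sqrt{\vol(M)}$, and identify $h_{i,\exact}(M,\norm{\cdot}_2)$ with $1/\sqrt{\lambda_1^i}$ via the Hodge decomposition. One small nit: the identification of exact $i$-currents with coexact $i$-forms is not Poincar\'e duality (which would shift $i$ to $n-i$) but the $L^2$ inner-product identification of currents with forms under which $\partial$ corresponds to $d^*$; your conclusion that no index shift occurs is still right.
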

\begin{proof}
Let $X$ be the triangulation of $M$ with bounded geometry that we used above. We will use \cref{prop:simplicial} to relate $\lambda^i_1$ to the $L^2$ Cheeger constants of $X$ which in turn are bounded by $L^1$ Cheeger constants via Cauchy-Schwarz.

\begin{align*}
    \frac 1 {\sqrt{\lambda^i_1}} &= \frac 1 {h_{i,\exact}(M, \norm{\cdot}_2,\R)}\\
    &\leq c\frac 1 {h_{i,\exact}(X, \ltwo, \R)} + d & \text{by \cref{prop:simplicial}}\\
    &\leq c \sqrt{\vol X} \frac 1 {h_{i,\exact}(X, \lone, \R)} + d &\text{by Cauchy-Schwarz}\\
    &\leq c\sqrt {\vol X} \left(c' \frac 1 {h_{i,\exact}(M,\lone,\R)} + d' \right) + d &\text{by \cref{prop:simplicial}}\\
    &\leq C \sqrt{ \vol{M} }\left( \frac 1 {h_{i,\exact}(M,\norm{\cdot}_\mass,\R)} + D\right) &\text{for some large $C$, $D$}\\
\end{align*}

Similarly,
\begin{align*}
    \frac 1 {\sqrt{\lambda^i_1}} &= \frac 1 {h_{i,\exact}(M, \norm{\cdot}_2,\R)}\\
    &\geq \frac 1 c\frac 1 {h_{i,\exact}(X, \ltwo, \R)} - d  & \text{by \cref{prop:simplicial}}\\
    &\geq \frac{1}{c\sqrt {\vol X}} \frac 1 {h_{i,\exact}(X, \lone, \R)} - d  &\text{by Cauchy-Schwarz}\\
    &\geq \frac 1 {c\sqrt {\vol X}} \left(\frac 1 {c'} \frac 1 {h_{i,\exact}(M,\lone,\R)} - d' \right) - d &\text{by \cref{prop:simplicial}}\\
    &\geq \frac 1 {C \sqrt{ \vol M }}\left( \frac 1 {h_{i,\exact}(M,\norm{\cdot}_\mass,\R)} - D\right)&\text{for some large $C$, $D$}
\end{align*}
\end{proof}

\begin{remark}
    This kind of theorem is not new, see \cite{lipnowski_geometry_2018, boulanger_cheeger-like_2022, rudd_stable_2021} in the case of coexact 1-forms. The volume dependence is improved in our version, there is no dependence on the diameter of the manifold, and it works for all $i$-forms, so we hope this version will be useful in practice.
\end{remark}

We can now prove that \cref{cor:spectralrestatement} follows from \cref{thm:superpolytorsion}.
\begin{proof}
    Since $M$ has bounded geometry, there is a universal upper bound on $\lambda_1^1$ and $\lambda_0^1$.
    \begin{align*}
    h_0(M) &= \frac 2 {\diam M}\\
    h_1(M) &\geq \frac 1 {\frac{C\sqrt{\vol M}}{\sqrt{\lambda_1^1}} + D} &\text{by \cref{thm:cheeger}}\\
    &\geq \min\left(\frac {\sqrt {\lambda_1^1}} {C\sqrt{\vol M}}, 1/D\right)\\
    &\gtrsim \sqrt{\frac{\lambda_1^1}{\vol M}}\\
    h_2(M) &\geq \frac 1 {100}\min( \lambda_1^0, \sqrt{\lambda_1^0}) &\text{ by Buser's inequality}\\
    &\gtrsim \lambda_1^0\\
    \end{align*}
    Now plug into \cref{thm:superpolytorsion}.
\end{proof}

\section{Dehn surgery}\label{sec:dehn}
Our examples of expanders will be constructed using Dehn surgery. In this section, we explain how to put a metric of controlled volume and bounded geometry on the surgered manifold.

\begin{construction}[Dehn surgery]
Suppose $M$ is an oriented 3-manifold and $\gamma$ is a curve embedded in $M$. Let $N(\gamma)$ be a tubular neighbourhood of $\gamma$. A \emph{meridian} for $\gamma$, usually denoted $\mu$, is a simple closed curve in $\partial N(\gamma)$ which is contractible in $N(\gamma)$. The meridian is unique up to isotopy. A \emph{longitude}, usually denoted $\lambda$, is a simple closed curve in $\partial N(\gamma)$ which has intersection number 1 with a meridian. A \emph{framing} for $\gamma$ is a choice of a longitude. Given a framing for $\gamma$, the \emph{slope $q$} Dehn surgery is a 3-manifold denoted $M_\gamma(q)$ obtained by deleting $N(\gamma)$ and gluing in a new solid torus in such a way that the homology class $q\mu + \lambda$ is trivial in the new solid torus.
\end{construction}

\begin{construction}[Effective Dehn surgery]
Suppose $M$ is a Riemannian manifold with 1-bounded geometry. Suppose $\gamma$ is a closed curve in $M$ of length $\ell$ with geodesic curvature $\leq  1$ and a radius 1 embedded tubular neighbourhood. Call this tubular neighbourhood $N(\gamma)$. Suppose we choose a framing for $\gamma$ such that the longitude has a realization in $\partial N(\gamma)$ of length $\leq 10\ell+10$. The condition on its length simply prohibits longitudes that twist many times around $\gamma$. Then it is possible to perform a slope $q$ surgery such that metric on $M_\gamma(q)$ has 10-bounded geometry, agrees with $M$ outside of the tubular neighbourhood of $\gamma$, and the newly glued solid torus $T$ satisfies $\vol(T)\lesssim \poly(q,\ell)$ and $h_{1}(T,\partial T)\lesssim \poly(q,\ell)$.

To do this, first delete the tubular neighbourhood $N(\gamma)$ and add a collar of constant width so that the new boundary is totally geodesic and has the shape of a flat torus with dimensions $1\times \ell$, with the meridian $(1,0)$ and the longitude $(0,\ell)$. Call this metric $g_0$. 

Let $g_1$ be the pullback of $g_0$ by the action of $\begin{pmatrix} 1& q\\0& 1\end{pmatrix}$ on the torus. Let $g_t$ be the Teichmuller geodesic between $g_0$ and $g_1$, i.e. a 1-parameter family of metrics which are sheared parallel to the longitude. The length of this geodesic is about $q$. So the metric $g_t + dt^2$ on $T^2 \times [0,q]$ has bounded geometry and interpolates between $g_0$ on $T^2 \times 0$ and $g_1$ on $T^2 \times q$. Glue in the $T^2 \times [0,q]$ to our manifold, so that our manifold now has boundary isometric to $g_1$.

Finally, glue in a Euclidean solid torus whose boundary has shape $1\times \ell$. We can again do this with a small amount of smoothing at the boundary. This completes the Dehn surgery. The desired bounds $\vol(T)\lesssim \poly(q,\ell)$ and $h_{1}(T,\partial T) \lesssim \poly(q,\ell)$ follow from the fact that the metric on $T$ is $\poly(q,\ell)$-Lipschitz to a standard Euclidean solid torus of width 1 and height 1.
\end{construction}
As a warmup for the main argument, let us prove that large Dehn surgeries on links in $S^3$ have trivial rational homology in a hands on way.

\begin{prop}
    Suppose $L$ is a framed link in $S^3$. Perform simultaneous $q$ surgery on all the components of $L$. For large enough $q$, the result is a rational homology sphere.
\end{prop}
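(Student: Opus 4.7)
The plan is to compute $H_1(M_L(q); \mathbb{Q})$ explicitly via the standard linking--matrix presentation and observe that the matrix becomes invertible once $q$ is large.

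First I would decompose $M_L(q)$ as the union of the link exterior $E(L) = S^3 \setminus N(L)$ with the filling solid tori $T_1, \ldots, T_n$ and apply Mayer--Vietoris (over $\mathbb{Q}$). Since $H_1(S^3; \mathbb{Q})=0$, the Alexander duality / half--lives--half--dies computation gives $H_1(E(L); \mathbb{Q}) \cong \mathbb{Q}^n$, generated by the meridians $\mu_1, \ldots, \mu_n$. Gluing in the solid tori kills the classes of the curves $q\mu_i + \lambda_i$, one for each component. So $H_1(M_L(q);\mathbb{Q})$ is the quotient of $\mathbb{Q}^n$ by the span of these $n$ classes.

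Next I would express each $\lambda_i$ in terms of the $\mu_j$. With respect to a fixed framing of framing coefficient $f_i$ for each component $L_i$, one has the standard identity in $H_1(E(L); \mathbb{Q})$:
\[
\lambda_i = f_i \mu_i + \sum_{j \neq i} \mathrm{lk}(L_i, L_j)\, \mu_j.
\]
Therefore $H_1(M_L(q);\mathbb{Q})$ is presented by the $n \times n$ matrix $A(q)$ whose diagonal entries are $A(q)_{ii} = q + f_i$ and whose off--diagonal entries are $A(q)_{ij} = \mathrm{lk}(L_i, L_j)$ for $i \neq j$.

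Finally I would invoke strict diagonal dominance: as soon as
\[
q > \max_{1 \le i \le n}\Bigl(|f_i| + \sum_{j \neq i} |\mathrm{lk}(L_i, L_j)|\Bigr),
\]
each row of $A(q)$ has $|A(q)_{ii}| > \sum_{j \neq i} |A(q)_{ij}|$, so $A(q)$ is invertible over $\mathbb{Q}$ by the Levy--Desplanques theorem (equivalently, Gershgorin's disks avoid $0$). Hence $H_1(M_L(q);\mathbb{Q}) = 0$, and Poincar\'e duality gives $H_2(M_L(q);\mathbb{Q})=0$ as well, so $M_L(q)$ is a rational homology sphere. There is no substantive obstacle in this argument; the only care required is bookkeeping between the chosen framing and the slope $q$, which affects only the constant threshold on $q$ and not the asymptotic conclusion.
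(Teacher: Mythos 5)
Your proof is correct and takes essentially the same route as the paper: both observe that the linking matrix (shifted by $q$ on the diagonal) presents $H_1$ of the surgered manifold, and that strict diagonal dominance for large $q$ forces the determinant to be nonzero, hence $H_1(M_L(q);\mathbb{Q})=0$. The paper then re-derives the same fact in a ``hands-on'' way by showing any loop is rationally homologous to a $1$-cycle of strictly smaller $\ell^1$-norm in the meridian basis; that reformulation is not needed for the proposition itself but serves as a template for the more involved argument in \cref{sec:promotecomplexes}, where the ambient manifold is a connect sum of $S^1\times S^2$'s and the linking-matrix shortcut is no longer directly available.
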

\begin{proof}
The linking matrix for $L$ is a presentation matrix for the first homology group of the Dehn surgery. For large enough $q$, the matrix is diagonally dominant, and therefore has nonzero determinant. Let's say the same thing in a more hands on way.

Let $\mu_1 \dots \mu_n$ be meridians for the components of $L$ and let $\lambda_1 \dots \lambda_n$ be our choices of longitudes. This means that each longitude $\lambda_i$ spans a punctured surface $\S_i$ in $S^3 \setminus L$ giving rise to the equation $\lambda_i \cong \sum_{j} lk(L_i, L_j) \mu_j$ in $H_1(S^3\setminus L)$. Here, $lk(L_i,L_i)$ should be interpreted as $lk(\lambda_i, L_i)$. Suppose $\gamma$ is a loop in $S^3 \setminus L$. Any such loop is homologous to a linear combination of meridians, $\sum_i a_i \mu_i$. In the Dehn surgered manifold $S^3_L(q)$, we have 

\begin{align}
\gamma &\cong \sum_i a_i \mu_i\\
&\cong \sum_i \frac{a_i}{q} \lambda_i\\
&\cong \sum_{i,j} \frac {a_i} q lk(L_i,L_j)\mu_j
\end{align}

For $q$ sufficiently large, we have $$\sum_{i,j} \left|\frac {a_i} q lk(L_i,L_j)\right| \leq \frac 1 2 \sum_i |a_i|.$$ So we have showed that every loop $\gamma$ is rationally homologous to a 1-cycle whose $L^1$ norm with respect to the basis $\mu_1 \dots \mu_n$ for $H_1(S^3\setminus K)$ is less than $\frac 1 2$ the $L^1$ norm of $\gamma$ with respect to the same basis. It follows that $\gamma$ is rationally null-homologous.
\end{proof}
The proof of the main theorem will follow a similar argument for a link in a connect sum of several $S^1 \times S^2$. A new difficulty in this setting is that the longitudes will not bound punctured surfaces as is the case for a link in $S^3$; we will show that the existence of some substitute surfaces for $\mathcal S_i$ is guaranteed by our assumptions on the coexpansion properties of an associated 2-complex.
\section{Promoting 2-complexes to 3-manifolds}\label{sec:promotecomplexes}

In this section, we construct 3-manifolds which are good expanders. We deduce \cref{thm:0} as \cref{cor:l2} and \cref{cor:l1} of the following theorem:

\begin{theorem}\label{thm:1}
    Suppose $X$ is a polyhedral 2-complex with local degree $k$.  Then there is a rational homology 3-sphere $\M(X,q)$ with a metric of $1$-bounded geometry such that the Cheeger constants of $\M(X,q)$ are controlled by Cheeger constants of $X$ as follows. Let $1\leq p \leq \infty$. Suppose 
    
    $$\max\left(\frac 1{h_0(X,\norm{\cdot}_p)}, \frac 1 {h_1(X,\norm{\cdot}_p)},\frac 1 {h^1(X,\norm{\cdot}_p)},\frac 1{\hprime(X,\norm{\cdot}_p)}\right)< C$$
    Then 
    
    $$\frac 1 {h_i(\M(X,q),\norm{\cdot}_p)} \leq \poly(\deg, C)$$
    for $i=0,1,2$.
\end{theorem}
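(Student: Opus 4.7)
The plan is to build $\M(X,q)$ as the boundary of a 4-dimensional handlebody: each $i$-cell of $X$ becomes an $i$-handle, with every 2-handle attached using framing $q$. Equivalently, I thicken $X^{(1)}$ to a handlebody $H$ (each vertex a unit ball, each edge a thin tube), double to form $M_0:=2H\cong\#_g(S^1\times S^2)$, and for each 2-cell $f$ perform slope-$q$ Dehn surgery along the attaching loop $\gamma_f\subset \partial H$, framed by its embedding in $\partial H$. The effective Dehn surgery construction of \cref{sec:dehn} then equips $\M(X,q)$ with a metric of $1$-bounded geometry and $\vol\lesssim\poly(\deg,q)|X|$.

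To confirm $\M(X,q)$ is a rational homology sphere I would follow the warmup proposition at the end of \cref{sec:dehn}. The new wrinkle, already advertised there, is that in general a longitude $\lambda_f$ need not bound a Seifert surface inside $M_0\setminus L$. The coexpansion hypothesis $h^1(X)>0$ forces $H^1(X,\Q)=0$ and hence $H_1(X,\Q)=0$, and in fact provides explicit rational 2-chains in $X$ which, thickened into $M_0\setminus L$, serve as substitute Seifert surfaces. With those in hand one writes $\lambda_f\equiv d_X f+\sum_{f'}lk(\gamma_f,\gamma_{f'})\mu_{f'}$; for $q$ larger than a polynomial in $\deg$ and $C$ the resulting system is diagonally dominant, and the cokernel presenting $H_1(\M,\Q)$ collapses via $H_1(X,\Q)=0$ to zero.

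I then bound each Cheeger constant by translating chains and cochains between $\M$ and $X$ via the correspondence between vertex balls, edge tubes, and surgery tori on one side and cells of $X$ on the other. The bound $h_0(\M)\gtrsim h_0(X)$ is immediate: geodesic paths in the graph $X^{(1)}$ lift to paths in $\M$ of comparable length. For $h^1(\M)$ (equivalent to $h_2(\M)$ by Poincar\'e duality), averaging a 1-cocycle $\omega$ along the edge tubes yields a cocycle $\omega_X$ on $X$ of comparable norm; $h^1(X)$ supplies a 0-cochain primitive, which lifts back to a function on $\M$ whose coboundary matches $\omega$ up to an error of norm $\lesssim 1/q$ concentrated in the surgery tori, and \cref{lem:expfill} iterates it away.

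The hardest bound is $h_1(\M)$. Given a 1-cycle $c\subset \M$, I would isotope it to a 1-cycle $c_X$ supported in the core $X^{(1)}\subset H$ of comparable norm. Use $h_1(X)$ to produce a 2-chain $\sigma_X\in C_2(X,\Q)$ with $d_X\sigma_X=c_X$ and $\norm{\sigma_X}\lesssim \norm c/h_1(X)$. Realize $\sigma_X$ in $\M$ as a 2-current $\widetilde\sigma$ obtained by pushing each 2-cell of $X$ onto $\partial H$ together with a $1/q$-scaled meridian disk of the corresponding surgery torus. Its boundary differs from $c$ by a residual 1-cycle of norm $\lesssim \norm c/(qh_1(X))$ supported near the surgery tori, so \cref{lem:expfill} converts the construction into a full filling. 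The invariant $\hprime(X)$ enters when the residual, viewed through $X$, is a 2-chain which is neither closed nor coexact: the decomposition $\alpha=d\beta+\gamma$ afforded by $\hprime(X)$ splits the residual into a piece removable by $h^1(X)$-cofilling and a closed piece that can be absorbed into the next iteration. Arranging this iteration so that every step contracts by a fixed factor strictly less than one, by balancing all four Cheeger invariants of $X$ against the $1/q$ gain from the surgery, will be the main obstacle.
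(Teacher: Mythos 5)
Your construction is topologically identical to the paper's: the double $2H$ of the thickened $X^{(1)}$ is the same manifold $\M(X)$ the paper builds as a union of vertex $3$-spheres joined by $S^2\times I$ edge tubes (both are $\#_{E-V+1}(S^1\times S^2)$), and your framed $q$-surgeries on the 2-cell curves coincide with the paper's Dehn surgeries. Your outlines for $h_0$ and for $h^1$ (hence $h_2$) parallel the paper's use of the Leray--Serre-type \cref{lem:lerayserre} on the projection to $X^{(1)}$, though you also need the simplicial/Riemannian comparison \cref{prop:simplicial} to pass between currents on $\M$ and chains on a bounded-geometry triangulation; your ``$1/q$ error concentrated in the surgery tori'' for $h^1$ isn't quite the right picture, since what matters for Leray--Serre is that fibers over vertices of $X$ have size $\poly(\deg,q)$.

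The genuine gap is exactly the one you flag at the end. Your one-pass bound $\norm{c'}\lesssim \norm c/(q\,h_1(X))$ for the residual is not justified, and it is not how the $1/q$ gain actually materializes. The surgery disk in $T(f)$ bounds $q\mu(f)+\lambda(f)$, so transporting $c$ across these disks using the $h_1(X)$-filling $\sigma_X$ trades the 1-cycle for a linear combination of meridians whose norm is \emph{larger} than $\norm c$ by a factor $\poly(q)/h_1(X)$ (this is \cref{lem:fillcyc} in the paper: no contraction yet). The contraction comes only at the next stage, \cref{lem:fillmerid}: given $\sum_i z_i\mu(f_i)$, apply $\hprime(X)$ to decompose the associated 2-chain $\sum z_i f_i = dx + y$ with $y$ closed. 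The coexact piece $dx$ is absorbed at $\poly(\deg)$ cost by cross-section surfaces of the edge tubes (the paper's ``type 3'' surfaces) --- this is where $\hprime$ is used, not where $h^1(X)$ is used. The closed piece $y$ is then pushed through the surgery disks and emerges as $\tfrac1q\sum y_i\lambda_i$, which is null-homologous in $\M(X)$ precisely because $y$ is closed, and \cref{lem:nullhom} trades it for a strictly smaller linear combination of meridians. It is only this trade that produces the factor $1/q$ per iteration, and the strict contraction follows by choosing $q=\poly(\deg,C)$ large. Your proposal collapses these two lemmas into one step and consequently misattributes where $\hprime$ enters and never exhibits the contraction; the paper's two-lemma decomposition is the missing ingredient.
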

Note that this theorem requires $X$ to have both good expansion and good coexpansion. While expansion and coexpansion are the same for manifolds by Poincar\'e duality, the same is not true for 2-complexes.

\begin{corollary}\label{cor:l2}
    There exists a sequence of rational homology spheres $M_i$ with metrics of 1-bounded geometry, $\vol(M_i)\to \infty$, and uniform lower bounds on the spectral gap for the Hodge Laplacian on $j$-forms for $j=0,1,2$.
\end{corollary}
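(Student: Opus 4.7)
The plan is to reduce \cref{cor:l2} to \cref{thm:1} plus \cref{cor:fullspectrum}. Explicitly, I would exhibit a sequence of polyhedral 2-complexes $X_i$ with uniformly bounded local degree, $\vol(X_i)\to\infty$, and uniform upper bounds on the four inverse Cheeger constants appearing in the hypothesis of \cref{thm:1} (with $p=2$), feed them into \cref{thm:1} to obtain rational homology 3-spheres $M_i := \M(X_i,q)$ with uniform lower bounds on $h_j(M_i,\ltwo)$ for $j=0,1,2$, and finally translate these bounds into spectral gap bounds via \cref{cor:fullspectrum}.

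For the input sequence $X_i$, I would use a family of bounded-degree Ramanujan 2-complexes (for example, arithmetic quotients of the Bruhat--Tits building of $\mathrm{PGL}_3$ over a nonarchimedean local field) satisfying $H_1(X_i,\R)=H_2(X_i,\R)=0$ and possessing a uniform spectral gap for the Hodge Laplacian on 0-chains and on 1-chains. By \cref{prop:l2easy1} and \cref{prop:l2easy0}, the vanishing of $H_1$ and $H_2$ together with these spectral gaps yield uniform upper bounds on $1/h_0(X_i,\ltwo)$, $1/h_1(X_i,\ltwo)$, and $1/h^1(X_i,\ltwo)$. The remaining hypothesis on $1/\hprime(X_i,\ltwo)$ is controlled by $1/h^2(X_i,\ltwo)+1$ via \cref{prop:l2easy}, and $1/h^2(X_i,\ltwo)=1/h_1(X_i,\ltwo)$ by \cref{prop:l2easy0}; hence all four hypotheses of \cref{thm:1} hold uniformly.

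Applying \cref{thm:1} with $p=2$ then produces a sequence of rational homology 3-spheres $M_i$ of 1-bounded geometry with $\vol(M_i)\to\infty$ (inherited from $\vol(X_i)\to\infty$ through the construction) and $1/h_j(M_i,\ltwo)=O(1)$ for $j=0,1,2$. Poincaré duality on the 3-manifold gives $h^{3-j}(M_i,\ltwo)=h_j(M_i,\ltwo)$, so $h^1(M_i,\ltwo)$ and $h^2(M_i,\ltwo)$ are also uniformly bounded below. Since $H_j(M_i,\R)=0$ for $j=1,2$, \cref{cor:fullspectrum} applies: the spectral gap of the Hodge Laplacian on $j$-forms equals $\min(h_j(M_i,\ltwo),h^j(M_i,\ltwo))^2$ for $j=1,2$, and equals $h_0(M_i,\ltwo)^2$ for $j=0$ (augmenting so that the constant functions are removed). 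Each of these is uniformly bounded below.

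The main work is the input: producing a sequence of 2-complexes $X_i$ with simultaneously good expansion \emph{and} coexpansion in dimension 1. These two properties are not forced by bounded local degree alone, but both are built into the Ramanujan property, which is the nontrivial ingredient. Once the $X_i$ are in hand, the rest of the argument is bookkeeping: combine \cref{thm:1}, Poincaré duality, and \cref{cor:fullspectrum}.
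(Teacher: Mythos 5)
Your proposal is correct and follows essentially the same route as the paper: take $L^2$ expanding and coexpanding 2-complexes arising from quotients of rank-2 Bruhat--Tits buildings (Garland's method / Ramanujan complexes), verify the four hypotheses of \cref{thm:1} using \cref{prop:l2easy1}, \cref{prop:l2easy0}, and \cref{prop:l2easy}, apply \cref{thm:1} with $p=2$, and translate the resulting $L^2$ Cheeger bounds into Hodge Laplacian spectral gaps via \cref{cor:fullspectrum}. You are slightly more careful than the paper's stated proof in explicitly checking the $\hprime$ hypothesis (via $\hprime \ge h^2/(1+h^2)$ and $h^2 = h_1$), which the paper glosses over; otherwise the arguments are identical.
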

\begin{proof}
    There exists a sequence $X_i$ of bounded degree 2-complexes with uniform lower bounds on $h_j(X_i,\norm{\cdot}_2)$ for all $j$. For example, quotients of some rank 2 Bruhat-Tits buildings are known to satisfy such bounds by Garland's method \cite{garland_p-adic_1973}. See \cite[Section 2.3]{lubotzky_high_2017} for a discussion of this construction in language closer to ours. As noted in \cref{prop:l2easy0}, control on $h_j(M_i, \norm{\cdot}_2)$ is the same as control on $h^{j+1}(X_i, \norm{\cdot}_2)$. Applying \cref{thm:1} to this sequence gives a sequence of rational homology spheres with all $L^2$ Cheeger constants uniformly bounded below. This is equivalent to control on the spectral gap of the Hodge Laplacian, as noted in \cref{prop:l2easy1}.
\end{proof}

The examples involving Bruhat-Tits buildings are not easy to understand. However, there are much simpler constructions that already give $L^1$ Cheeger constants of order $\frac 1 {\polylog(\vol(M_i))}$. The 2-skeleton of a $k$-dimensional hypercube has good expansion and coexpansion, as we now verify:

\begin{prop}
    Let $\mathcal{H}_{2,\deg}$ be the 2-skeleton of the $\deg$-dimensional hypercube. Then $\hprime(\mathcal{H}_{2,\deg},\norm\cdot_1) \gtrsim \frac 1 {\deg^2}$ and $h_1(\mathcal{H}_{2,\deg},\norm{\cdot}_1)\gtrsim \frac 1 \deg$. 
\end{prop}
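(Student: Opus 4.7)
My plan is to prove both inequalities via explicit canonical-path cone constructions, exploiting the product structure of the hypercube for good $L^1$ operator norms.

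\textbf{Bound on $h_1$.} Fix a total order on the $k$ coordinates. For each vertex $v \in \{0,1\}^k$, let $P(v)$ denote the canonical monotone path from the origin to $v$ that flips the coordinates in the support of $v$ in increasing order. For each edge $e = (u, u+e_i)$ with $u_i = 0$, the 1-cycle $P(u) + e - P(u+e_i)$ bounds a canonical 2-chain $F(e)$ consisting of one ``commutation square'' in direction $\{i,j\}$ for each $j > i$ with $u_j = 1$; hence $\norm{F(e)}_1 \leq k$. Extending linearly defines an operator $C : C_1 \to C_2$ satisfying $\partial C(\alpha) = \alpha - L(\partial \alpha)$, where $L : C_0 \to C_1$ is defined by $L(v) := P(v)$. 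For a 1-cycle $\alpha$ this yields $\partial C(\alpha) = \alpha$, so $C(\alpha)$ is a filling with $\norm{C(\alpha)}_1 \leq k \norm{\alpha}_1$, giving $h_1(\mathcal{H}_{2,k},\norm{\cdot}_1) \geq 1/k$.

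\textbf{Bound on $\hprime$.} Given $\alpha \in C_2$, the boundary $\eta := \partial \alpha$ is a 1-cycle, and the $h_1$ construction produces $\mu := C(\eta)$ with $\partial \mu = \eta$ and $\norm{\mu}_1 \leq 4k \norm{\alpha}_1$; hence $\alpha - \mu$ is closed. It remains to decompose $\mu = d\beta + \gamma_1$ with $\gamma_1$ closed and $\norm{\beta}_1 + \norm{\gamma_1}_1 \lesssim k \norm{\mu}_1$, so that $\alpha = d\beta + (\gamma_1 + \alpha - \mu)$ gives the desired $O(k^2)$ bound. By linearity and the triangle inequality, it suffices to produce, for each 2-face $f$, a decomposition $f = d\beta_f + \gamma_f$ with $\gamma_f$ closed and $\norm{\beta_f}_1 + \norm{\gamma_f}_1 \lesssim k$; then $\beta := \sum_f \mu(f)\beta_f$ and $\gamma_1 := \sum_f \mu(f)\gamma_f$ do the job.

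\textbf{Face-wise decomposition and main obstacle.} To build $\beta_f$ and $\gamma_f$, I plan to work in the full contractible cube $I^k$, which admits a chain-level contracting homotopy $S : C_i(I^k) \to C_{i+1}(I^k)$ with $\partial S + S \partial = \mathrm{id}$; constructing $S$ inductively along the product $I^k = I \times I^{k-1}$ with the single interval's Poincar\'e primitive as base case yields $L^1$ operator norm $O(k)$ at each level. For a 2-face $f$ of $\mathcal{H}_{2,k}$, the identity $f = \partial S(f) + S(\partial f)$ provides a closed 2-chain $\partial S(f)$ (a boundary of a 3-chain) and a 2-chain $S(\partial f)$ supported in the 2-skeleton. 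The main technical obstacle is choosing $S$ so that $S|_{C_1}$ lands in $\mathrm{im}(d : C_1 \to C_2)$, which would make $S(\partial f) = d\beta_f$ for a 1-chain $\beta_f$ with $\norm{\beta_f}_1 = O(k)$; this can be arranged by projecting the naive homotopy onto $\mathrm{im}(d)$ and absorbing the resulting error into a closed correction term added to $\gamma_f := \partial S(f)$. The factor $k^2$ in the final bound then comes from combining the $O(k)$ from filling $\partial \alpha$ with the $O(k)$ from the face-wise decomposition.
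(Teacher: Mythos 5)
Your $h_1$ argument is correct and takes a different, more explicit route than the paper's: you build a cone operator from monotone canonical paths, giving each edge a filling $F(e)$ of $L^1$ norm at most $\deg$ (a staircase of commutation squares), whereas the paper argues greedily (pigeonhole in a window of $\deg+1$ letters to find a cancellable pair, then the geometric-series filling lemma). Both are valid and give $h_1 \gtrsim 1/\deg$.

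The $\hprime$ argument, however, has a genuine gap. You reduce to producing, for each individual 2-face $f$, a decomposition $f = d\beta_f + \gamma_f$ with $\partial\gamma_f = 0$ and $\norm{\beta_f}_1 + \norm{\gamma_f}_1 \lesssim \deg$. Note first that if such a face-wise decomposition existed it would immediately give $\hprime \gtrsim 1/\deg$ by linearity, so your preliminary step of filling $\partial\alpha$ is redundant and your accounting for the factor $\deg^2$ does not reflect where the bound would actually come from. More seriously, you never actually produce $\beta_f$ and $\gamma_f$. You write $f = \partial S(f) + S(\partial f)$ via a cone operator $S$ on $I^\deg$; while $\partial S(f)$ is closed, there is no reason for $S(\partial f)$ to lie in $\mathrm{im}(d\colon C_1\to C_2)$, and you acknowledge this. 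Your proposed fix --- projecting onto $\mathrm{im}(d)$ and absorbing the error --- is circular: the projection in question is the Hodge projection $C_2 \to \mathrm{im}(d)$, which is orthogonal (hence $L^2$-bounded) but whose $L^1$ operator norm, together with the $L^1$ norm of an efficient primitive for the coexact part, is precisely what $\hprime$ measures; you cannot assume it is $O(\deg)$. The paper sidesteps this by passing to the 3-skeleton $\mathcal{H}_{3,\deg}$ and using the explicit combinatorial identity $\Delta c = (\deg+2)c - \sum_{c' \parallel c} c'$ for 2-chains; rearranging yields an explicit split of $c$ into a coexact piece $\tfrac{1}{\deg+2}d\partial c$, a closed piece $\tfrac{1}{\deg+2}\partial d c$, and a remainder of $L^1$ norm $\tfrac{\deg-2}{\deg+2} < 1$, which is then iterated away geometrically. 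That explicit Laplacian identity, not an abstract cone, is the missing ingredient your construction does not supply.
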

\begin{proof}
    Let's first check that $h_1(\mathcal H_{2,\deg})\gtrsim \frac 1 \deg$. A 1-cycle of length $\ell$ passing through the origin can be described by a word of length $\ell$ on an alphabet of size $\deg$. We would like to find an efficent null-homotopy of this 1-cycle. A homotopy of this 1-cycle across a 2-cell corresponds with commuting two of the letters. Another kind of homotopy is the cancelling of a pair of adjacent letters which are the same. By the pigeonhole principle, there are two identical letters at distance $\deg$ in the word. So we can commute these letters to sit beside each other and cancel them for a cost of $\deg$. Therefore we have made progress in decreasing the norm of $c$, at a cost of $\deg$ elementary homotopies. So a 1-cycle of length $\ell$ can be contracted to a point in $\lesssim \deg \ell$ moves.

    Now let's check that $\hprime(\mathcal{H}_{2,\deg}) \gtrsim \frac 1 {\deg^2}$. For this part, it will be more convenient to work in the 3-skeleton of the hypercube, $\mathcal{H}_{3,\deg}$. This doesn't change $\hprime$. It suffices to show that any 2-cell $c$ has an efficient decomposition into a coexact and a closed piece. Define the discrete Laplacian $\Delta =d \partial + \partial d$. Consider its action on 2-chains of $\mathcal{H}_{3,\deg}$. Observe that $$\Delta c = (\deg+2)c -\sum_{c'||c} c',$$ where $c'||c$ means that $c'$ is parallel to $c$ and at distance 1 from $c$.

    Rearranging, we have
    
    \begin{align}
        c &= \frac 1 {\deg+2} (d \partial c + \partial d c) + \frac 1 {\deg+2} \sum_{c'||c} c'
    \end{align}
    Note that $\norm{\partial c}_1 = 4$ and $\norm{\partial d c}_1 \leq 6(\deg-2)$. So the first term above is already expressed as an efficient decomposition into a coexact and a closed piece. It remains to find an efficient decomposition of the last term. The last term is a 2-chain of norm $\leq \frac {\deg-2} {\deg+2}<1$, so we have made progress in decreasing the norm of $c$, for a cost of $\lesssim \deg$. Recursively repeating this procedure on the leftover 2-chain (just as in \cref{lem:expfill}), we get a sequence converging to the desired efficient decomposition of $c$. The size of the terms in the decomposition is $O(k^2)$, as desired.
\end{proof}

\begin{corollary}\label{cor:l1}
    There exists a sequence of rational homology spheres $M_i$ with metrics of 1-bounded geometry and $1/h_j(M_i, \norm{\cdot}_\mass, \R) \leq \polylog{\vol(M_i)}$ for $j=0,1,2$.
\end{corollary}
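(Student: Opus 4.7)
The plan is to apply \cref{thm:1} to the sequence of hypercube 2-skeleta $X_k = \mathcal{H}_{2,k}$, for a single fixed sufficiently large slope $q$, so that the manifolds $M_k := \M(X_k, q)$ form the desired sequence of rational homology 3-spheres.

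To invoke \cref{thm:1}, I need polynomial-in-$k$ upper bounds on all four of $1/h_0$, $1/h_1$, $1/h^1$, and $1/\hprime$ for $X_k$ in the $L^1$ norm. The preceding proposition already provides $1/h_1(X_k, \norm{\cdot}_1) \lesssim k$ and $1/\hprime(X_k, \norm{\cdot}_1) \lesssim k^2$. The bound $1/h_0(X_k, \norm{\cdot}_1) = \diam(X_k)/2 = k/2$ is immediate from the bulleted identification of $h_0$ with inverse diameter in \cref{sec:cheeger}. For the remaining quantity $h^1$, I would invoke Harper's edge-isoperimetric inequality on the discrete cube $\{0,1\}^k$, which gives $|\partial S| \geq \min(|S|, 2^k - |S|)$ for every vertex subset $S$. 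A standard coarea argument promotes this to a uniform lower bound on $\norm{d\beta}_1 / \norm{\beta}_1$ for all $\beta$ modulo constants, yielding $1/h^1(\mathcal{H}_{1,k}, \norm{\cdot}_1) = O(1)$. Since the 2-skeleton has the same 0- and 1-cells as the 1-skeleton, passing to $\mathcal{H}_{2,k}$ can only shrink the space of closed 1-cochains, so the bound transfers.

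Plugging these four polynomial-in-$k$ bounds into \cref{thm:1} yields $1/h_j(M_k, \norm{\cdot}_\mass, \R) \lesssim \poly(k)$ for $j = 0, 1, 2$. To convert $\poly(k)$ into $\polylog(\vol(M_k))$, I would use the volume estimate implicit in the construction: $X_k$ has $2^k \binom{k}{2} \asymp 2^k k^2$ two-cells, and because $\M(X,q)$ carries a metric of $1$-bounded geometry obtained by attaching bounded-geometry pieces of volume $\poly(q)$ to each cell (as in the Dehn-surgery construction of \cref{sec:dehn}), one has $\vol(M_k) \lesssim 2^k \poly(k)$ for $q$ fixed. Hence $k \asymp \log \vol(M_k)$ and $\poly(k) = \polylog(\vol(M_k))$, as required.

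The only genuinely non-mechanical step is the $L^1$ coexpansion bound $1/h^1(X_k, \norm{\cdot}_1) = O(1)$, which is really an isoperimetric inequality for the hypercube and the place where the choice of hypercube (rather than some other naive 2-complex) really pays off; everything else is a direct substitution into the promotion theorem together with a volume count.
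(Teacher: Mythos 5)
Your proposal is correct and follows essentially the same route as the paper: apply \cref{thm:1} to the hypercube 2-skeleta, use \cref{prop:realrational}'s comparability of $\lone$ and $\norm{\cdot}_\mass$, and then count cells to convert $\poly(k)$ into $\polylog(\vol)$. You are in fact more thorough than the paper's own one-line proof, which only records the $h_1$ and $\hprime$ bounds for $\mathcal H_{2,k}$ and tacitly leaves the $h_0$, $h^1$, and volume bookkeeping to the reader.

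Two small points of imprecision, neither of which breaks the argument. First, you write $1/h^1(\mathcal H_{1,k},\lone)=O(1)$, but for any graph every $1$-cochain is closed and $H^1$ is large, so $h^1$ of the $1$-skeleton is literally $0$; what Harper's inequality controls is $h^1_{\coexact}(\mathcal H_{1,k})$, i.e.\ the usual Cheeger constant. The transfer to $\mathcal H_{2,k}$ then works because $H^1(\mathcal H_{2,k})=0$ implies $h^1(\mathcal H_{2,k})=h^1_{\coexact}(\mathcal H_{2,k})=h^1_{\coexact}(\mathcal H_{1,k})$ (the $0$-cells, $1$-cells, and $d:C^0\to C^1$ are identical for the two skeleta); the ``shrinking the space of closed cochains'' phrasing on its own would not suffice since the smaller infimum you are comparing against is $0$. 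Second, the slope $q$ cannot be taken fixed: the proof of \cref{thm:1} chooses $q$ to be a sufficiently high-degree polynomial in $\deg$ and $C$, and for $X_k=\mathcal H_{2,k}$ both $\deg\asymp k^2$ and $C\asymp k^2$ grow with $k$, so $q$ must grow as $\poly(k)$. This does not hurt your volume estimate, since $\vol(M_k)\lesssim\poly(q,\deg)\,|X_k|\lesssim\poly(k)\,2^k$ still gives $k\asymp\log\vol(M_k)$.
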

\begin{proof}
    Apply \cref{thm:1} to the 2-skeleton of a $k$-dimensional hypercube. This gives a 3-manifold with the desired control on $1/h_j(M_i, \norm{\cdot}_1,\R)$. Since $\norm{\cdot}_1$ and $\norm{\cdot}_\mass$ are comparable, we also get control on $1/h_j(M_i, \norm{\cdot}_\mass,\R)$
\end{proof}

\begin{construction}
First we explain the construction topologically; later we will choose the Riemannian metric. Given a polyhedral 2-complex $X$ and an integer $q$, we construct a closed 3-manifold $\M(X,q)$. Along the way, we will construct an intermediate 3-manifold $\M(X)$. 
\begin{figure}[h]
    \centering
    %% Creator: Inkscape 1.1.2 (0a00cf5339, 2022-02-04), www.inkscape.org
%% PDF/EPS/PS + LaTeX output extension by Johan Engelen, 2010
%% Accompanies image file 'M(X).pdf' (pdf, eps, ps)
%%
%% To include the image in your LaTeX document, write
%%   \input{<filename>.pdf_tex}
%%  instead of
%%   \includegraphics{<filename>.pdf}
%% To scale the image, write
%%   \def\svgwidth{<desired width>}
%%   \input{<filename>.pdf_tex}
%%  instead of
%%   \includegraphics[width=<desired width>]{<filename>.pdf}
%%
%% Images with a different path to the parent latex file can
%% be accessed with the `import' package (which may need to be
%% installed) using
%%   \usepackage{import}
%% in the preamble, and then including the image with
%%   \import{<path to file>}{<filename>.pdf_tex}
%% Alternatively, one can specify
%%   \graphicspath{{<path to file>/}}
%% 
%% For more information, please see info/svg-inkscape on CTAN:
%%   http://tug.ctan.org/tex-archive/info/svg-inkscape
%%
\begingroup%
  \makeatletter%
  \providecommand\color[2][]{%
    \errmessage{(Inkscape) Color is used for the text in Inkscape, but the package 'color.sty' is not loaded}%
    \renewcommand\color[2][]{}%
  }%
  \providecommand\transparent[1]{%
    \errmessage{(Inkscape) Transparency is used (non-zero) for the text in Inkscape, but the package 'transparent.sty' is not loaded}%
    \renewcommand\transparent[1]{}%
  }%
  \providecommand\rotatebox[2]{#2}%
  \newcommand*\fsize{\dimexpr\f@size pt\relax}%
  \newcommand*\lineheight[1]{\fontsize{\fsize}{#1\fsize}\selectfont}%
  \ifx\svgwidth\undefined%
    \setlength{\unitlength}{288bp}%
    \ifx\svgscale\undefined%
      \relax%
    \else%
      \setlength{\unitlength}{\unitlength * \real{\svgscale}}%
    \fi%
  \else%
    \setlength{\unitlength}{\svgwidth}%
  \fi%
  \global\let\svgwidth\undefined%
  \global\let\svgscale\undefined%
  \makeatother%
  \begin{picture}(1,0.75)%
    \lineheight{1}%
    \setlength\tabcolsep{0pt}%
    \put(0,0){\includegraphics[width=\unitlength,page=1]{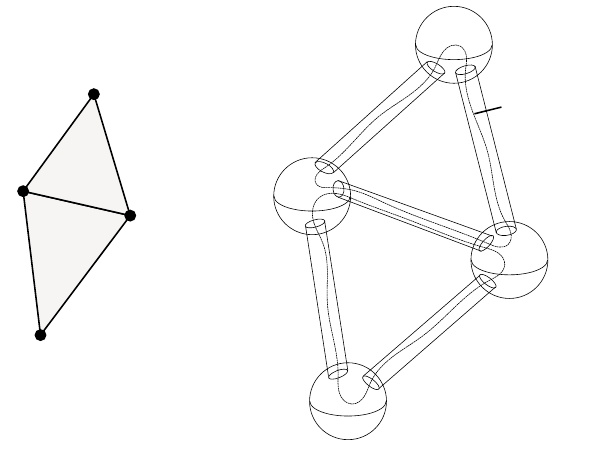}}%
    \put(0.13837991,0.46068468){\color[rgb]{0,0,0}\makebox(0,0)[t]{\lineheight{1.25}\smash{\begin{tabular}[t]{c}$f$\end{tabular}}}}%
    \put(0.086447,0.18061693){\color[rgb]{0,0,0}\makebox(0,0)[lt]{\lineheight{1.25}\smash{\begin{tabular}[t]{l}$v$\end{tabular}}}}%
    \put(0.09269677,0.52107529){\color[rgb]{0,0,0}\makebox(0,0)[rt]{\lineheight{1.25}\smash{\begin{tabular}[t]{r}$e$\end{tabular}}}}%
    \put(0.61808747,0.57302846){\color[rgb]{0,0,0}\makebox(0,0)[rt]{\lineheight{1.25}\smash{\begin{tabular}[t]{r}$\M(e)$\end{tabular}}}}%
    \put(0.84202481,0.55891385){\color[rgb]{0,0,0}\makebox(0,0)[lt]{\lineheight{1.25}\smash{\begin{tabular}[t]{l}$\M(f)$\end{tabular}}}}%
    \put(0.65322583,0.05496696){\color[rgb]{0,0,0}\makebox(0,0)[lt]{\lineheight{1.25}\smash{\begin{tabular}[t]{l}$\M(v)$\end{tabular}}}}%
  \end{picture}%
\endgroup%

    \caption{$X$ and $\M(X)$}
    \label{fig:M(X)}
\end{figure}
For each vertex $v$ of $X$, make a 3-sphere $\M(v)$. For each edge $e$ of $X$, add a connect sum tube (an $S^2\times I$) $\M(e)$  between the 3-spheres corresponding with its endpoints. Call the resulting 3-manifold $\M(X)$. $\M(X)$ is homeomorphic to a connect sum of many copies of $S^1 \times S^2$. For each 2-cell $f$ of $X$, embed a closed curve $\M(f)$ in $\M(X)$ which traverses the tubes corresponding with the edges in $\partial f$. At this step, there is some choice in how the various $\M(f_i)$ link with one another.  Let $N(\M(f))$ be a tubular neighbourhood of $\M(f)$. Choose a framing on $\M(f)$, and let $\lambda(f)$ (resp $\mu(f)$) be a curve in $\partial N(\M(f))$ realizing the longitude (resp meridian). Do Dehn surgery on $\M(f)$ with slope $q$. Call the resulting 3-manifold $\M(X,q)$. Let $T(f)$ be the new solid torus which is glued in during the Dehn surgery.
\end{construction}

Of course there is a lot of choice in this construction. For each 2-cell $f$, we need to choose how to embed $\M(f)$ and we need to choose a longitude for $\M(f)$. We also need to choose a Riemannian metric. These choices can all be made so that the resulting metric has $\poly(k)$-bounded geometry and distances increase compared to $X$ by a factor of $O(\poly(\deg))$.

\begin{prop}
    Suppose the local degree of $X$ is $k$.  We can choose the metric on $\M(X)$ so that the following hold:
    \begin{enumerate}
        \item For each vertex $v_i$ of $X$, $\M(v_i)$ is round 3-sphere with radius $\poly(\deg)$. As a result, $h_1(\M(v_i))\gtrsim \frac 1 {\poly(\deg)}$.
        \item For each edge $e_i$ of $X$, $\M(e_i)$ is a tube with radius $poly(\deg)$ and length 1. As a result, $h_1(\M(e_i)) \gtrsim \frac 1 {\poly(\deg)}$
        \item For each face $f_i$ of $X$, $\M(f_i)$ has an embedded tubular neighbourhood of radius 1, length $\poly(\deg)$, geodesic curvatures $\leq 1$, and a choice of longitude on $\partial N(\M(f_i))$ with length $\leq 10 |\M(f_i)|$. Furthermore, all these tubular neighbourhoods are disjoint. 
        \item The Dehn surgeries are performed so that $\M(X,q)$ has sectional curvatures $\leq 1$ and volume $\poly(q,\deg) |X|$. Furthermore, $h_1(T(f_i), \partial T(f_i))\gtrsim \frac 1 {\poly(\deg,q)}$.
    \end{enumerate}
\end{prop}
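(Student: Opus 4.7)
The plan is to build the Riemannian metric piece by piece, controlling the geometry of each vertex sphere, edge tube, face curve, and Dehn-surgery solid torus separately, then quoting the effective Dehn surgery construction of Section \ref{sec:dehn} for the final gluing.

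First I would fix a radius $R = R(\deg) = \poly(\deg)$ large enough to accommodate the combinatorics described below, and declare each $\M(v)$ to be a round 3-sphere of radius $R$ (with standard smoothing near the attaching regions). The standard spectral gap of a round 3-sphere of radius $R$ gives $h_1(\M(v)) \gtrsim 1/R \gtrsim 1/\poly(\deg)$. For each edge $e$, take $\M(e)$ to be a product $S^2(R) \times [0,1]$ joined to the two vertex spheres by short connect-sum collars of unit length and bounded curvature; again $h_1(\M(e)) \gtrsim 1/\poly(\deg)$ since this is a short tube of controlled geometry. This gives the first two items.

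Next I would embed the face curves. Each vertex $v$ is incident to at most $\deg$ edges, each edge to at most $\deg$ faces, so at most $O(\deg^2)$ strands of face curves $\M(f_i)$ must enter or leave any $\M(v)$ or pass through any $\M(e)$. Choose $R$ so that one can pick $O(\deg^2)$ disjoint points on each attaching 2-sphere, pairwise separated by distance $\geq 10$, such that the strand entering via a given edge and exiting via another can be completed to a smooth arc inside $\M(v)$ of geodesic curvature $\leq 1$, length $\lesssim R$, and with a radius-$1$ embedded normal neighbourhood disjoint from all other arcs; the same for the straight product strands inside $\M(e)$. Taking $R \sim \deg^{10}$ (say) leaves ample room. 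Concatenating the arcs in $\M(v)$'s and $\M(e)$'s produces each $\M(f_i)$ as a smooth closed curve of length $\lesssim \poly(\deg)|\partial f_i|$, all with disjoint unit tubular neighbourhoods and curvature $\leq 1$. For the longitude, since $\M(X)$ is a connect sum of $S^1 \times S^2$'s it is parallelizable, so each $\M(f_i)$ admits a framing by a unit normal vector field that is nearly constant along the curve, giving a longitude in $\partial N(\M(f_i))$ of length $\leq 10|\M(f_i)|$.

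Finally I would apply the effective Dehn surgery construction directly to each $\M(f_i)$ with slope $q$. That construction outputs a 3-manifold $\M(X,q)$ of sectional curvature $\leq 1$ whose new solid torus $T(f_i)$ has $\vol(T(f_i)) \lesssim \poly(q, |\M(f_i)|) \lesssim \poly(q,\deg)$ and $h_1(T(f_i), \partial T(f_i)) \gtrsim 1/\poly(q, \deg)$, as required. Summing volumes over the at most $|X|$ vertices, edges, and Dehn surgeries yields $\vol(\M(X,q)) \lesssim \poly(q,\deg)\,|X|$.

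The only step with real content is the routing of the face curves: everything else is local surgery with a well-understood geometry. The combinatorial point that makes the routing work is that the number of strands through any single $\M(v)$ or $\M(e)$ depends only on $\deg$, not on $|X|$, so a fixed polynomial-in-$\deg$ radius suffices. Once the curves are embedded with unit tubular neighbourhoods and bounded curvature, the quantitative conclusions follow immediately from Section \ref{sec:dehn}.
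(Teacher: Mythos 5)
Your proof is correct and follows essentially the same approach as the paper's (very brief) argument: choose vertex spheres and edge tubes of radius $\poly(\deg)$, note that the number of face-curve strands passing through any single $\M(v)$ or $\M(e)$ is bounded by the local degree of $X$ (not by $|X|$), route the tubular neighbourhoods disjointly in the ample polynomial volume, and then invoke the effective Dehn surgery construction of \cref{sec:dehn} for the sectional curvature, volume, and $h_1(T(f_i),\partial T(f_i))$ bounds. One small remark: invoking parallelizability of $\M(X)$ to get the short longitude is more than you need --- the normal bundle of any smooth simple closed curve in an oriented 3-manifold is trivial, and the minimal-twist framing (parallel transport corrected by at most one full rotation) always yields a longitude of length $O(|\M(f_i)|)$ regardless of the ambient topology.
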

\begin{proof}
    For each vertex $v$, there are at most $O(\deg)$ tubes $N(\M(f_i))$ that we need to route through $\M(v)$. Similarly, for each edge $e$, there are $O(\deg)$ tubes we need to route through $\M(e)$. So $\poly(\deg)$ volume is more than enough to route all of these tubes without intersection. Control on the Dehn surgeries comes from our discussion of effective Dehn surgery in the previous section.
\end{proof}

To bound the Cheeger constants of $\M(X,q)$, we will use a triangulation of $\M(X,q)$ having bounded geometry. Choose once and for all such a triangulation of bounded geometry for $\M(X,q)$. Choose the triangulation fine enough that $\partial N(T(f_i))$ and the interfaces between $\cup_i\sigma(v_i)$ and $\cup_i\sigma(e_i)$ can be realized as disjoint simplicial surfaces. In what follows, a \emph{simplicial curve or surface} in $\M(X,q)$ is a 1-chain or a 2-chain in this triangulation. A \emph{simple normal surface} in $\M(X,q)$ is a surface transverse to the triangulation of $\M(X,q)$ whose intersection with any given tetrahedron is either a triangle (separating one vertex from the other three) or a square (separating two vertices from the other two).

\begin{lemma}[Simplicial splitting lemma]
Suppose $\Sigma$ is a normal surface in a triangulated manifold whose intersection with each tetrahedron has at most one connected component. Let $c$ be a simplicial 1-cycle having zero algebraic intersection number with $\Sigma$. Then $c$ can be transported in $N(\Sigma)$, for cost $diam(\Sigma)area(\Sigma)$, to a simplicial 1-cycle $c'$ whose support does not include any edge intersecting $\Sigma$. Here, the costs are measured using $\norm{\cdot}_p$.
\end{lemma}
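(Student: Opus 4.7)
The plan is to cancel the bad edges of $c$ (those that cross $\Sigma$) by adding a $2$-chain in $N(\Sigma)$ built as the product of a $1$-chain on $\Sigma$ with the interval transverse to $\Sigma$. The hypothesis $c\cdot\Sigma=0$ will be used exactly to guarantee that the signed crossings bound inside $\Sigma$.

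Setup: the assumption that $\Sigma$ meets each tetrahedron in at most one connected component (a triangle or a square) means that $\Sigma$ has a product neighborhood $N(\Sigma)\cong \Sigma\times[-1,1]$ compatible with the triangulation, with $\Sigma=\Sigma\times\{0\}$. Inside $N(\Sigma)$, the edges of the triangulation crossing $\Sigma$ are precisely the ``vertical'' edges joining a vertex on one side of $\Sigma$ to one on the other side. Let $e_1,\dots,e_m$ be the bad edges appearing in $c$, and let $p_i = e_i\cap\Sigma$ with intersection sign $\epsilon_i\in\{\pm1\}$.

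Filling in $\Sigma$: form the $0$-chain $\alpha = \sum_i c(e_i)\,\epsilon_i\, p_i$ on $\Sigma$. The zero algebraic intersection hypothesis means precisely that $\alpha$ has total mass zero, i.e.\ is an (augmented) $0$-cycle on the connected surface $\Sigma$. Pair up the signed points of $\alpha$ and connect each pair by a shortest simplicial path in $\Sigma$ to produce a simplicial $1$-chain $\gamma\subset \Sigma$ with $\partial\gamma=\alpha$. Each path has length $\leq \diam(\Sigma)$, and routing the paths through $\Sigma$ yields a uniform bound
\begin{equation*}
\norm{\gamma}_p \;\leq\; \diam(\Sigma)\,\area(\Sigma)\,\norm{\alpha}_p,
\end{equation*}
valid for every $1\leq p\leq \infty$ (the $\diam$ factor is what one needs in $L^1$, and the $\area$ factor absorbs the worst-case overlap of the cancellation paths that can inflate the $L^\infty$ norm). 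Moreover $\norm{\alpha}_p\lesssim \norm{c}_p$ because each bad edge contributes a single crossing.

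Building the cobordism: let $C' = \gamma\times [-1,1]$ inside $N(\Sigma)$. By the product formula,
\begin{equation*}
\partial C' \;=\; \gamma\times\{-1\} \;-\; \gamma\times\{+1\} \;+\; \alpha\times[-1,1],
\end{equation*}
and, choosing $N(\Sigma)$ wide enough that each bad edge lies entirely inside, $p_i\times[-1,1]=\epsilon_i e_i$, so $\alpha\times[-1,1]$ is precisely the bad part $\sum_i c(e_i) e_i$ of $c$. Approximate $C'$ by a simplicial $2$-chain $C$ in the ambient triangulation of comparable $\norm{\cdot}_p$ norm (possible by bounded geometry), and set $c' = c-\partial C$. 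Then the bad edges of $c$ are killed and replaced by the $1$-chains $\pm \gamma\times\{\pm1\}$ on $\partial N(\Sigma)$, whose edges do not meet $\Sigma$. The cost bound follows from $\norm{C}_p\lesssim \norm{\gamma}_p\leq \diam(\Sigma)\area(\Sigma)\norm{c}_p$.

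The main technical point is the $L^p$ estimate on the filling $\gamma$ in $\Sigma$: for $p=1$ it is immediate from the diameter bound, but for $p>1$ one has to organize the cancellation paths so that overlaps are controlled; taking the bound $\diam(\Sigma)\area(\Sigma)$ gives a single estimate that works uniformly across all $p$, which is all that is needed for the applications that follow.
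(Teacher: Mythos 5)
Your proof is correct and follows the same strategy as the paper: project $c\cap\Sigma$ to a $0$-cycle $\alpha$ on $\Sigma$, fill $\alpha$ by short paths inside $\Sigma$, and sweep those paths in the normal direction to build the cobordism that rewires $c$ off of $\Sigma$. The paper's implementation is a little cleaner at the last step: each $1$-cell of a filling path $t_i\subset\Sigma$ already lies in a unique $2$-simplex of the ambient triangulation, so the union of those $2$-simplices is directly a simplicial $2$-chain $T_i$ doing the job of $t_i\times[-1,1]$ (no approximation of the smooth product is required), and the $L^p$ cost bound then drops out of $\norm{T_i}_p\leq\norm{T_i}_1=\norm{t_i}_1$ followed by H\"older on $\alpha$ (giving the sharper $\diam(\Sigma)\area(\Sigma)^{1-1/p}$), whereas your ``overlap'' heuristic for the $L^p$ estimate on $\gamma$ is murkier than needed -- the estimate already follows from the triangle inequality together with H\"older, without tracking overlaps at all.
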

\begin{proof}
The idea is basically to rewire $c$ to avoid $\Sigma$. As a simple normal surface, $\Sigma$ inherits a cell decomposition into triangles and quadrilaterals. Each of the edges of this cell decomposition corresponds with a triangle in the triangulation of $M$. Let $c_0$ be the 0-cycle $\Sigma \cap c$. We may write $c_0 = \sum_i c_i$ such that $\norm{c_0}_1 = \sum_i \norm{c_i}_1$ and $c_i$ is supported on two points with opposite sign. For each $c_i$, find an embedded 1-chain $t_i$ connecting its two endpoints. Let $T_i$ be the simplicial 2-chain obtained by replacing each 1-cell of $t_i$ with the unique triangle of the triangulation of $\sigma(X,q)$ containing it; this ensures that $T_i \cap \Sigma = t_i$. Then $c-\sum_i T_i$ has support disjoint from $\Sigma$. We also have

\begin{align*}
    \norm{\sum_i T_i}_p &\leq \sum_i \norm{T_i}_p\\
    &\leq \sum_i \norm{T_i}_1\\
    &= \sum_i \norm{t_i}_1\\
    &\leq \sum_i \norm{c_i}_1 \diam(\Sigma)\\
    &\leq \norm{c_0}_1 \diam(\Sigma)\\
    &\leq \norm{c}_p \area(\Sigma)^{1-1/p} \diam(\Sigma)
\end{align*}

\end{proof}

Here are a few kinds of simplicial surfaces with boundary which will be used to transport curves around the manifold.
\begin{enumerate}
    \item[(Type 1)] For each $i$, there is a simplicial 2-chain in the Dehn surgery 2-handle $T(f_i)$ with boundary $q \mu(f_i) + \lambda(f_i)$. 
    \item[(Type 2)] The surfaces described below in \cref{lem:nullhom}. See \cref{fig:surface2}.
    \item[(Type 3)] The restriction of a cross-section of $\M(e_i)$ to $\M(X)\setminus \cup_i N(\M(f_i))$. It is topologically a sphere with several disks removed. This surface can be viewed in $\M(X)$ or in $\M(X,q)$. See \cref{fig:surface3}.
    \item[(Type 4)] Surfaces in $\partial T(f_i)$.
\end{enumerate}

\begin{figure}
    \centering
    \def\svgwidth{3in}
    %% Creator: Inkscape 1.1.2 (0a00cf5339, 2022-02-04), www.inkscape.org
%% PDF/EPS/PS + LaTeX output extension by Johan Engelen, 2010
%% Accompanies image file '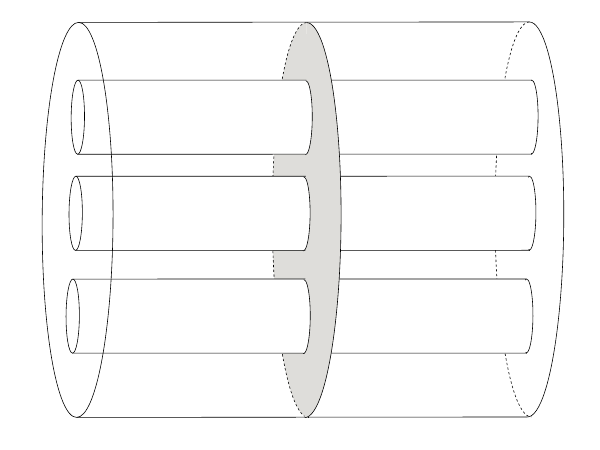' (pdf, eps, ps)
%%
%% To include the image in your LaTeX document, write
%%   \input{<filename>.pdf_tex}
%%  instead of
%%   \includegraphics{<filename>.pdf}
%% To scale the image, write
%%   \def\svgwidth{<desired width>}
%%   \input{<filename>.pdf_tex}
%%  instead of
%%   \includegraphics[width=<desired width>]{<filename>.pdf}
%%
%% Images with a different path to the parent latex file can
%% be accessed with the `import' package (which may need to be
%% installed) using
%%   \usepackage{import}
%% in the preamble, and then including the image with
%%   \import{<path to file>}{<filename>.pdf_tex}
%% Alternatively, one can specify
%%   \graphicspath{{<path to file>/}}
%% 
%% For more information, please see info/svg-inkscape on CTAN:
%%   http://tug.ctan.org/tex-archive/info/svg-inkscape
%%
\begingroup%
  \makeatletter%
  \providecommand\color[2][]{%
    \errmessage{(Inkscape) Color is used for the text in Inkscape, but the package 'color.sty' is not loaded}%
    \renewcommand\color[2][]{}%
  }%
  \providecommand\transparent[1]{%
    \errmessage{(Inkscape) Transparency is used (non-zero) for the text in Inkscape, but the package 'transparent.sty' is not loaded}%
    \renewcommand\transparent[1]{}%
  }%
  \providecommand\rotatebox[2]{#2}%
  \newcommand*\fsize{\dimexpr\f@size pt\relax}%
  \newcommand*\lineheight[1]{\fontsize{\fsize}{#1\fsize}\selectfont}%
  \ifx\svgwidth\undefined%
    \setlength{\unitlength}{288bp}%
    \ifx\svgscale\undefined%
      \relax%
    \else%
      \setlength{\unitlength}{\unitlength * \real{\svgscale}}%
    \fi%
  \else%
    \setlength{\unitlength}{\svgwidth}%
  \fi%
  \global\let\svgwidth\undefined%
  \global\let\svgscale\undefined%
  \makeatother%
  \begin{picture}(1,0.75)%
    \lineheight{1}%
    \setlength\tabcolsep{0pt}%
    \put(0,0){\includegraphics[width=\unitlength,page=1]{surface3.pdf}}%
    \put(0.71818231,0.54992664){\color[rgb]{0,0,0}\makebox(0,0)[t]{\lineheight{1.25}\smash{\begin{tabular}[t]{c}$N(\M(f_1))$\end{tabular}}}}%
    \put(0.72297802,0.3846892){\color[rgb]{0,0,0}\makebox(0,0)[t]{\lineheight{1.25}\smash{\begin{tabular}[t]{c}$N(\M(f_2))$\end{tabular}}}}%
    \put(0.71494703,0.21462766){\color[rgb]{0,0,0}\makebox(0,0)[t]{\lineheight{1.25}\smash{\begin{tabular}[t]{c}$N(\M(f_3))$\end{tabular}}}}%
    \put(0,0){\includegraphics[width=\unitlength,page=2]{surface3.pdf}}%
    \put(0.65008974,0.08000384){\color[rgb]{0,0,0}\makebox(0,0)[lt]{\lineheight{1.25}\smash{\begin{tabular}[t]{l}$\S_3$\end{tabular}}}}%
  \end{picture}%
\endgroup%

    \caption{The picture shows $\M(e)$ for some edge $e$, which is topologically an $S^2\times [0,1]$. A type 3 surface is shown in grey.}
    \label{fig:surface3}
\end{figure}

\begin{figure}
    \centering
    %% Creator: Inkscape 1.1.2 (0a00cf5339, 2022-02-04), www.inkscape.org
%% PDF/EPS/PS + LaTeX output extension by Johan Engelen, 2010
%% Accompanies image file '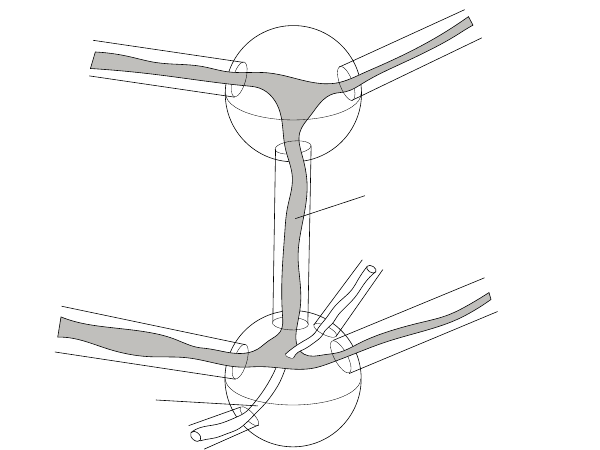' (pdf, eps, ps)
%%
%% To include the image in your LaTeX document, write
%%   \input{<filename>.pdf_tex}
%%  instead of
%%   \includegraphics{<filename>.pdf}
%% To scale the image, write
%%   \def\svgwidth{<desired width>}
%%   \input{<filename>.pdf_tex}
%%  instead of
%%   \includegraphics[width=<desired width>]{<filename>.pdf}
%%
%% Images with a different path to the parent latex file can
%% be accessed with the `import' package (which may need to be
%% installed) using
%%   \usepackage{import}
%% in the preamble, and then including the image with
%%   \import{<path to file>}{<filename>.pdf_tex}
%% Alternatively, one can specify
%%   \graphicspath{{<path to file>/}}
%% 
%% For more information, please see info/svg-inkscape on CTAN:
%%   http://tug.ctan.org/tex-archive/info/svg-inkscape
%%
\begingroup%
  \makeatletter%
  \providecommand\color[2][]{%
    \errmessage{(Inkscape) Color is used for the text in Inkscape, but the package 'color.sty' is not loaded}%
    \renewcommand\color[2][]{}%
  }%
  \providecommand\transparent[1]{%
    \errmessage{(Inkscape) Transparency is used (non-zero) for the text in Inkscape, but the package 'transparent.sty' is not loaded}%
    \renewcommand\transparent[1]{}%
  }%
  \providecommand\rotatebox[2]{#2}%
  \newcommand*\fsize{\dimexpr\f@size pt\relax}%
  \newcommand*\lineheight[1]{\fontsize{\fsize}{#1\fsize}\selectfont}%
  \ifx\svgwidth\undefined%
    \setlength{\unitlength}{288bp}%
    \ifx\svgscale\undefined%
      \relax%
    \else%
      \setlength{\unitlength}{\unitlength * \real{\svgscale}}%
    \fi%
  \else%
    \setlength{\unitlength}{\svgwidth}%
  \fi%
  \global\let\svgwidth\undefined%
  \global\let\svgscale\undefined%
  \makeatother%
  \begin{picture}(1,0.75)%
    \lineheight{1}%
    \setlength\tabcolsep{0pt}%
    \put(0,0){\includegraphics[width=\unitlength,page=1]{surface2.pdf}}%
    \put(0.61364722,0.4143749){\color[rgb]{0,0,0}\makebox(0,0)[lt]{\lineheight{1.25}\smash{\begin{tabular}[t]{l}$\mathcal S_2$\end{tabular}}}}%
    \put(0.25528289,0.07392346){\color[rgb]{0,0,0}\makebox(0,0)[rt]{\lineheight{1.25}\smash{\begin{tabular}[t]{r}$N(\M(f_i))$\end{tabular}}}}%
  \end{picture}%
\endgroup%

    \caption{A surface of type 2, as constructed in \cref{lem:nullhom}}
    \label{fig:surface2}
\end{figure}
\begin{lemma}\label{lem:nullhom}
    If $c$ is a null-homologous simplicial 1-cycle in $\M(X)$, then $c$ bounds a simplicial 2-chain in $\M(X)$ of norm $\lesssim \poly(\deg)\norm{c}$. Moreover, if $c\subset \M(X) \setminus \cup_i N(\M(f_i))$, then $c$ can be transported in $\M(X) \setminus \cup_i N(\M(f_i))$ to a linear combination of meridians $\sum_i x_i \mu(f_i)$ of norm $\lesssim \poly(\deg) \norm{c}$, for $\poly(\deg)$ cost.
\end{lemma}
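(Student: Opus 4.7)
The plan is to reduce both statements to filling problems inside the simply-connected building blocks $\M(v_i) \cong S^3$ and $\M(e_i) \cong S^2 \times I$, each of which has $H_1 = 0$ and, by the choice of metric, $h_1 \gtrsim 1/\poly(\deg)$. For the first statement, I apply the simplicial splitting lemma to $c$ along every separating 2-sphere $\Sigma$ lying between some $\M(v_i)$ and an adjacent $\M(e_j)$; these spheres are pairwise disjoint and simplicial in our chosen triangulation, so the splittings can be performed simultaneously without interacting. The lemma requires zero algebraic intersection of $c$ with each $\Sigma$, which holds because $[c] = 0$ in $H_1(\M(X))$ forces vanishing intersection pairing with every 2-cycle. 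After splitting, $c$ is transported to a 1-cycle $c'$ of norm $\lesssim \poly(\deg) \norm{c}$ whose support is a disjoint union of closed curves each contained in a single $\M(v_i)$ or $\M(e_i)$. Each such component is automatically null-homologous in its piece and admits a filling of norm $\lesssim \poly(\deg)$ times the component's norm; summing these with the splitting 2-chain gives the desired filling of $c$ in $\M(X)$.

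For the second statement, start with the filling $\beta$ of $c$ just produced and modify it to live in $Y := \M(X) \setminus \cup_i N(\M(f_i))$. After a small perturbation, $\alpha_i := \beta \cap \partial N(\M(f_i))$ is a simplicial 1-cycle on each boundary torus. Because $\beta \cap N(\M(f_i))$ is a 2-chain in the solid torus $N(\M(f_i))$ with boundary $\alpha_i$, $\alpha_i$ must be null-homologous in $N(\M(f_i))$, forcing $[\alpha_i] = k_i[\mu(f_i)]$ in $H_1(\partial N(\M(f_i)))$ for an integer $k_i$ equal to the algebraic intersection of $\beta$ with the core curve $\M(f_i)$. Pick $D_i \subset \partial N(\M(f_i))$ with $\partial D_i = \alpha_i - k_i\mu(f_i)$ and $\norm{D_i} \lesssim \poly(\deg)\norm{\alpha_i}$, possible because the torus has diameter $\poly(\deg)$ and the bounding curve is null-homologous there. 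Then
\[
\beta' := (\beta \cap Y) + \sum_i D_i
\]
is a 2-chain in $Y$ with $\partial \beta' = c - \sum_i k_i \mu(f_i)$, exhibiting the required transport.

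The main remaining step, and where I expect the bookkeeping to be most delicate, is to bound $\sum_i |k_i|$ and $\sum_i \norm{\alpha_i}$ by $\poly(\deg) \norm{\beta}$. Both are coarea-type estimates that exploit bounded geometry: each 2-simplex of $\beta$ is incident to only $\poly(\deg)$ of the tubes $N(\M(f_i))$, and at each incident tube it contributes $O(1)$ to the number of intersection points with the core $\M(f_i)$ and to the norm of the corresponding 1-chain on $\partial N(\M(f_i))$. Summing yields $\sum_i |k_i| + \sum_i \norm{\alpha_i} + \sum_i \norm{D_i} \lesssim \poly(\deg) \norm{\beta} \lesssim \poly(\deg) \norm{c}$, which simultaneously controls both the transport cost $\norm{\beta'}$ and the norm $\sum_i |k_i|\norm{\mu(f_i)}$ of the target meridian combination.
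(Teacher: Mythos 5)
Your proposal is correct and matches the paper's own argument: both parts use the simplicial splitting lemma across the $\M(v)$--$\M(e)$ interfaces, fill in the simply connected pieces, and then restrict the filling surface to the complement of the tubes $N(\M(f_i))$, landing on a $1$-cycle on the boundary tori that is a meridian class and gets cleaned up using the $\poly(\deg)$-bounded geometry of each torus. The bookkeeping you flag as delicate (controlling $\sum_i\norm{\alpha_i}$ and $\sum_i|k_i|$) does go through straightforwardly via \cref{prop:dbounded} applied to $\beta\cap Y$ together with disjointness of the tori, and is likewise left implicit in the paper, so this is not a gap.
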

\begin{proof}
    Given a simplicial 1-cycle $c$ in $\M(X)$, use the simplicial splitting lemma on the interface between $\M(e)$ and $\M(v)$ for each incidence between an edge $e$ and a vertex $v$ of $X$. This gives a transport of $\poly(\deg)$ cost to a new curve $c'$ each of whose components lives in $\M(v)$ for some vertex $v$ or $\M(e)$ for some edge $e$. The we can fill $c'$ for $\poly(\deg)$ cost because $\M(v)$ and $\M(e)$ both have filling constants bounded by $\poly(\deg)$. This gives the desired filling of $c$. Call this filling $S$.

    To find the desired transport to a collection of meridians, the idea is to delete $S\cap \cup_i N(\M(f_i))$ from $S$. The resulting surface is a transport from $c$ to a simplicial 1-cycle in $\cup_i \partial N(\M(f_i))$ which is homologous to a collection of meridians. Then use the fact that $h_1(\partial T,\norm{\cdot}_p)\gtrsim \frac 1 {\poly(k)}$ to efficiently transport this 1-cycle to a linear combination of meridians for cost $\poly(k)$.
\end{proof}

\begin{lemma}\label{lem:fillcyc}
    If $\gamma$ is a simplicial 1-cycle in $\M(X,q)$, then $\gamma$ may be transported to a linear combination of meridians of norm $\lesssim \frac 1 {h_1(X,\norm \cdot)_p}\poly(q,\deg)\norm{\gamma}_p$ for cost $\poly(q,\deg)$.
\end{lemma}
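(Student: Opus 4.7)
The plan is to mimic the warmup calculation for links in $S^3$ from the previous section, replacing the linking-matrix argument by the 1-expansion of $X$. Given a simplicial 1-cycle $\gamma$ in $\M(X,q)$, first apply the simplicial splitting lemma to the closed surfaces $\partial T(f_i)$: every 1-cycle has zero algebraic intersection number with a closed surface, so this produces, at cost $\poly(\deg,q)$, a transport $\gamma \sim \gamma_{ext} + \sum_i \gamma_i^{int}$ where $\gamma_{ext}$ is a 1-cycle in the link exterior $E = \M(X) \setminus \cup_i N(\M(f_i))$ and each $\gamma_i^{int}$ lives in the solid torus $T(f_i)$. Since $H_1(T(f_i),\mathbb{Q}) \cong \mathbb{Q}\,\mu(f_i)$ after surgery, each $\gamma_i^{int}$ is homologous in $T(f_i)$ to a multiple $k_i\mu(f_i)$; the bound $h_1(T(f_i),\partial T(f_i)) \gtrsim 1/\poly(\deg,q)$ lets us realize this homology by a polynomially bounded 2-chain inside $T(f_i)$, transporting $\gamma_i^{int}$ to $k_i\mu(f_i)$.

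Now handle $\gamma_{ext}$ via the 1-expansion of $X$. The image of $\gamma_{ext}$ under the natural isomorphism $H_1(\M(X),\mathbb{Q}) \cong H_1(X^{(1)},\mathbb{Q})$ can be read off by counting signed intersections of $\gamma_{ext}$ with the type 3 cross-sections of each tube $\M(e)$, yielding a simplicial 1-cycle $\alpha_\gamma \in C_1(X^{(1)})$ with $\norm{\alpha_\gamma}_p \lesssim \poly(\deg)\norm{\gamma_{ext}}_p$. The hypothesis on $h_1(X,\norm{\cdot}_p)$ produces a 2-chain filling $\eta = \sum_i c_i f_i$ in $X$ with $\partial\eta = \alpha_\gamma$ and $\sum_i |c_i|\leq \tfrac{1}{h_1(X,\norm{\cdot}_p)}\norm{\alpha_\gamma}_p$. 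Each longitude $\lambda(f_i) = \M(f_i)$ represents $\partial f_i$ under the identification $H_1(\M(X))\cong H_1(X^{(1)})$, so by construction $\gamma_{ext} - \sum_i c_i\lambda(f_i)$ is null-homologous in $\M(X)$ and supported in $E$. Apply \cref{lem:nullhom} to transport this corrected cycle inside $E$ to a combination of meridians $\sum_i d_i\mu(f_i)$ at cost $\poly(\deg)$, with $\sum_i|d_i| \lesssim \tfrac{1}{h_1(X,\norm{\cdot}_p)}\poly(\deg,q)\norm{\gamma}_p$. Finally, for each $i$ use a type 1 surgery disk inside $T(f_i)$ to transport $c_i\lambda(f_i)$ to $-qc_i\mu(f_i)$ at cost $\poly(q,\deg)$. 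Collecting everything, $\gamma$ is transported to $\sum_i(k_i+d_i-qc_i)\mu(f_i)$ with meridian norm $\lesssim \tfrac{1}{h_1(X,\norm{\cdot}_p)}\poly(q,\deg)\norm{\gamma}_p$ and total transport cost $\poly(q,\deg)$, as required.

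The most delicate step is the exterior part. One must specify the intersection-pairing projection $\gamma_{ext}\mapsto\alpha_\gamma$ concretely enough to control its norm and to verify that the resulting 1-chain in $X^{(1)}$ really represents the class of $\gamma_{ext}$. Then one must check that $\gamma_{ext}-\sum_i c_i\lambda(f_i)$ is genuinely null-homologous in $\M(X)$, which ultimately reduces to the statement that $\lambda(f_i)=\M(f_i)$ maps to $\partial f_i$ under $H_1(\M(X))\cong H_1(X^{(1)})$. The factor $1/h_1(X,\norm{\cdot}_p)$ enters only through filling $\alpha_\gamma$ inside the 2-complex $X$, and the factor of $q$ comes from the Dehn surgery identity $\lambda(f_i) = -q\mu(f_i)$ in $H_1(T(f_i))$, which is precisely the mechanism that converts efficient fillings in the 2-complex $X$ into small meridian combinations in the 3-manifold $\M(X,q)$.
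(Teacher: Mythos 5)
Your proposal is correct and follows essentially the same approach as the paper: push $\gamma$ off the surgery solid tori, project the exterior part to the 1-skeleton of $X$, fill there using $h_1(X)$, convert longitudes to meridians via the type 1 surgery surfaces (which is where the factor of $q$ enters), and finish by transporting the null-homologous remainder to meridians via \cref{lem:nullhom}. The only cosmetic difference is in the bookkeeping: the paper first transports $\gamma'$ to $\gamma'' = \gamma' + \sum_i a_i(q\mu_i+\lambda_i)$ (a single null-homologous cycle) and then fills and projects, whereas you split $\gamma_{ext}$ as $(\gamma_{ext}-\sum c_i\lambda_i)+\sum c_i\lambda_i$ and handle the two summands separately — both routes lead to the same cobordisms and the same bounds.
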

\begin{proof}
    First, we transport $\gamma$ to a simplicial 1-cycle $\gamma'$ which sits in the complement of the Dehn surgery handles, ie inside $\M(X,q)\setminus \cup_i T(f_i)$. This uses the fact that for each 2-cell $f$, the corresponding Dehn surgery solid torus $T(f)$ satisfies $1/h_{1}(T(f),\partial T(f))\lesssim \poly(\deg,q)$. This allows us to transport $\gamma$ outside of $T(f)$ for cost $\poly(\deg,q)$.

    Let $\sigma^{-1}(\gamma')$ be the image of $\gamma'$ under the obvious projection $\M(X)\to X$. Note that $\norm{\M^{-1}(\gamma')}_p\lesssim \norm\gamma_p$. Using the fact that $X$ is a good expander in dimension 1, find an efficient 2-chain $C=\sum_i a_i f_i$ in $X$ satisfying $\partial C = \M^{-1}(\gamma')$. Efficient means that $\norm{C}_p\leq \frac {\norm{\M^{-1}(\gamma')}_p} {h_1(X,\norm{\cdot}_p)}$. For $\poly(\deg,q)$ cost, transport $\gamma'$ to $\gamma'' := \gamma' +  \sum_i a_i (q\mu_i + \lambda_i)$. This transport uses type 1 surfaces in $T(f_i)$ for each $i$ such that $a_i\neq 0$. The size of this transport is $\poly(\deg,q)\norm{C}_p$, so this transport is also efficient.

    Now think of $\gamma''$ as a 1-cycle in $\M(X)$. Note that $\gamma''$ is null-homologous in $\M(X)\setminus \cup_i T(f_i)$. So we can fill $\gamma''$ in $\M(X)$ for cost $\poly(\deg,q)$. The filling 2-chain $S$ has norm $\poly(\deg,q)\norm{\gamma''}_p\lesssim \poly(\deg,q)\norm{\gamma}_p$. As before, delete $S\cap \cup_i N(\M(f_i))$ from $S$ to obtain to get a surface which transports $S$ to a linear combination of meridians. By \cref{prop:dbounded}, this linear combination of meridians has norm $\lesssim \norm{S}_p$. Therefore, we have established that $\gamma$ can be transported to the sum of meridians of norm $\lesssim \poly(q,\deg)\norm{\gamma}_p$ for $\poly(\deg,q)$ cost.
\end{proof}

\begin{lemma}\label{lem:fillmerid}
    Let $\gamma=\sum_i z_i \mu(f_i)$ be a linear combination of meridians. Then $\gamma$ can be transported to a linear combination of meridians of norm $\leq \frac 1 {q\hprime(X,\norm{\cdot}_p)}\poly(\deg) \norm{\gamma}_p$ for cost $\frac 1 {\hprime(X)}\poly(q,\deg)$.
\end{lemma}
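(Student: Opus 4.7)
Proof plan. The plan is to use $\hprime(X)$ to decompose the 2-chain in $X$ corresponding to $\gamma$, then handle the coexact part with cross-section surfaces in the tubes $\M(e)$ and handle the closed part by trading meridians for longitudes at rate $1/q$ inside the Dehn surgery handles. Set $\alpha = \sum_i z_i f_i \in C_2(X)$; since a single meridian is a normal 1-cycle of bounded $p$-norm, $\norm{\alpha}_p$ and $\norm{\gamma}_p$ agree up to a factor of $\poly(\deg)$. Apply the definition of $\hprime(X,\norm{\cdot}_p)$ to split $\alpha = d\beta + \delta$ with $\partial \delta = 0$ and $\norm{\beta}_p + \norm{\delta}_p \leq \frac{1}{\hprime(X,\norm{\cdot}_p)} \norm{\alpha}_p$. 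Correspondingly split $\gamma = \gamma_1 + \gamma_2$, where $\gamma_1$ is the meridian 1-chain dual to $d\beta$ and $\gamma_2$ is the meridian 1-chain dual to $\delta$.

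For $\gamma_1$, I would fill it directly using type 3 surfaces: for each edge $e$ of $X$ appearing in $\beta$ with coefficient $b_e$, the cross-section of $\M(e)$ in $\M(X) \setminus \bigcup_i N(\M(f_i))$ has boundary $\sum_{f \supset e} \pm \mu(f)$, which is precisely the meridian chain dual to the 2-chain $d e$ in $X$. Summing $b_e$ copies of these type 3 surfaces over all edges produces a 2-chain with boundary $\gamma_1$, of total $p$-norm $\lesssim \poly(\deg) \norm{\beta}_p \lesssim \frac{\poly(\deg)}{\hprime(X)} \norm{\gamma}_p$.

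For $\gamma_2$, I would use the type 1 surfaces inside each Dehn surgery handle $T(f_i)$, which have boundary $q\mu(f_i) + \lambda(f_i)$ and $p$-norm $\poly(q,\deg)$. Taking $\delta_i/q$ copies of each gives a cobordism from $\gamma_2$ to $-\frac{1}{q}\sum_i \delta_i \lambda(f_i)$, of total surface norm $\lesssim \frac{\poly(q,\deg)}{q} \norm{\delta}_p$. The key observation is now that $\lambda(f_i) \subset \partial N(\M(f_i))$ is isotopic in $\M(X)$ to the core curve $\M(f_i)$, which under the collapse $\M(X) \to X^{(1)}$ represents $\partial f_i \in H_1(X^{(1)}) \cong H_1(\M(X))$. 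Therefore $\left[\sum_i \delta_i \lambda(f_i)\right] = [\partial \delta] = 0$ in $H_1(\M(X))$ because $\delta$ is a 2-cycle. After pushing the longitudes slightly off $\partial N(\M(f_i))$ at negligible cost, \cref{lem:nullhom} transports this null-homologous 1-chain, staying inside $\M(X) \setminus \bigcup_i N(\M(f_i))$, to a linear combination of meridians of norm $\lesssim \frac{\poly(\deg)}{q} \norm{\delta}_p \lesssim \frac{\poly(\deg)}{q\hprime(X,\norm{\cdot}_p)} \norm{\gamma}_p$, for cost $\poly(\deg)$.

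Adding up, the total cobordism from $\gamma$ to the final sum of meridians has $p$-norm $\lesssim \frac{\poly(q,\deg)}{\hprime(X,\norm{\cdot}_p)} \norm{\gamma}_p$, yielding the advertised cost and final norm. The main obstacle is the bookkeeping surrounding the null-homology claim in step 3: one needs to see that the 2-cycle condition $\partial \delta = 0$ in $X$ is exactly what is required to kill the class of $\sum_i \delta_i \lambda(f_i)$ in $\M(X)$, so that the deferred application of \cref{lem:nullhom} succeeds. Everything else is a matter of choosing the right type 1, type 3, and auxiliary surfaces and tracking their sizes against $\norm{\beta}_p$ and $\norm{\delta}_p$.
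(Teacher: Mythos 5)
Your proposal is correct and follows essentially the same route as the paper: decompose the corresponding 2-chain via $\hprime(X)$ into a coexact piece and a closed piece, dispose of the coexact piece with type 3 surfaces, trade the remaining meridians for $1/q$-longitudes with type 1 surfaces, observe that $\partial\delta=0$ makes the resulting longitude combination null-homologous in $\M(X)$, and finish with \cref{lem:nullhom}. The only difference is that you spell out the null-homology step via the identification $H_1(\M(X))\cong H_1(X^{(1)})$ with $[\M(f_i)]\mapsto [\partial f_i]$, which the paper asserts without elaboration.
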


\begin{proof}
    Think of $\gamma$ as corresponding to the 2-chain $\sum_i z_i f_i$ in $X$. We will freely the correspondence between 2-chains and 2-cochains in $X$. By the definition of $\hprime(X)$, there is a 1-chain $x=\sum_i x_i e_i$ and a closed 2-chain $y=\sum_i y_i f_i$ satisfying $\sum_i z_i f_i = d x + y$ and $\norm{x}_p + \norm{y}_p\leq \frac 1 {\hprime(X)}\norm{\sum_i z_i f_i}_p$. 

    Now the type 3 surface $\sum_i x_i \S_{3,i}$ transports $\gamma$ to $\sum_i y_i \mu(f_i)$ for cost $\poly(\deg)$. The type 1 surface $\sum_i y_i \S_{1,i}$ transports $\sum_i y_i \mu(f_i)$ to $\sum_i y_i \frac 1 q \lambda_i$ for cost $\poly(q,\deg)$. Since $y$ was a closed 2-chain, $\sum_i y_i \frac 1 q \lambda_i$ is null-homologous in $\M(X)$. By \cref{lem:nullhom}, we may now transport $\sum_i y_i \frac 1 q \lambda_i$ to a linear combination of meridians of norm $\frac 1 q\poly(\deg) \norm{y}_p \lesssim \frac 1 {q\hprime(X)} \poly(\deg) \norm{\gamma}_p$ for $\poly(k)$ cost. The total cost of this transport is $\frac 1 {\hprime(X,\norm{\cdot}_p)} \poly(q,\deg)$.
\end{proof}

Now we are ready to prove \cref{thm:1}.

\begin{proof}[Proof of \cref{thm:1}]
    Recall that $C$ is the name for an upper bound on the reciprocals of all the Cheeger constants of $X$. If $q$ is a sufficiently high degree polynomial in $\deg$ and $C$, then by \cref{lem:fillmerid}, every linear combination of meridians can be transported for $\poly(\deg,C)$ cost to a linear combination of meridians which has half the norm. By \cref{lem:expfill}, it follows that every linear combination of meridians can be filled for $\poly(\deg,C)$ cost. By \cref{lem:fillcyc}, any 1-cycle can be transported to a linear combination of meridians of controlled norm for $\poly(q,\deg)$ cost. So it follows that any 1-cycle can be filled for $\poly(\deg,C)$ cost. 

    With $q$ now fixed, there is a surjection $\pi$ from the 1-skeleton of the triangulation of $\M(X,q)$ to the 1-skeleton of $X$ whose fibers are connected and have size $\poly(\deg,C)$. Since the fibers of $\pi$ have size $\poly(\deg, C)$, their Cheeger constants are also $O(\poly(\deg,C))$. Applying \cref{lem:lerayserre} to this map, we find that 
$$    \frac 1 {h_0(\M(X,q),\norm{\cdot}_p)} \leq \poly(\deg, C)
$$
and
$$    \frac 1 {h^1(\M(X,q),\norm{\cdot}_p)} \leq \poly(\deg, C)
$$
So far, all these Cheeger constants are simplicial Cheeger constants. But by \cref{prop:simplicial}, the same inequalities hold for Riemannian Cheeger constants. By Poincar\'e duality, $h_2(\M(X,q),\norm{\cdot}_p) = h^1(\sigma(X,q),\norm{\cdot}_p)$. So we have established the desired control on all the Cheeger constants of $\M(X,q)$.
\end{proof}

\section{Torsion homology and topological overlap}\label{sec:topoverlap}
In this section, we prove \cref{thm:superpolytorsion} and \cref{thm:3}. All norms in this section are mass norms. We will also use polyhedral chains and rational coefficients everywhere as permitted by \cref{prop:realrational}. The arguments are variations of Gromov's proof of his topological overlap theorem. The original argument works with $\Z/2$ or $\Z$ coefficients, and rests on the fact that the minimal norm of a representative of a nonzero multiple of the fundamental class has norm $\sim \vol(M)$. The following trick relates rational filling to integer filling:

\begin{lemma}\label{lem:ratint}
Suppose $M$ is an oriented 3-manifold and $\gamma$ is a 1-cycle with integer coefficients which is trivial in integer homology. If $\gamma$ can be filled with a rational 2-chain of area $A$, then $\gamma$ can be filled with an integer 2-chain of area $A$.    
\end{lemma}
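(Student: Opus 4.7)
The plan is to give a coarea-based proof: realize an integer filling of $\gamma$ as a level set of a map from the knot complement to the circle, where the mass of a 2-chain becomes the $L^1$-norm of a dual 1-form. Set $V := M \setminus N(\gamma)$. By Poincar\'e--Lefschetz duality, 2-chains $\beta$ in $M$ with $\partial\beta=\gamma$ correspond, modulo 2-cycles in $M$, to closed 1-forms $\omega_\beta$ on $V$ whose periods on the meridians of $\partial N(\gamma)$ match the integer multiplicities of $\gamma$. Integer fillings correspond to classes in $H^1(V;\Z)$ (equivalently, homotopy classes of maps $V \to S^1$), rational fillings to classes in $H^1(V;\Q)$, and in either case the mass of $\beta$ equals the $L^1$-norm of a smooth representing 1-form (using smoothing and \cref{thm:fedapprox}).

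The first step is to modify $\beta$ into a 2-chain whose dual cohomology class is integer, without increasing mass. The hypothesis that $\gamma$ bounds integrally provides an integer reference $\beta_0$ with $[\omega_{\beta_0}] \in H^1(V;\Z)$; the difference $[\omega_\beta]-[\omega_{\beta_0}]$ is a rational class in $H^1(V;\Q)$ whose restriction to $\partial V$ vanishes, so it is the image of a rational 2-cycle in $M$. Because $H_2(M;\Z)$ is torsion-free for an oriented 3-manifold (Poincar\'e duality identifies it with the free group $H^1(M;\Z)$), this rational 2-cycle decomposes as an integer 2-cycle plus a rational boundary. Adding the integer piece to $\beta$ adjusts the cohomology class to an integer one, while the remaining rational boundary can be absorbed by smoothing at arbitrarily small $L^1$ cost.

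Once $\omega_\beta = df$ for a smooth $f:V\to S^1$ with the correct degrees on meridians, the coarea formula gives
\[
\int_{S^1}\area(f^{-1}(t))\,dt \;=\; \int_V |df| \;\leq\; A+\varepsilon,
\]
so a generic regular value $t_0$ yields a smoothly embedded oriented integer surface $f^{-1}(t_0)$ with $\partial f^{-1}(t_0)=\gamma$ and area at most $A+\varepsilon$. A final application of Federer--Fleming (\cref{thm:fedapprox}) replaces this smooth surface by a polyhedral integer 2-chain of area within $\varepsilon$ of $f^{-1}(t_0)$, and letting $\varepsilon\to 0$ completes the argument.

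The main obstacle is the integrality adjustment in the second paragraph: one must verify that the integer correction to $[\omega_\beta]$ can be chosen so that the resulting smooth 1-form still has $L^1$-norm arbitrarily close to $A$. This is where the torsion-freeness of $H_2(M;\Z)$ enters essentially, and where most of the geometric content of the lemma lives; the other steps are standard manipulations of currents and level sets.
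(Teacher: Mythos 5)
Your coarea strategy is genuinely different from the paper's cut-and-paste argument (the paper clears denominators, makes the surface embedded, and argues it separates the knot complement into $n$ pieces), but your integrality adjustment in the second paragraph has a real gap that the torsion-freeness of $H_2(M;\Z)$ does not close.

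You claim that the rational 2-cycle $z = \beta - \beta_0$ "decomposes as an integer 2-cycle plus a rational boundary" because $H_2(M;\Z)$ is torsion-free. But torsion-freeness of $H_2(M;\Z)$ only says that the map $H_2(M;\Z) \to H_2(M;\Q)$ is \emph{injective}; it says nothing about surjectivity. The decomposition $z = z_\Z + \partial w$ is equivalent to $[z]$ lying in the image of $H_2(M;\Z)$ inside $H_2(M;\Q)$, and this simply need not hold. Concretely: take $M = S^1 \times S^2$, let $\gamma$ be a small unknot bounding a small disk $D$, and let $\beta = \frac{1}{2}[S^2\times pt] + D$. With $\beta_0 = D$, the difference $\beta - \beta_0 = \frac{1}{2}[S^2 \times pt]$ represents $\frac{1}{2} \in H_2(M;\Q) \cong \Q$, which is not an integer class, so no decomposition of the claimed kind exists. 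Notice the lemma itself is still fine in this example (one should just fill with $D$), but your recipe --- nudge the dual cohomology class of $\beta$ to a nearby integer class at small cost --- is not available: the nearest integer class is a distance $\frac{1}{2}\vol(S^2)$ away in the relevant norm, and the correct integer filling $D$ lives in a cohomology class far from $[\omega_\beta]$.

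The deeper issue is that "find an integer class near the dual of $\beta$ and pay a small price" is not the right mechanism; the optimal integer filling can live in a completely different homology class from the rational one. To make a coarea/level-set argument work you would need to first clear denominators (so $n\beta$ dualizes to an honest class in $H^1(V;\Z)$, hence to a map $f: V \to S^1$), apply coarea to get a level set of area $\leq nA$, and then explain why this level set splits into $n$ pieces each filling $\gamma$ once --- at which point you have essentially reconstructed the paper's separating-surface argument in coarea language. The place you identified as carrying "most of the geometric content" is indeed the crux, but the justification you give for it is incorrect.
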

\begin{proof}[Proof sketch]
    Let $S$  be the rational 2-chain filling $\gamma$. Then for some $n$, $nS$ is an integer 2-chain filling $n\gamma$ having area $nA$. Approximate $nS$ by an immersed surface, then cut and paste to remove the self-intersections. The resulting surface $R$ is embedded except possibly along its boundary, and has area $nA + \varepsilon$. This surface is Poincar\'e dual an element of $n\cdot H^1(M\setminus \gamma,\Z)$. Therefore, the surface must cut $M\setminus \gamma$ into $n$ pieces. Therefore $R$ has $n$ components, each of which fills $\gamma$. One of these components must have area $\leq A+\varepsilon/n$. Now we may take $\varepsilon\to 0$.
\end{proof}
    This lemma means that rational filling for 1-cycles in 3-manifolds is the same as filling, as long as the loops are trivial in homology. The same thing is true for filling surfaces:

\begin{lemma}\label{lem:ratint2}
    Suppose $\Sigma$ is a null-homologous integer 2-cycle on a 3-manifold $M$. If $\Sigma$ can be filled with a rational 3-chain of volume $A$, then $\Sigma$ can also be filled with an integer 3-chain of volume $A$.
\end{lemma}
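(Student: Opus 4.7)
My plan is to exploit the fact that, for a closed oriented 3-manifold $M$, the space of rational (resp.\ integer) 3-cycles is $\Q\cdot[M]$ (resp.\ $\Z\cdot[M]$). Thus the space of rational 3-chains filling $\Sigma$ is a $\Q$-affine line, and the integer fillings form a $\Z$-lattice inside it. The mass of a filling, viewed as a function of the position along this line, will turn out to be piecewise linear and convex with breakpoints only at integer positions, so that its minimum over integers equals its minimum over the reals, which in turn is at most $\vol(T) = A$.

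To implement this, I would first pick any integer 3-chain $T_0$ filling $\Sigma$ (which exists since $\Sigma$ is null-homologous integrally), and then fix a polyhedral triangulation of $M$ with respect to which $T_0$, $T$, and the fundamental class $[M]$ are all simplicial. Write $T_0 = \sum_\sigma d_\sigma \sigma$ with $d_\sigma \in \Z$ and $T = \sum_\sigma c_\sigma \sigma$ with $c_\sigma \in \Q$, where $\sigma$ ranges over the oriented top-dimensional simplices and $[M] = \sum_\sigma \sigma$. Because $T - T_0$ is a rational 3-cycle, there is some $q \in \Q$ with $T - T_0 = q[M]$; equivalently $c_\sigma = d_\sigma + q$ for every $\sigma$.

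For any integer $m$, the chain $T_m := T_0 + m[M]$ is an integer filling of $\Sigma$, and its mass is $\vol(T_m) = \sum_\sigma |d_\sigma + m|\vol(\sigma)$. Replacing $m$ by a real variable $r$, this is a sum of convex piecewise linear functions $r \mapsto |d_\sigma + r|\vol(\sigma)$, each with a single kink at the integer $r = -d_\sigma$. The sum inherits piecewise linearity and convexity with breakpoints only at integers, so its minimum over $\R$ is attained at an integer, yielding
\begin{equation*}
\min_{m\in\Z}\vol(T_m) \;=\; \min_{r\in\R}\sum_\sigma|d_\sigma + r|\vol(\sigma) \;\leq\; \sum_\sigma|d_\sigma + q|\vol(\sigma) \;=\; \vol(T) = A.
\end{equation*}

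The main point to verify is the rigidity step $T - T_0 = q[M]$, which rests on $H_3(M,\Z) \cong \Z$; this is the setting throughout the paper. Producing a common polyhedral refinement supporting all of $T_0$, $T$, and $[M]$ is standard, and if desired one can first apply \cref{thm:fedapprox} to arrange that $T$ is polyhedral with controlled mass. Beyond that, the argument is essentially a one-variable convexity computation, and I do not anticipate further obstacles.
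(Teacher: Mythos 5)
Your proof is correct and follows essentially the same route as the paper: both arguments exploit the fact that two fillings of $\Sigma$ differ by a multiple of $[M]$, consider the one-parameter family obtained by adding $r[M]$, and observe that the mass is piecewise linear in $r$ with breakpoints only at integers, so that the minimum over $\R$ (which is at most $A$) is attained at an integer. The only cosmetic difference is parametrization: the paper writes the filling as $\sum_i a_i x_i$ over the connected components $x_i$ of $M\setminus\Sigma$, notes the fractional parts of the $a_i$ all agree, and restricts to $t\in[0,1]$ where the mass is genuinely linear; you instead pick an auxiliary integer filling $T_0$ and argue via convexity over all of $\R$, which is slightly more machinery than needed but leads to the same conclusion.
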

\begin{proof}
Let $x_1 \dots x_n$ be the connected components of $M \setminus \Sigma$. A rational 3-chain $A$ filling $\Sigma$ may be expressed as $A=\sum_i a_i x_i$ for some rational coefficients $a_i$. Since $\Sigma$ is an integer 2-chain, the fractional part of $a_i$ is the same for neighbouring regions, and is therefore independent of $i$. We can add any multiple of the fundamental class of $M$ to obtain a new filling of $\Sigma$. In other words, for any $t\in \R$, $A(t):=\sum_i (\lfloor a_i\rfloor+t) x_i$ is also a filling of $\Sigma$. Note that $\norm {A(t)}$ is a linear function on the interval $[0,1]$. Therefore, the minimal $L^1$ norm of a filling on this interval is attained at $t=0$ or $t=1$, ie at an integer filling.
\end{proof}

\subsection{Warmup}
Before proving \cref{thm:superpolytorsion}, we prove an easier version which gives linear growth for $|\Ht{M}|$ with respect to volume. It has the advantage that the constants can be made completely explicit.
\begin{theorem}\label{thm:2}
Let $M$ be a rational homology 3-sphere of 1-bounded geometry and let $N$ be the least common multiple of the orders of elements of $H_1(M)$. There exists a universal constant $c$ such that
    $$  N > c\,  h_0(M) h_1(M)  h_2(M)  \vol(M)$$
\end{theorem}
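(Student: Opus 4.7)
The plan is to use the hypothesis that $H_1(M,\Z)$ has exponent $N$ to produce an integer $1$-cycle $\gamma$ of mass $O(1)$ representing a torsion class of order exactly $N$; by Poincar\'e duality $H^2(M,\Z)\cong H_1(M,\Z)_{\mathrm{tors}}$ together with bounded geometry, such a $\gamma$ may be obtained as the dual of an integer $2$-cocycle concentrated near a single simplex. The goal is to bound the minimal integer filling of the null-homologous integer $2$-cycle $N\gamma$ from above using $h_1$, from below using $h_0$ and $h_2$, and combine the two.

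For the upper bound I would apply the $h_1$-expansion to $\gamma$, which is rationally null-homologous since $M$ is a rational homology sphere. This produces a rational $2$-chain bounding $\gamma$ of mass at most $\|\gamma\|_\mass/h_1(M)$; multiplying by $N$ and applying \cref{lem:ratint} to the integrally null-homologous cycle $N\gamma$ gives an integer $2$-chain $S$ with $\partial S=N\gamma$ and $\|S\|_\mass\lesssim N/h_1(M)$.

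For the lower bound I would close $S$ into a $2$-cycle by adding a cone $C$ on $N\gamma$ from a fixed basepoint, which has mass $\lesssim N\diam(M)=2N/h_0(M)$. Applying $h_2$-expansion together with \cref{lem:ratint2} to the now null-homologous integer $2$-cycle $T:=S-C$ produces an integer $3$-chain $W$ with $\partial W=T$ and $\vol(W)\lesssim(\|S\|_\mass+\|C\|_\mass)/h_2(M)$. The crucial geometric step is to argue $\vol(W)\gtrsim\vol(M)$: up to the freedom of adding integer multiples of $[M]$, the chain $W$ is the pull-back to $M$ of (half of) the $\Z/N$-cyclic cover $\widetilde M\to M$ determined by $[\gamma]$, so its integer coefficient function must take at least two distinct values on regions together covering essentially all of $M$, and integrality then forces $\vol(W)\gtrsim\vol(M)$.

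Combining the two bounds gives
$$\vol(M)\;\lesssim\;\frac{N}{h_0(M)\,h_2(M)}+\frac{N}{h_1(M)\,h_2(M)},$$
and since $h_0,h_1\lesssim 1$ for any Riemannian $3$-manifold of $1$-bounded geometry, rearranging yields the desired $N\gtrsim h_0(M)h_1(M)h_2(M)\vol(M)$. The main obstacle will be the lower bound on $\vol(W)$: one must verify that any integer $3$-chain with boundary $T$ genuinely supports the nontrivial $\Z/N$-cover and that the minimum over the $[M]$-coset still carries volume $\gtrsim\vol(M)$. This is exactly where the cover-theoretic ideas enter, and it is the warm-up form of the diameter-of-cover estimates that drive the proof of \cref{thm:superpolytorsion}.
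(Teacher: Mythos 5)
Your proposal tries to localize Gromov's overlap argument to a single well-chosen torsion $1$-cycle $\gamma$, but the crucial lower bound $\vol(W)\gtrsim\vol(M)$ does not hold, and I do not see how to make it hold. The filling $W=\sum_i a_i x_i$ of the null-homologous $2$-cycle $T=S-C$ has integer coefficients that jump across the components of $M\setminus T$, but nothing forces the region where the coefficient is nonzero to have volume comparable to $\vol(M)$: if $T$ happens to be supported in a small piece of $M$ (which can certainly occur, since $S$ and $C$ are just small-mass fillings of $N\gamma$), then $W$ can be supported there too, after translating by the right multiple of $[M]$. The mention of a $\Z/N$-cover does not rescue this: $H^1(M,\Z)=0$ for a rational homology sphere, so $T$ is automatically separating and carries no monodromy data, and in any case the minimal-mass filling of a local $2$-cycle has no reason to ``see'' the whole cover. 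There is also a secondary issue at the start: a $1$-cycle of mass $O(1)$ representing a class of order exactly $N$ need not exist (the torsion $1$-systole of $M$ can be large), so the normalization $\norm{\gamma}_\mass=O(1)$ with a universal constant is not available.

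The paper's proof avoids both problems by working globally rather than locally: it inductively builds a chain contraction $H$ of a fine triangulation of $M$ to a point, using $1/h_0$ for $0$-cells, $1/h_1$ with $\tfrac1N\Z$-coefficients (via \cref{lem:ratint}) for $1$-cells, $1/h_2$ with $\tfrac1N\Z$-coefficients (via \cref{lem:ratint2}) for $2$-cells, and then observes that if every $3$-cycle $H\partial r+r$ had mass below $\vol(M)/N$ it would have to vanish (a nonzero $\tfrac1N\Z$-multiple of $[M]$ has mass at least $\vol(M)/N$), producing a chain contraction of $M$ and contradicting $H_3(M)\neq0$. The volume enters through the nonvanishing of the fundamental class, not through the size of any single filling, and the cover-theoretic refinement you gesture at is what actually appears later in \cref{lem:contractcover}, again as a chain homotopy rather than as a single filling. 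To repair your approach you would essentially have to rebuild this chain-homotopy scaffolding, at which point it becomes the paper's argument.
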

\begin{proof}[Proof of \cref{thm:2}]
    Let $M$ be a rational homology sphere. Let $N$ be the least common multiple of the orders of the elements of $H_1(M,\Z)$. For any 1-chain $\gamma$ in $M$, $n\gamma$ is null-homologous. The proof of the theorem follows Gromov's proof, except that we use chains with coefficients in $\frac 1 N \Z$.
    
    Choose a very fine triangulation of $M$, so that the volume of any simplex is $\leq \varepsilon$. Choose a point $p$. We will try to define a chain contraction $H$ of the simplicial chain complex to $p$. We do this inductively on the $i$-skeleta of the triangulation, controlling the norm of $H$ at each stage.
    
    For every $0$-cell $q$ in the triangulation, find a path of length $\frac 1 {h_0(M)}$ from $p$ to $q$. Define $H(q)$ to be this path. We have $$\norm{H(q)} \leq \frac 1 {h_0(M)}$$ for all $0$-chains $q$.
    
    For every edge $e$ of the triangulation, $H\partial e + e$ is a 1-cycle. The 1-cycle $N(H\partial e + e)$ is null-homologous, so can be filled an integer 2-chain. By \cref{lem:ratint}, $H\partial e + e$ can be filled with a rational 2-chain having coefficients in $\frac 1 N \Z$. Define $He$ to be this rational 2-chain. We have 
    \begin{align}
    \norm{He} &\leq \left(2\frac 1 {h_0(M)} \frac 1 {h_1(M)}+\varepsilon\right)\\
    &\leq 3\frac 1 {h_0(M)} \frac 1 {h_1(M)}+O(\varepsilon)
    \end{align}
    for all $1$-chains $e$.
    
    For every triangle $t$ in the triangulation, $H\partial t + t$ is a 2-cycle with coefficients in $\frac 1 N \Z$. We have 
    \begin{align}
    \norm{Ht} &\leq \frac 1 {h_2(M)} \norm{H\partial t + t}\\
    &\leq \frac 3 {h_2(M)}\left(3\frac 1 {h_0(M)} \frac 1 {h_1(M)}\right)+O(\varepsilon)\\
    &\leq 10\frac 1 {h_2(M)}\frac 1 {h_0(M)} \frac 1 {h_1(M)}+ O(\varepsilon)
    \end{align}
    
    By \cref{lem:ratint2}, this 2-cycle can be filled with a rational 2-chain with coefficients in $\frac 1 N \Z$. For every tetrahedron $r$ in the triangulation, $H \partial r + r$ is a 3-cycle with coefficients in $\frac 1 N \Z$. Suppose this 3-cycle has norm less than $\vol(M)/N$. Then it cannot represent a nontrivial multiple of the fundamental class of $M$. Therefore, it must be the trivial 3-cycle. So we can define $Hr=0$. If this is the case for every tetrahedron $r$, then we have completed our chain homotopy of $M$ to a point, which is a contradiction because $H_3(M)\neq 0$. So there must have been some $r$ with such that $\norm{H\partial r + r} \geq \vol(M)/N$. For this choice of $r$, we now have
    \begin{align}
    \vol(M)/N &\leq \norm{H\partial r + r}\\
        &\leq 40\frac 1 {h_2(M)}\frac 1 {h_0(M)} \frac 1 {h_1(M)} + O(\varepsilon)
    \end{align}
    This desired inequality is obtained by clearing denominators in the inequality above and sending $\varepsilon$ to 0.
\end{proof}

\subsection{Diameter bounds for the universal abelian cover}
The goal of this subsection is to prove the following lemma:

\begin{lemma}\label{lem:torsdiameter}
For any metric space $M$ with $H_1(M,\Z)$ finite, the diameter of its universal abelian cover $\widetilde{M}$ is controlled as follows: $$\diam(\widetilde{M}) \lesssim_{\delta} |H_1(M,\Z)|^{\delta} \diam(M)$$ for any $\delta>0$.
\end{lemma}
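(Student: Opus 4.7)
My plan reduces the diameter of $\widetilde{M}$ to a question about a translation-invariant ``word norm'' on the finite abelian group $A := H_1(M,\Z)$, and then bounds that norm using sumset inequalities.

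First, fix a basepoint $p\in M$ and a lift $\tilde p\in\widetilde{M}$. Since $\pi:\widetilde{M}\to M$ is a local isometry and $\diam M = D$, every point of $\widetilde{M}$ lies within $D$ of the fiber $\pi^{-1}(p)=\{g\tilde p:g\in A\}$, so $\diam\widetilde M \le 2D + \max_{g\in A} d(\tilde p, g\tilde p)$. Set $\|g\| := d(\tilde p, g\tilde p)$, which equals the infimal length of a loop at $p$ representing $g\in H_1(M,\Z)$. This function satisfies $\|g+h\|\le\|g\|+\|h\|$ (by concatenation) and $\|-g\|=\|g\|$ (by reversal), so it suffices to bound $R := \max_{g\in A}\|g\|$ by $C_\delta |A|^\delta D$ for any $\delta>0$.

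Next I produce a short generating set: I claim $A_{3D} := \{g : \|g\| \le 3D\}$ already generates $A$. To see this, discretize $M$ by choosing an $\eta$-net containing $p$ for some $\eta\le D/2$ and forming a Vietoris--Rips-type graph; then pick a spanning tree rooted at $p$. Each non-tree edge completes into a loop through $p$ of length at most $2D + 2\eta \le 3D$, and the homology classes of these loops generate $H_1$ of the discretization and hence $A$.

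The combinatorial heart of the argument is a sumset-growth estimate. From subadditivity $A_r + A_r \subseteq A_{2r}$, so iterated sumsets of $A_{3D}$ eventually fill $A$, and $R \le 3D\cdot 2^k$ where $k$ is the number of doublings needed. To bound $k$ I invoke Kneser's theorem: either $|A_r + A_r| \ge 2|A_r| - 1$ (``trivial stabilizer'' case), in which case $k = O(\log(|A|/|A_{3D}|))$ doublings suffice to exhaust $A$ and one reads off $R \lesssim D \cdot |A|/|A_{3D}|$; or the stabilizer of $A_r + A_r$ is a large subgroup $H$, in which case $A_r$ is concentrated in a union of $H$-cosets and I descend to the quotient $A/H$ with its induced norm, applying the same argument by induction on $|A|$. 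Unwinding the recursion and optimizing for a given $\delta$ produces the bound $R \le C_\delta |A|^\delta D$.

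The main obstacle is making the recursive step air-tight: each descent to a quotient group must preserve enough control on the induced norm that the $|A|^\delta$ factor survives as the group size shrinks, rather than degrading into something like $|A|^{\delta \cdot \text{depth}}$. An alternative packaging --- perhaps closer to what the paper does --- invokes the Freiman--Ruzsa structure theorem, which identifies the obstruction to doubling as containment in a bounded-rank coset progression and allows a direct Cayley-diameter computation in that structured case, bypassing the recursion entirely.
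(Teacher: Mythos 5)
Your reduction to the norm $\|g\| := d(\tilde p, g\tilde p)$ on $A = H_1(M,\Z)$ is fine, and so is the observation that $A_{3D} = \{g : \|g\|\le 3D\}$ generates $A$. But the combinatorial estimate you then need---that any symmetric, subadditive norm on a finite abelian group $A$ whose radius-$3D$ ball generates must satisfy $\max_g\|g\| = O_\delta(|A|^\delta D)$---is simply false, and the obstruction is not merely a matter of making your recursion air-tight. Let $A = \Z/n\Z$ and let $\|\cdot\|$ be $D$ times the word norm for $\{0,\pm1\}$: this is symmetric, subadditive, $A_{3D}=\{0,\pm1\}$ generates, and yet $\max_g\|g\| = \lfloor n/2\rfloor D$, linear in $|A|$. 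Your own Kneser accounting detects this: in the trivial-stabilizer case you read off $R\lesssim D\,|A|/|A_{3D}|$, which is linear, and when $n$ is prime the nontrivial-stabilizer branch never fires, so the recursion bottoms out at the linear bound. Freiman--Ruzsa does not save the estimate either, since $\{0,\pm1\}$ has doubling $5/3$ yet the cycle still has linear diameter. The deeper issue is that the abstract Cayley-graph data you retain (a short generating ball plus subadditivity) cannot distinguish $\Z/n\Z$ from $\Z$, and the lemma is precisely the assertion that finiteness of $H_1(M,\Z)$ forbids the long-cycle geometry in $\widetilde M$.

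The paper's proof uses the covering-space structure in an essential way, not just the norm on $A$. It argues by contradiction: if $\diam(\widetilde{M_i})/\diam(M_i)$ outgrows $|H_1(M_i,\Z)|^\delta$, then the Cayley graphs $G_i$ (on generators of length $\lesssim 10\diam(M_i)$) satisfy the polynomial-growth hypothesis of the Benjamini--Finucane--Tessera theorem (\cref{thm:torusconvergence}) and, after rescaling, Gromov--Hausdorff converge to a flat Finsler torus $T^k$. Using the auxiliary $\varepsilon$-isometry lemmas, the abelian deck group is shown to act on $T^k$ by near-translations; pulling back a nontrivial class in $H^1(T^k)$ then gives an invariant class on $\widetilde{M_i}$ that descends to a nonzero element of $H^1(M_i)$---contradicting the finiteness of $H_1(M_i,\Z)$. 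That final cohomological step is exactly what rules out the $\Z/n\Z$-as-a-long-cycle scenario, and it has no analogue in a purely arithmetic-combinatorics argument; to repair your approach you would need to inject this topological constraint somewhere, which is what you are missing.
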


The proof crucially uses the following result of Benjamini, Finucane, and Tessera:
\begin{theorem}[Main result of \cite{benjamini_scaling_2014}]\label{thm:torusconvergence}
Let $(G_i)$ be an unbounded sequence of finite, connected, vertex transitive graphs such that $|G_i| = O(diam(G_i)^\delta)$ for some $\delta > 0$. Up to taking a subsequence, and after rescaling by the diameter, the sequence $G_i$ converges in the Gromov Hausdorff distance to some finite dimensional torus equipped with some invariant Finsler metric. (Recall that a flat Finsler metric on $\R^n$ is a path metric induced by a norm on $\R^n$, and an invariant Finsler metric on a torus is the quotient of a flat Finsler $\R^n$ by a lattice.)
\end{theorem}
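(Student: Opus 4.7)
The strategy is to reduce the problem to a diameter bound for a vertex-transitive Cayley graph, and then apply Theorem~\ref{thm:torusconvergence} of Benjamini--Finucane--Tessera by contradiction.

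First, I would set up the relevant Cayley graph. Fix a basepoint $p\in M$ and a lift $\widetilde p\in\widetilde M$. Let $G=H_1(M,\Z)$, the deck group of $\widetilde M\to M$, and let $S\subseteq G$ be the symmetric subset of elements realized by loops based at $p$ of length at most $3\diam(M)$. A subdivision argument (chop any loop into arcs of length at most $\diam(M)$ and connect the break points to $p$ by shortest paths) shows that $S$ generates $G$, so we obtain a vertex-transitive Cayley graph $\mathrm{Cay}(G,S)$. A standard path-lifting argument gives
\[
\diam(\widetilde M)\ \leq\ \diam(M)\bigl(1+3\diam(\mathrm{Cay}(G,S))\bigr),
\]
so it suffices to prove $\diam(\mathrm{Cay}(G,S))\lesssim_\delta |G|^\delta$ for every $\delta>0$.

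Next, suppose this subpolynomial bound fails: for some $\delta_0>0$ there is a sequence of spaces $M_i$ whose Cayley graphs $\Gamma_i=\mathrm{Cay}(G_i,S_i)$ satisfy $\diam(\Gamma_i)/|G_i|^{\delta_0}\to\infty$. In particular, $\diam(\Gamma_i)\to\infty$ and $|G_i|=o(\diam(\Gamma_i)^{1/\delta_0})$, so the $\Gamma_i$ form an unbounded sequence of finite, connected, vertex-transitive graphs with $|G_i|=O(\diam(\Gamma_i)^{1/\delta_0})$. Theorem~\ref{thm:torusconvergence} then applies (with parameter $1/\delta_0$): after passing to a subsequence and rescaling by the diameter, the Cayley graphs converge in Gromov--Hausdorff distance to a finite-dimensional torus $T^n$, $n\geq 1$, equipped with an invariant Finsler metric.

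The third step is to derive a contradiction from the torus limit. The Gromov--Hausdorff convergence produces an approximate group homomorphism $\tau_i:G_i\to T^n$ whose image $\Gamma'_i$ becomes $\epsilon_i$-dense in $T^n$ (with $\epsilon_i\to 0$), together with an approximately-equivariant, approximately-Lipschitz extension $\phi_i:\widetilde M_i\to T^n$. Composing with the projection $T^n\to T^n/\Gamma'_i$ yields a $G_i$-invariant map that descends to $\bar\phi_i:M_i\to T^n/\Gamma'_i$. Since $\pi_1(M_i)\twoheadrightarrow H_1(M_i,\Z)=G_i$ is finite, the induced map $\pi_1(M_i)\to\pi_1(T^n/\Gamma'_i)=\Z^n$ is trivial, so $\bar\phi_i$ lifts through the universal cover $\R^n\to T^n/\Gamma'_i$ to a continuous map $M_i\to\R^n$. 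Under the diameter-one rescaling, this lifted map has image of diameter $O(\diam(M_i)/\diam(\widetilde M_i))\to 0$, so its projection back to $T^n/\Gamma'_i$ occupies only a vanishingly small part of the quotient torus. But by construction the image of $\bar\phi_i$ contains the image of the near-dense set $\Gamma'_i$, which has full diameter in $T^n/\Gamma'_i$. This incompatibility in the limit is the desired contradiction. The main obstacle is paragraph three: turning the combinatorial GH-convergence of Cayley graphs into a continuous, approximately equivariant map on $\widetilde M_i$ with controllable Lipschitz constant, and tracking all the metric errors carefully enough for the topological factorization through the finite group $G_i$ to yield a genuine contradiction in the limit.
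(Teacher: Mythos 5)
Your proposal does not prove the statement it was assigned to prove. Theorem~\ref{thm:torusconvergence} is the scaling-limit theorem of Benjamini--Finucane--Tessera itself: given an unbounded sequence of finite, connected, vertex-transitive graphs with $|G_i|=O(\diam(G_i)^\delta)$, a subsequence rescaled by diameter converges in Gromov--Hausdorff distance to a torus with an invariant Finsler metric. The paper does not prove this result; it imports it verbatim from \cite{benjamini_scaling_2014} and uses it as a black box. Your first paragraph already announces that you will ``apply Theorem~\ref{thm:torusconvergence} of Benjamini--Finucane--Tessera,'' so the argument is circular as a proof of that theorem: you assume exactly the conclusion you were asked to establish. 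What you have actually sketched is a proof of \cref{lem:torsdiameter}, the bound $\diam(\widetilde M)\lesssim_\delta |H_1(M,\Z)|^\delta\diam(M)$, which is a downstream consequence.

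An honest proof of Theorem~\ref{thm:torusconvergence} is a substantial piece of geometric group theory and looks nothing like your sketch: Benjamini--Finucane--Tessera pass to an ultralimit of the rescaled graphs, show via the Breuillard--Green--Tao structure theory for approximate groups (or, in their setting, via a quantitative version of Gromov's polynomial growth theorem for vertex-transitive graphs) that the limit is a connected, locally compact, homogeneous geodesic space acted on transitively by an abelian-by-compact group, and then identify it as a torus with an invariant Finsler metric using Gleason--Yamabe and the classification of invariant geodesic metrics on such quotients. None of these ingredients appears in your proposal. Separately, even read as a proof of \cref{lem:torsdiameter}, your third step diverges from the paper's argument: the paper works with the near-action of $H_1(M_i,\Z)$ on the limit torus, approximates each deck transformation by an affine $\varepsilon$-isometry (\cref{lem:affine}), shows via \cref{lem:commute} that all of them must be translations, and then pulls back a class from $H^1(T^k)$ to contradict finiteness of $H_1(M_i)$; your route through an approximately equivariant map $\widetilde M_i\to T^n$ and a lift to $\R^n$ would require constructing that map with quantitative control, which you flag as the main obstacle but do not carry out.
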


Before giving the proof of \cref{lem:torsdiameter}, let us fix some language for describing coarse isometries. Suppose $X$ and $Y$ are metric spaces. Say that two maps $f,g:X\to Y$ are $\varepsilon$-close if $d(f(x),g(x))<\varepsilon$ for all $x\in X$. We say that $\rho:X\to Y$ is an $\varepsilon$-isometric embedding if $|d(x,y) - d(\rho(x),\rho(y))| < \varepsilon$ for all $x,y\in X$. We say $\rho$ is $\varepsilon$-surjective if every point of $Y$ is within distance $\varepsilon$ of $\rho(X)$. We say that $\rho$ is an $\varepsilon$-isometry if $\rho$ is an $\varepsilon$-surjective isometric embedding. We say that two maps $f,g: X\to X$ $\varepsilon$-commute if $fg$ is $\varepsilon$-close to $gf$. The following three lemmas are rather believable; the impatient reader can skip ahead to the proof of \cref{lem:torsdiameter} and refer back to them as necessary.

\begin{lemma}\label{lem:isomembed}
    Suppose $Y$ is a compact metric space such that every ball of radius $<100\varepsilon$ is contractible. Let $X_1$ be a compact metric space. Suppose $f:X_1\to Y$ is an $\varepsilon$-isometry. For any metric space $X_2$ homeomorphic to a $k$-dimensional simplicial complex and having Gromov-Hausdorff distance $\varepsilon/k$ from $X_1$, there is a $10\varepsilon$-isometric inclusion $f':X_2 \to Y$.
\end{lemma}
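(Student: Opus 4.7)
The plan is to build $f'$ by induction on the skeleta of $X_2$: on vertices, define $f'$ by composing $f$ with the vertex correspondence coming from Gromov-Hausdorff closeness; then extend simplex by simplex using contractibility of small balls in $Y$.

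More concretely, after subdividing if necessary, triangulate $X_2$ so that every simplex has diameter less than $\varepsilon/k$. From the hypothesis $d_{GH}(X_1,X_2) < \varepsilon/k$, obtain a map $\phi$ from the 0-skeleton of $X_2$ to $X_1$ with additive distortion less than $2\varepsilon/k$. Set $f'(v) := f(\phi(v))$ on vertices. Since $f$ is an $\varepsilon$-isometry, $f'$ restricted to the 0-skeleton has additive distortion at most $\varepsilon + 2\varepsilon/k \leq 3\varepsilon$, and in particular any two vertices of a single simplex have images within $3\varepsilon$ of one another.

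Now extend skeleton by skeleton. Suppose $f'$ has been defined on the $(i-1)$-skeleton so that, for every $(i-1)$-simplex $\tau$ and every vertex $v$ of $\tau$, the image $f'(\tau)$ lies in the closed ball $B(f'(v), r_{i-1})$ in $Y$. Let $\sigma$ be an $i$-simplex with vertices $v_0,\dots,v_i$. Every $(i-1)$-face of $\sigma$ has some vertex whose $f'$-image is within $3\varepsilon$ of $f'(v_0)$, so by the triangle inequality $f'(\partial \sigma)$ fits inside a ball of radius $r_i := r_{i-1} + 3\varepsilon$ around $f'(v_0)$. Provided $r_i < 100\varepsilon$, this ball is contractible by hypothesis, so the restriction of $f'$ to $\partial \sigma \cong S^{i-1}$ extends continuously over $\sigma$ with image inside the same ball. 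The recursion gives $r_i \leq 3i\varepsilon$, which stays below $100\varepsilon$ for all $i \leq k$ in the relevant range of the application, where $k$ is the (bounded) dimension of the limiting torus. This completes the construction of a continuous map $f' \colon X_2 \to Y$.

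To verify the overall distortion is at most $10\varepsilon$, pick any two points $x,y \in X_2$ and choose vertices $v_x, v_y$ of the simplices containing them, so $d(x,v_x), d(y,v_y) \leq \varepsilon/k$ in $X_2$ and $d(f'(x), f'(v_x)), d(f'(y), f'(v_y)) \leq r_k$ in $Y$ by the ball-containment property. The triangle inequality in both metrics together with the vertex distortion bound gives $|d(f'(x),f'(y)) - d(x,y)| \leq 2r_k + 3\varepsilon + 2\varepsilon/k \leq 10\varepsilon$. The main obstacle is controlling the radii $r_i$ so that every extension still occurs inside a contractible ball; the factor $1/k$ in the assumed GH distance suppresses the contribution of the simplex mesh, leaving only the fixed $\varepsilon$-distortion of $f$ as a per-step cost, which the budget of $100\varepsilon$ accommodates in the applications where $k$ is bounded by an absolute constant.
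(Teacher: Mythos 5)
Your approach is the same as the paper's: triangulate $X_2$ finely, use the Gromov--Hausdorff correspondence and the $\varepsilon$-isometry $f$ to map vertices, then extend skeleton by skeleton using contractibility of small balls. But your explicit radius bookkeeping exposes a discrepancy with the claimed constant that the paper glosses over, and your proof does not close it.

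The recursion $r_i = r_{i-1} + 3\varepsilon$ gives $r_k \sim 3k\varepsilon$ (and, as written, the inductive invariant that $f'(\sigma)$ lies in $B(f'(v),r_i)$ for \emph{every} vertex $v$ of $\sigma$, not just $v_0$, actually costs another additive $\sim 3\varepsilon$ per step, so the true growth is closer to $6\varepsilon$ per step). Your final distortion estimate $2r_k + 3\varepsilon + 2\varepsilon/k$ is therefore of order $k\varepsilon$, which exceeds $10\varepsilon$ as soon as $k\geq 2$. The inequality ``$\leq 10\varepsilon$'' at the end of your argument is simply false for $k\geq 2$, and the sentence conceding that the bound works only ``where $k$ is bounded by an absolute constant'' acknowledges the gap without repairing it: the lemma as stated asks for $10\varepsilon$ independent of $k$.

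The source of the per-step cost is worth noting. Shrinking the mesh below $\varepsilon/k$ makes the simplex-diameter contribution negligible, but it cannot suppress the vertex distortion coming from $f$ itself, which is $\varepsilon$; so the images of a simplex's vertices are already $\Theta(\varepsilon)$ apart and the containing ball must grow by $\Theta(\varepsilon)$ each level. The paper's proof asserts a per-step worsening of only $5\varepsilon/k$ but does not explain how to avoid this $\Theta(\varepsilon)$ cost, so the claim of a $k$-independent $10\varepsilon$ looks optimistic even in the paper. (In the paper's downstream use the ambient dimension $k$ is fixed, so a $C k\varepsilon$ constant would suffice; but that is not what you, or the paper, prove.) Concretely: either justify a per-step cost of $O(\varepsilon/k)$, which requires more than the triangle-inequality bookkeeping you and the paper use, or prove the weaker and correct statement that $f'$ is a $Ck\varepsilon$-isometric inclusion.
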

\begin{proof}
    Choose a triangulation of $X_2$ such that the diameter of any simplex is $\ll \varepsilon$. Since $X_1$ and $X_2$ are close enough in Gromov-Hausdorff distance, one can map the vertices of $X_2$ to $Y$ in a way that respects distances up to an additive error of $\varepsilon$. Since $Y$ is locally contractible, we can extend this map to the higher simplices of $X_2$. Moreover, when extending the map across the $i+1$-skeleton, we worsen the additive distortion of distances by at most $5\varepsilon/k$. After extending across all the simplices, the resulting map distorts distances by at most $k\cdot 5 \varepsilon/k + \varepsilon < 10\varepsilon$.
\end{proof}

\begin{lemma}\label{lem:commute}
    Suppose $\rho:T^k \to T^k$ is an affine isometry of flat finite dimensional torus to itself. Let $A$ be the set of translations which $\varepsilon$-commute with $\rho$. If $\rho$ is not itself a translation, then
    $$ \vol(A) =O(\varepsilon \vol(T^k))$$
    where the implied constants may depend on the geometry of $T^k$.
\end{lemma}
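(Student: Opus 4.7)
The plan is to write $\rho$ in Bieberbach form by lifting to an affine isometry $\tilde\rho(x) = Rx + b$ of $\R^k$. Since $\rho$ descends to $T^k = \R^k/\Lambda$, the linear part $R$ must satisfy $R\Lambda = \Lambda$. A direct computation then shows that $(\rho\circ\tau_t)(x) - (\tau_t\circ\rho)(x) = (R-I)t$, so the two compositions differ by the translation by $(R-I)t$, and $\tau_t$ lies in $A$ if and only if the point $(R-I)t$ lies within distance $\varepsilon$ of $0$ in $T^k$.

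Because $R\Lambda = \Lambda$, the map $R-I$ descends to a continuous group homomorphism $\phi\colon T^k \to T^k$. Its image $H$ is a closed connected subgroup of a torus, hence itself a subtorus. Since $\rho$ is not a translation, $R\neq I$, so $\phi$ is nonzero and $H$ has positive dimension $m \geq 1$. The set $A$ is $\phi^{-1}(B_\varepsilon(0)\cap H)$, and I would bound its volume via the standard pushforward-of-Haar-measure argument: viewing $\phi$ as a surjective continuous homomorphism $T^k \to H$ of compact Lie groups,
\[
\frac{\vol(A)}{\vol(T^k)} = \frac{\vol_H(B_\varepsilon(0)\cap H)}{\vol_H(H)},
\]
where $\vol_H$ is the Finsler volume on $H$, which agrees with Haar measure up to normalization.

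The last step is the small-ball estimate: since $H$ is an $m$-dimensional Finsler submanifold of $T^k$ with $m\geq 1$, we have $\vol_H(B_\varepsilon(0)\cap H) = O(\varepsilon^m) = O(\varepsilon)$ for bounded $\varepsilon$, with an implicit constant depending on the Finsler geometry. Combined with the positive constant $\vol_H(H)$ (also depending only on the geometry), this yields $\vol(A) = O(\varepsilon\,\vol(T^k))$. The main obstacle is not conceptual but merely bookkeeping: checking that $\phi$ is well-defined on $T^k$, matching the induced Finsler volume on $H$ with Haar measure, and confirming that $H$ is positive-dimensional. The real content is the one-line commutator computation reducing everything to small balls in a positive-dimensional subtorus.
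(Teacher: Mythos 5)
Your proof is correct, and it takes a genuinely different route from the paper's. The paper works coordinate-by-coordinate: it writes $\rho(x)=Rx+w$, observes that $v$ $\varepsilon$-commutes with $\rho$ iff $\min_p \|Rv - v - p\| < \varepsilon$, restricts to a bounded (hence finite) set of relevant lattice points $p$, and covers $A$ by finitely many $O(\varepsilon/\|R-I\|)$-tubular neighbourhoods of proper affine subspaces, each of volume $O(\varepsilon\vol(T^k))$. Your argument instead identifies the precise structure of $A$: the commutator is the translation by $(R-I)t$, the integer matrix $R-I$ descends to a homomorphism $\phi\colon T^k \to T^k$ with image a (compact, hence closed) subtorus $H$ of dimension $m\geq 1$, and $A = \phi^{-1}(B_\varepsilon(0)\cap H)$; then pushforward of Haar measure reduces the bound to the small-ball estimate $\vol_H(B_\varepsilon\cap H) = O(\varepsilon^m)$. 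This is more conceptual and shows that $A$ is exactly a Haar-uniform preimage of a small set, rather than merely contained in a union of tubes. One point you should make explicit: your implied constant a priori depends on $H$, hence on the linear part $R$ of $\rho$. The lemma as stated allows dependence only on the geometry of $T^k$, so you need the observation (which the paper makes) that there are only finitely many lattice-preserving linear Finsler isometries $R$, so taking a maximum over them gives a uniform constant. This is a minor patch and the approach is otherwise sound.
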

\begin{proof}
    The set of translations which commute on the nose with $\rho$ has codimension at least one. We would like to upgrade this to the assertion that the set of translations which $\varepsilon$-commute with $\rho$ has small measure. Choose a linear coordinate system which identifies $T^k$ with $\R^k/\Z^k$. Let $\norm{\cdot}$ be the invariant Finsler metric. Write $\rho$ in coordinates as $\rho(x)=Rx + w$, where $R$ is a $k\times k$ matrix and $w$ is a vector. Let $v$ be a vector defining a translation in $A$. Without loss of generality, assume that $v\in [0,1]^k$. Let $N=\sup_{v\in [0,1]^k}\norm{v}$. The condition that $\rho$ and $v$ are $\varepsilon$-commuting maps is
    $$\min_{p\in \Z^k} \norm{Rv - v-p}<\varepsilon$$
    Since $R$ is a $\varepsilon$-isometry, $$\norm{Rv-v} < \norm{Rv} + \norm{v} < 2\norm{v} + \varepsilon < 3N.$$ Therefore, we can safely replace $\min_{p\in \Z^k}$ with $\min_{p\in \Z^k, \norm{p} < 10N}$. It follows that

    \begin{equation}\label{eqn:union}
        A \subseteq \bigcup_{p\in \Z^k, \norm{p}<10N} \{  v \in [0,1]^k \mid \norm{Rv - v -p} < \varepsilon \}
    \end{equation}

    Since $R$ is not the identity, the solutions to $Rv - v - p=0$ are a proper subspace of $\R^n$. The solutions to $\norm{Rv - v -p} < \varepsilon $ are a radius $ \varepsilon / \norm{R-I}$ neighbourhood of this subspace. There are only finitely many possibilities for $R$ once the geometry of $T^k$ is fixed, so $\norm{R-I}$ is bounded below. Finally, 
    \begin{align*}
    \vol\{  v \in [0,1]^k \mid \norm{Rv - v -p} < \varepsilon \} &= O(\varepsilon \vol([0,1]^k))\\
    &= O(\varepsilon \vol(T^k))
    \end{align*}
    Since $A$ is a finite union of such sets, $\vol(A) = O(\varepsilon \vol(T^k))$.
\end{proof}

\newcommand{\Tl}{T^k}

\begin{lemma}\label{lem:affine}
    Suppose $T^k$ is a $k$-dimensional torus with a flat Finsler metric. Suppose $\varepsilon$ is much smaller than the injectivity radius of $T^k$. Then any $\varepsilon$-isometry $f$ of $T^k$ to itself is $10\varepsilon$-close to an affine $\varepsilon$-isometry $f'$.
\end{lemma}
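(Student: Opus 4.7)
Plan. The strategy is to lift $f$ to the universal cover $(\R^k,\norm{\cdot})$ with deck lattice $\Lambda$, read off a linear isometry from the lift's action on $\Lambda$, and then verify that the resulting affine map is pointwise close to $\tilde f$ via a rigidity argument.

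First I would smooth $f$ to a continuous $O(\varepsilon)$-isometry and lift to $\tilde f:\R^k\to\R^k$. For each $\gamma\in\Lambda$, the continuous $\Lambda$-valued function $x\mapsto\tilde f(x+\gamma)-\tilde f(x)$ must be constant by discreteness of $\Lambda$; call its value $\sigma(\gamma)$. The resulting group homomorphism $\sigma:\Lambda\to\Lambda$ extends uniquely to a linear map $R:\R^k\to\R^k$ preserving $\Lambda$. Chaining the local $\varepsilon$-isometry estimate (valid on balls of radius less than $\inj(T^k)$, where the $T^k$- and $\R^k$-distances agree) along a straight-line path from $0$ to $\gamma$ gives $\norm{R\gamma}\leq\norm{\gamma}(1+O(\varepsilon/\inj(T^k)))$, placing $R$ in a small neighborhood of the compact Lie group $O(\R^k,\norm{\cdot})$ of linear Finsler isometries. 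Since $R$ also lies in $GL(\Lambda,\Z)$ and $O(\R^k,\norm{\cdot})\cap GL(\Lambda,\Z)=\Aut(\Lambda,\norm{\cdot})$ is finite (hence discrete), for $\varepsilon$ sufficiently small $R$ must lie in this finite group and is therefore an exact norm isometry.

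Next I would set $f'(x)=Rx+\tilde f(0)$, an affine isometry of $\R^k$ descending to an isometry of $T^k$, and let $g(x)=\tilde f(x)-f'(x)$, which is $\Lambda$-periodic. The goal is to show $\sup_x d_{T^k}([g(x)],[0])\leq 10\varepsilon$; the conclusion then follows after absorbing any constant lattice shift into the choice of lift. Applying the $\varepsilon$-isometry of $f$ at pairs $([x],[y])$ and using that $f'$ is an exact torus isometry translates, via the identity $\tilde f(x)-\tilde f(y)=R(x-y)+g(x)-g(y)$, into the constraint $|d_{T^k}([u+G(u)],[0])-d_{T^k}([u],[0])|\leq\varepsilon$ for $G(u):=g(R^{-1}u)$ and every $u\in\R^k$. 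A Brouwer fixed-point argument applied to the bounded continuous map $u\mapsto-G(u)$ produces $u^*$ with $u^*+G(u^*)=0\in\Lambda$, whence the constraint gives $d_{T^k}([G(u^*)],[0])\leq\varepsilon$. Continuity of $G$, discreteness of $\Lambda$, and $\varepsilon\ll\inj(T^k)$ then pin down the ``closest lattice vector to $G(u)$'' as a single constant $\gamma_0\in\Lambda$ across all $u$, yielding the desired uniform bound once $\gamma_0$ is absorbed into $\tilde f(0)$.

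The main obstacle is this last propagation step: the Brouwer argument controls $G$ at a single fixed point, but upgrading to a uniform pointwise bound requires the continuity of $G$ together with the uniqueness of the nearest lattice vector, which in turn requires $\varepsilon\ll\inj(T^k)$. The subtlety is that the $\varepsilon$-isometry hypothesis on $T^k$ only directly constrains $g$ modulo $\Lambda$; the rigidity of the discrete lattice is what promotes the modulo-$\Lambda$ information into a genuine pointwise estimate.
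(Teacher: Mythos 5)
Your approach (lift to $\R^k$, read off the integer linear map $R$ from the action on $\Lambda$, show $R$ is a genuine Finsler isometry by discreteness of $GL(\Lambda,\Z)\cap O(\R^k,\|\cdot\|)$, then control the periodic remainder $g=\tilde f-f'$) is genuinely different from the paper's, which invokes a one-line averaging argument (average $f$ against its conjugates by translations to produce the affine map). The lift-and-linearize half of your argument is sound and, if anything, more explicit than the paper's; the construction of $R$ and the discreteness argument are fine in spirit, though you should also rule out $\det R=0$ (e.g.\ via the degree of $f$, or by noting that a singular $R$ contradicts $\varepsilon$-surjectivity together with the lower bound $d(f(x),f(y))\ge d(x,y)-\varepsilon$).

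The real gap is in the final propagation step, and it is not just a matter of filling in details. The constraint you extract,
\[
\bigl|\,d_{T^k}([u+G(u)],[0])-d_{T^k}([u],[0])\,\bigr|\le\varepsilon\qquad\text{for all }u,
\]
comes only from applying the $\varepsilon$-isometry hypothesis to pairs $([x],[0])$. This is strictly weaker than the full hypothesis, and it genuinely does not imply $\sup_u d_{T^k}([G(u)],[0])\lesssim\varepsilon$. Concretely, take $T^2=\R^2/\Z^2$ with the Euclidean norm and $u$ near $(\tfrac12,0)$. A displacement $G(u)=(0,t)$ moves $u$ nearly tangentially to the level set of $d(\cdot,\Z^2)$, so
$d([u+G(u)],[0])-d([u],[0])\approx t^2$. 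Your constraint then only forces $t\lesssim\sqrt{\varepsilon}$, not $t\lesssim\varepsilon$. By contrast, using the $\varepsilon$-isometry at the pair $(u,(0,\tfrac12))$ gives a nearly radial comparison and forces $t\lesssim\varepsilon$. So the bound you need requires the two-variable constraint $\bigl|d([x-y],[0])-d([x-y+g(x)-g(y)],[0])\bigr|\le\varepsilon$ used at pairs of \emph{arbitrary} basepoints (or at least enough of them to see each $G(u)$ from a transverse direction), not just at pairs anchored at the origin. Relatedly, the ``continuity plus uniqueness of the nearest lattice vector'' mechanism only runs if you first know $d_{T^k}([G(u)],[0])<\inj(T^k)/2$ for every $u$ --- but that is precisely the uniform estimate you are trying to establish, and the origin-based constraint does not deliver it for $u$ far from the lattice. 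And even granted that the nearest lattice vector is a constant $\gamma_0$, that alone says nothing quantitative about $\|G(u)-\gamma_0\|$; the uniform $10\varepsilon$ bound still needs a separate argument. In short: the Brouwer step correctly pins down one good point $u^*$, but upgrading to a uniform bound requires re-engaging the $\varepsilon$-isometry hypothesis at moving pairs of basepoints, which your write-up does not do.
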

\begin{proof}
    One can find $f'$ using a standard averaging argument: the average of $f$ with all of its conjugates by translation in $\Tl$ is a linear map with the desired properties.
\end{proof}
Note that in \cref{lem:affine}, we cannot guarantee that $f'$ is actually an affine isometry. For example, $f$ might be a reflection on a 2-torus that is close to square but not exactly square. There is no isometry in the isotopy class of $f$.

\begin{proof}[Proof of \cref{lem:torsdiameter}]
    Fix $\delta > 0$. Suppose for the sake of contradiction that there is a sequence of metric spaces $M_i$ with $$\lim_{i\to \infty} \frac{\diam(\widetilde{M_i})}{\diam(M_i) |\Ht{M_i}|^\delta}=\infty$$
    Let's try to understand the coarse geometry of $\widetilde{M_i}$. Let $p$ be a point in $M_i$, and let $p_1\dots p_n$ be its lifts to $\widetilde M_i$. Let $B_1,\dots,B_n$ be the balls of radius $10\diam(M_i)$ centred at $p_1,\dots,p_n$. Form a graph $G_i$ whose vertices are in correspondence with the points $p_1,\dots,p_n$, and with an edge between $p_j$ and $p_k$ if $B_j$ and $B_k$ intersect. In other words, $G_i$ is a Cayley graph for $\Ht {M_i}$ with respect to generators having length $\lesssim10 \diam(M_i)$. The graph $G_i$ has $|\Ht{M_i}|$ vertices, is vertex transitive, and has diameter $\diam(\widetilde M_i)/\diam(M_i)$ up to a constant factor.
    
    By \cref{thm:torusconvergence}, the graphs $G_i$ Gromov-Hausdorff converge (after rescaling lengths by a factor of $1/\diam(G_i)$) to a finite dimensional torus $\Tl$ with an invariant Finsler metric. The idea of the proof is that $\Ht{M_i}$ acts on $G_i$, so it nearly acts on $\Tl$ by isometries. An abelian subgroup of $\Isom(\Tl)$ acting nearly transitively must be a group of translations. But if $\Ht{M_i}$ acts on $\Tl$ by translations, then there is a larger abelian cover of $M_i$ corresponding with unrolling the torus. To make this argument precise, we need to make sense of ``nearly acts''.

    Let $\varepsilon$ be a small number, to be determined later. Using \cref{lem:isomembed}, for large enough $i$, there is a $\varepsilon$-isometry $\iota: G_i \to \Tl$. By \cref{lem:isomembed} combined with \cref{lem:affine}, every isometry $g$ of $G_i$ can be approximated by an affine $\varepsilon$-isometry $\rho(g)$ of $\Tl$. More precisely, we mean that means that $\iota \circ g$ $\varepsilon$-commutes with $\rho(g)\circ \iota$. If $g_1$ and $g_2$ commute, then $\rho(g_1)$ and $\rho(g_2)$ $\varepsilon$-commute. Now apply $\rho$ to the actions of elements of $\Ht{M_i}$ on $G_i$ to obtain the family $\rho(\Ht{M_i}):=\{\rho(g) | g\in \Ht{M_i}\}$ of affine $\varepsilon$-isometries of $\Tl$ which pairwise $\varepsilon$-commute.
    
    Now $\rho$ is not necessarily an action of $\Ht{M_i}$ on $\Tl$. However, the only affine maps which are $\varepsilon$-close to the identity are translations. Therefore, $\rho$ descends to a homomorphism  $$\overline{\rho}:\Ht{M_i}\to \Aut(\Tl)/\text{translations} \cong GL(n, \Z)$$ 
    
    There are only finitely many elements of $GL(k,\Z)$ which can be represented by $\varepsilon$-isometries. Therefore, the image of $\overline \rho$ is finite and $[\Ht{M_i}:\ker(\overline \rho)]$ must be bounded above independent of $i$. The $\varepsilon$-neighbourhood of $\iota G_i$ covers $\Tl$. Hence, there exists an orbit $\mathcal O$ of the $\ker(\overline \rho)$ action on $G_i$ such that the $\varepsilon$-neighbourhood of $\iota \mathcal O$ occupies $[\Ht{M_i}:\ker(\overline \rho)]^{-1}$ of the volume of $T^k$. It follows that the set of translations in the image of $\rho$ has an $\varepsilon$-neighbourhood of volume at least $[\Ht{M_i}:\ker(\overline \rho)]^{-1}$ of the volume of $T^k$. Taking $\varepsilon$ small and invoking \cref{lem:commute}, the only affine $\varepsilon$-isometries of $\Tl$ which $\varepsilon$-commute with such a large set of translations are themselves $\varepsilon$-translations. So in fact $\coker(\overline \rho)=0$ and every element of $\Ht{M_i}$ must act as a translation.

    Now let $\alpha$ be a nontrivial element in $H^1(\Tl)$. Pull back $\alpha$ to a nontrivial element in $H^1(\widetilde{M_i})$. Since every element of $\Ht{M_i}$ acts as a translation, it acts trivially on $\alpha$. So we can average $\alpha$ with all of its $\Ht{M_i}$-translates to obtain an $\Ht{M_i}$ invariant class in $H^1(\widetilde{M_i})$. This class descends to a nontrivial element in $H^1(M_i)$. So $M_i$ has infinite $H_1$. This is a contradiction.
\end{proof}

\begin{lemma}\label{lem:contractcover}
    Suppose $M$ is a sequence of rational homology 3-spheres with Riemannian metrics. Let $\widetilde{M}$ be the universal abelian cover of $M$.  Then $$\diam(\widetilde M) \frac 1 {h_1(M)}\frac 1 {h_2(M)} \geq c\vol(M)$$ for some universal constant $c$.
\end{lemma}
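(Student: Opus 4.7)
The plan is to mimic the chain-contraction argument of \cref{thm:2}, but to carry it out in the universal abelian cover $\widetilde M$ rather than in $M$ itself. The factor $\diam(\widetilde M)$ will play the role that $\diam(M)\sim 1/h_0(M)$ plays there, while the Cheeger constants $h_1(M)$ and $h_2(M)$ of the base will continue to control fillings at the intermediate stages. Because $\widetilde M$ is a finite regular cover of degree $|G|=|H_1(M,\Z)|$, its fundamental class has mass $|G|\vol(M)$, and this is what will give the volume lower bound at the end.

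First I would choose a fine triangulation of $M$ (simplex diameter $\leq\varepsilon$) and lift it to $\widetilde M$, fix a basepoint $\tilde p\in\widetilde M$, and set $\widetilde H(\tilde q)$ to be a geodesic of length $\leq \diam(\widetilde M)$ from $\tilde p$ to each vertex $\tilde q$. For each edge $\tilde e$, the chain $\widetilde H\partial\tilde e+\tilde e$ is a 1-cycle in $\widetilde M$ of norm $\lesssim \diam(\widetilde M)$; I would project it to $M$ via $\pi_*$, where $H_1(M,\R)=0$ makes it null-homologous, use \cref{lem:ratint} together with $h_1(M)$ to build an efficient integer filling in $M$, and lift that filling to a 2-chain $\widetilde H(\tilde e)$ in $\widetilde M$ whose boundary is the prescribed 1-cycle. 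The analogous construction with \cref{lem:ratint2} and $h_2(M)$ defines $\widetilde H(\tilde t)$ for each triangle $\tilde t$. The resulting norm bounds are $\norm{\widetilde H(\tilde e)}\lesssim \diam(\widetilde M)/h_1(M)$ and $\norm{\widetilde H(\tilde t)}\lesssim \diam(\widetilde M)/(h_1(M)h_2(M))$.

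Having built $\widetilde H$ through dimension 2, I look at the 3-cycles $\widetilde H\partial\tilde r+\tilde r$ for each tetrahedron $\tilde r$; each has norm at most a constant times $\diam(\widetilde M)/(h_1(M)h_2(M))+O(\varepsilon)$. If every such 3-cycle vanished, setting $\widetilde H(\tilde r)=0$ would complete $\widetilde H$ into a chain contraction of $C_*(\widetilde M)$, contradicting $H_3(\widetilde M,\Z)=\Z$. Hence some $\widetilde H\partial\tilde r+\tilde r$ is a nonzero integer multiple of $[\widetilde M]$ and so has mass at least $\vol(\widetilde M)=|G|\vol(M)\geq \vol(M)$. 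Combining the upper and lower bounds on this 3-cycle and sending $\varepsilon\to 0$ yields $\vol(M)\lesssim \diam(\widetilde M)/(h_1(M)h_2(M))$, which is the claim.

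The main obstacle is the lifting step: one cannot simply appeal to $h_i(\widetilde M)$ to fill cycles in the cover, since a priori it may be much smaller than $h_i(M)$, and even the basic claim that $\widetilde H\partial \tilde e+\tilde e$ is null-homologous in $\widetilde M$ needs care when $H_1(\widetilde M)\ne 0$. The strategy will be to construct the filling on the base $M$ and then match its lift simplex by simplex with the prescribed boundary in $\widetilde M$; lifts of individual simplices are free modulo the $G$-action, so the task reduces to a combinatorial matching problem once everything is put in integer coefficients via \cref{lem:ratint} and \cref{lem:ratint2}. I expect to handle the consistency by first modifying $\widetilde H$ on the 1-skeleton with short correction paths of length $\lesssim \diam(\widetilde M)$, so that each arising cycle matches a specific lift of the chosen filling in $M$; these corrections do not affect the stated norm bounds up to constants.
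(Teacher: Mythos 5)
Your high-level strategy (build a chain homotopy with integer coefficients, use the diameter of $\widetilde M$ in place of $1/h_0(M)$, and read off a lower bound from the fundamental class) is the same as the paper's. But you commit to building $\widetilde H$ as a chain contraction of $C_*(\widetilde M)$ itself, with $\widetilde H(\tilde e)$ a $2$-chain \emph{in $\widetilde M$}, and that is where the argument breaks. The paper instead defines $H$ on $C_*(\widetilde X)$ with values in $C_*(X)$, so that $H$ is a chain homotopy between $\pi_*\colon C_*(\widetilde X)\to C_*(X)$ and a constant map, and no lifting back to $\widetilde M$ is ever attempted: one simply observes that $\pi_*\colon H_3(\widetilde M,\Z)\to H_3(M,\Z)$ is multiplication by $\lvert H_1(M,\Z)\rvert\neq 0$, hence not null-homotopic. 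The only role of $\widetilde M$ is in the degree-zero step, where the path is chosen in $\widetilde M$ (to guarantee the degree-one cycles downstairs are null-homologous over $\Z$) and then projected.

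The lifting step in your version is a genuine obstruction, not a bookkeeping issue. Given an efficient integer $2$-chain $S$ in $M$ with $\partial S = c$ and a chosen lift $\tilde c$ of $c$ to $\widetilde M$, there need not exist any $2$-chain $\widetilde S$ in $\widetilde M$ with $\pi_*\widetilde S=S$ and $\partial\widetilde S=\tilde c$: representing $S$ by a map $f\colon\Sigma\to M$ from a surface with boundary, $f$ lifts to $\widetilde M$ only if $f_*\pi_1(\Sigma)\subseteq[\pi_1(M),\pi_1(M)]$, which fails once $\Sigma$ has positive genus and $f$ has nontrivial monodromy. Your proposed repair (adjusting $\widetilde H$ on the $1$-skeleton by short paths so that the prescribed boundary ``matches a specific lift of the chosen filling'') addresses the wrong problem: the trouble is not that $\tilde c$ is in the wrong $G$-orbit, but that $S$ may admit no lift with connected boundary at all, and changing $\widetilde H$ at vertices couples all incident edges simultaneously, so there is no local matching to be done. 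Replacing this with the paper's formulation (all fillings live in $M$; the target of $H$ is $C_*(X)$; the contradiction is that $\pi_*$ would be null-homotopic) removes the obstruction and makes the norm bookkeeping you already wrote go through verbatim.
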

\begin{proof}
Although the diameter of $\widetilde M$ appears in the inequality above, note that the other terms are the Cheeger constants of $M$ rather than $\widetilde M$. The pattern of the proof is very similar to the proof of \cref{thm:2}. So we will give an abbreviated description of the argument focusing mainly on the new aspects. Let $\pi: \widetilde{M}\to M$ be the covering map. The idea is that if the inequality is violated, we can construct a chain homotopy with integer coefficients betwen $\pi$ and a constant map. This contrasts with \cref{thm:2} where we attempted to construct a chain homotopy with $\frac 1 N \Z$ coefficients between the identity map $M\to M$ and a constant map.

Choose a basepoint $p\in M$ and a lift $\widetilde p \in \widetilde M$. Let $X$ be a very fine triangulation of $M$, so that the area of any simplex is $\leq \varepsilon$. Let $\widetilde X$ be its lift to $\widetilde M$. We will define the chain homotopy $H$ inductively on the $i$-skeleta of $X$. For each vertex $v\in \widetilde X$, let $\gamma$ be a path from $v$ to $\widetilde p$ of length $\leq \diam(\widetilde M)$. Then define $Hv = \pi(\gamma)$. For each edge $uv$ in $\widetilde X$, the 1-cycle $H(u) - H(v) + \pi(uv)$ lifts to a closed cycle in $\widetilde M$, so it is trivial in $H_1(M,\Z)$. This 1-cycle has length at most $2\diam(\widetilde M) + O(\varepsilon)$. By \cref{lem:ratint}, this 1-cycle can be filled in $M$ with an integer 2-chain of size $2\diam(\widetilde M)\frac 1 {h_1(M)} + O(\varepsilon)$. Note that the 1-cycle to be filled lives in $M$, so the relevant Cheeger constant is $h_1(M)$ rather than $h_1(\widetilde M)$. We declare $H(uv)$ to be this 2-chain.

As in the proof of \cref{thm:1}, extend $H$ to the 2-skeleton. This is always possible because $H_2(M,\Z)$ is trivial for a rational homology sphere. For any triangle $t$, we find
$$\norm{Ht} \leq 10\diam(\widetilde M) \frac 1 {h_1(M)}\frac 1 {h_2(M)} +O(\varepsilon)$$ and for any tetrahedron $r$, we have $$\norm{H\partial r + r}_1 \leq 40\diam(\widetilde M) \frac 1 {h_1(M)}\frac 1 {h_2(M)} +O(\varepsilon).$$

As in the proof of \cref{thm:1}, if $\norm{H\partial r + r} < \vol(M)$, then $H\partial r + r$ cannot be a nontrivial multiple of the fundamental class. So $H\partial r + r=0$ and we can define $Hr=0$ to complete the chain homotopy. This is a contradiction. So $\norm{H\partial r+r} \geq \vol(M)$ for some tetrahedron $r$, and the desired inequality is proven.
\end{proof}

\begin{proof}[Proof of \cref{thm:superpolytorsion}]
    Combine \cref{lem:torsdiameter} and \cref{lem:contractcover}, setting $\delta = \frac 1 k$.
\end{proof}

\begin{theorem}\label{thm:width}
    Suppose $M$ is a closed oriented rational homology 3-sphere with 1-bounded geometry. Suppose $f:M\to \R^2$ is a smooth map. Then there is a point $p\in \R^2$ such that $$|f^{-1}(p)|\geq c\frac 1 {h_1(M)} \frac 1 {h_2(M)}.$$ Here, $c$ is a universal constant.
\end{theorem}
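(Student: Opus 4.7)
The plan is to run a Gromov-style topological overlap argument parallel to the proof of Theorem \ref{thm:2}, now using $f$ to pull back a triangulation of $\R^2$ and build a Cheeger cascade on $M$. Suppose for contradiction that $|f^{-1}(p)|<L$ for every $p$, where $L$ is a sufficiently small multiple of $1/(h_1(M)h_2(M))$. After a small perturbation, fix a triangulation of a disk in $\R^2$ containing $f(M)$, with mesh scale $\varepsilon$ to be chosen, such that $f$ is transverse to all of its strata.

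For each vertex $q$ of the target triangulation, $f^{-1}(q)$ is a 1-cycle in $M$ of mass below $L$; since $H_1(M,\Q)=0$, we may pick a 2-chain $D(q)$ with $\partial D(q)=f^{-1}(q)$ and $\|D(q)\|\leq L/h_1(M)$. For each edge $e=[q_1,q_2]$ of the target, $Z(e):=f^{-1}(e)+D(q_1)-D(q_2)$ is a 2-cycle, which (since $H_2(M,\Q)=0$) bounds a 3-chain $E(e)$ with $\|E(e)\|\leq(\|f^{-1}(e)\|+2L/h_1(M))/h_2(M)$. For each 2-cell $t$, the combination $C(t):=f^{-1}(t)-\sum_{e\in\partial t}\pm E(e)$ satisfies $\partial C(t)=0$, so $C(t)=n_t[M]$ for some scalar $n_t$. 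Summing over all 2-cells, contributions of $E(e)$ on interior edges cancel (each interior edge appears in two adjacent 2-cells with opposite signs) and on boundary edges vanish (empty preimages), while $\sum_t f^{-1}(t)=[M]$; hence $\sum_t n_t=1$, which forces some $|n_t|$ to be nontrivial.

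To convert this into a quantitative contradiction, I would promote the fillings to integer chains using Lemmas \ref{lem:ratint} and \ref{lem:ratint2}, exactly as in the proof of Theorem \ref{thm:2}: work in $\tfrac{1}{N}\Z$-coefficients with $N=|\Ht M|$. Lemma \ref{lem:ratint2} applies directly since $H_2(M,\Z)=0$, while Lemma \ref{lem:ratint} applies after multiplying $f^{-1}(q)$ by $N$ to kill torsion in $H_1(M,\Z)$. Once some $|n_t|$ is bounded below by a definite constant, combining $\|C(t)\|\geq |n_t|\vol(M)$ with the cascade upper bound $\|C(t)\|\leq \|f^{-1}(t)\|+\sum_{e\in\partial t}\|E(e)\|$ and sending $\varepsilon\to 0$ (so that $\|f^{-1}(t)\|$ and $\|f^{-1}(e)\|$ go to zero) yields $1\lesssim L/(h_1(M)h_2(M))$, contradicting the choice of $L$.

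The main obstacle is the rational-to-integer promotion in the presence of torsion in $H_1(M,\Z)$. The integer cascade introduces an overall factor of $N$ in the norm bounds, and the guaranteed lower bound on $\max|n_t|$ from $\sum_t n_t=1$ depends on the number of 2-cells $N_2$; so the key subtlety is to balance the mesh $\varepsilon$ against $N$ so that the surviving lower bound $|n_t|\vol(M)$ genuinely outpaces the upper bound $O(L/(h_1(M)h_2(M)))$ coming from the cascade. Getting this balance right—and doing so without a stray $\vol(M)$ factor in the final estimate—is the heart of the argument, exactly as in Theorem \ref{thm:2}.
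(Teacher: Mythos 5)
Your overall architecture is the right one and matches the paper's: pull back a fine triangulation of $\R^2$, build a chain homotopy $H$ vertex-by-edge-by-triangle using $h_1$ and $h_2$ to control the fillings, observe that $\sum_t \bigl(H\partial t + f^{-1}(t)\bigr)=[M]$ so some triangle contributes a nonzero multiple $n_{t_0}[M]$, and compare norms. But the step you flag as the unresolved ``heart of the argument'' --- the rational-to-integer promotion and the lower bound on $|n_{t_0}|$ --- is handled incorrectly, and the fix is not a balancing act. First, $f^{-1}(q)$ is null-homologous in $H_1(M,\Z)$, not merely in $H_1(M,\Q)$: it bounds the integer 2-chain $f^{-1}(\text{ray from } q \text{ to } \infty)$. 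So there is no need to multiply by $N=|\Ht M|$ or to work with $\tfrac1N\Z$ coefficients; \cref{lem:ratint} applies to $f^{-1}(q)$ as is, and its content is precisely that the \emph{optimal rational} filling (of mass $\lesssim \ell/h_1(M)$, where $\ell=\sup_p|f^{-1}(p)|$) can be replaced by an \emph{integer} filling of the same mass. Likewise $H\partial e + f^{-1}(e)$ is then an integer 2-cycle, and \cref{lem:ratint2} gives an integer 3-chain filling of optimal rational mass. Consequently every $n_t$ is an integer; since they sum to $1$, some $n_{t_0}$ is a nonzero integer, hence $|n_{t_0}|\geq 1$ and $\norm{H\partial t_0+f^{-1}(t_0)}\geq\vol(M)$ with no dependence on the number of 2-cells and no factor of $N$ anywhere. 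Your proposed route (coefficients in $\tfrac1N\Z$, lower bound on $\max|n_t|$ of order $1/N_2$) is the mechanism of \cref{thm:2}, and importing it here creates exactly the losses you then cannot control.

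A second, related confusion: your target inequality ``$1\lesssim L/(h_1h_2)$'' is not what the cascade produces. Comparing $|n_{t_0}|\vol(M)\leq\norm{H\partial t_0+f^{-1}(t_0)}\lesssim \ell/(h_1(M)h_2(M))$ with $|n_{t_0}|\geq 1$ yields $\ell\gtrsim h_1(M)\,h_2(M)\,\vol(M)$; the $\vol(M)$ factor is not ``stray'' but is the point of the estimate (it is what makes \cref{thm:3} give width $\gtrsim\vol(M)^{1-\varepsilon}$ rather than a constant). So your worry about eliminating $\vol(M)$ is aimed at the wrong conclusion, and your contradiction scheme as written does not close.
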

\begin{proof}
    The proof is a standard application of Gromov's overlap theorem, with a few extra things to check since we are working with rational filling constants. 
    
    %Let $\mathcal Z(M,\Z)$ be the space of 1-cycles with $\Z$ coefficients in $M$ equipped with the flat metric. The flat distance between two homologous 1-cycles $a$ and $b$ is the minimum area of a 2-current in $M$ whose boundary is $a-b$. We call an element of $C_i(\mathcal Z(M,\Z))$ an \emph{$i$-sweepout}. (todo: what is the correct terminology?) It is important that we work with $\Z$ coefficients as opposed to $\R$ coefficients, because $\mathcal Z(M,\R)$ is contractible. If $f$ is sufficiently generic, it gives rise to a map $f^{-1}:\R^2 \to \mathcal Z(M,\Z)$ which sends a point $p$ to its preimage in $M$. The map $f^{-1}$ sends points near infinity to the trivial 1-cycle. In fact, $f^{-1}$ represents a 2-cycle in $\mathcal Z(M,\Z)$. This 2-cycle has a nontrivial homology class; one way to see this is to use the existence of a map $\phi:C_i(\mathcal Z(M,\Z)) \to C_{i+1}(M)$ which assembles together all of the 1-cycles in a chain of $\mathcal Z(M,\Z)$ into an $i+1$-cycle of $M$. In our case, $\phi(f^{-1})$ is the fundamental class of $M$, which is nontrivial. Therefore, $f^{-1}$ gives a nontrivial sweepout. There is also a disassembly map can convert some $i+1$-chains in $M$ to $i$-chains in $\mathcal Z(M,\Z)$. More precisely, given any $i+1$-chain $c$ along with a $\phi$-lift of $\partial c$ to $C_{i-1}(\mathcal Z(M,\Z))$,

    Choose a very fine triangulation $T$ of $\R^2$. For each vertex $v$ of $T$, $f^{-1}(v)$ is a 1-cycle in $M$. Crucially, this 1-cycle is trivial in $H_1(M,\Z)$, not just in $H_1(M,\R)$. Let $Hv\in C_2(M)$ be the least area integer filling of this 1-cycle. By \cref{lem:ratint}, it is also least area among rational fillings.

    For each edge $e$ of $T$, $H\partial e + f^{-1}(e)$ is an integer closed 2-cycle. Since $M$ is a rational homology sphere, this 2-cycle has a filling. By \cref{lem:ratint2}, the minimum volume rational filling of this 2-cycle may be taken to be an integer filling.
    
    Finally, for each triangle $t$ of $T$, $H \partial t + f^{-1}(t)$ is a closed 3-cycle. Let's work out how much control we have on $\norm{H\partial t + f^{-1}(t)}$. Let $\ell=\sup_{p\in \R^2} \norm{f^{-1}(p)}$. For any vertex $v$, edge $e$, or triangle $t$, we have
    \begin{align}
        \norm{Hv} &\lesssim \frac 1 {h_1(M)}\ell \nonumber\\
        \norm{He} &\lesssim \frac 1 {h_1(M)} \frac 1 {h_2(M)}\ell \nonumber\\
        \norm{H\partial t+f^{-1}(t)} &\lesssim \frac 1 {h_1(M)} \frac 1 {h_2(M)}\ell\label{eqn:x1}
    \end{align}

    On the other hand,
    \begin{align*}
    \sum_{t\in T} H\partial t + f^{-1}(t)&= \sum_{t\in T} f^{-1}(t)\\
    &=[M]
    \end{align*}
    Therefore, there exists at least one triangle $t_0$ such that $H\partial t_0 + f^{-1}(t_0)$ is a nontrivial multiple of the fundamental class of $M$. So
    \begin{align} \label{eqn:x2}
    \norm{H\partial t_0+f^{-1}(t_0)} &\geq \vol(M).
    \end{align}
    Combining \cref{eqn:x1} and \cref{eqn:x2}, we get
    \begin{align*}
    \ell \gtrsim h_1(M)h_2(M)\vol(M)
    \end{align*}
    
\end{proof}
\begin{proof}[Proof of \cref{thm:3}]
Apply \cref{thm:width} to the sequence of manifolds $M_i$ from \cref{thm:0}, which have $1/h_1(M)$ and $1/h_2(M)$ both $\lesssim \poly \log(\vol(M))$.
\end{proof}

\begin{section}{Questions}\label{sec:questions}
\begin{question}
    Do there exist sequences of $n$-manifolds with unbounded volume and $1/h_i(M)=o(\vol(M)^\varepsilon)$ for $i=0\dots n-1$?
\end{question}
\begin{question}
 Can one produce examples of hyperbolic 3-manifolds with uniform lower bounds on the spectral gap for the  Hodge Laplacian on all $i$-forms?
 \end{question}
 One approach would be to start with a tower of covers of 2-complexes that are good spectral expanders. Our construction respects covers, so it would be enough to arrange that 3-manifold at the bottom of the tower is hyperbolic.
    \begin{question}
      Let $X$ be a metric space with $H_1(X,\Z)$ finite. Let $\widetilde X$ be its universal abelian cover. \cref{lem:torsdiameter} gives an upper bound on $\diam(\widetilde X)/\diam(X)$ in terms of $|H_1(X)|$. Could this bound be improved to $$\diam(\widetilde X)/\diam(X) \lesssim \log(|H_1(X)|)?$$
      \end{question}
      If true, this would improve the superpolynomial lower bound on torsion homology in \cref{thm:superpolytorsion} to an exponential lower bound. Currently, our proof of \cref{lem:torsdiameter} uses the results of \cite{benjamini_scaling_2014} to constrain the Gromov-Hausdorff limit of $\widetilde X$. Their results work for arbitrary vertex transitive graphs. One might be able to prove better results when the symmetry group of the graph is abelian as in our case.
 
    \begin{question}{\cite[Naive conjecture 4]{guth_metaphors_2010}}
        If $g$ is a metric on $T^3$ of 1-bounded geometry, does there exist a continuous map $f:T^3 \to \R^2$ so that for every $p\in \R^2$, the length of the fiber $f^{-1}(p)$ is controlled by the volume of $g$
        $$length(f^{-1}(p)) \leq C \vol(T^3)^{1/3}?$$
    \end{question}
    Guth promoted this question as one of the simplest open questions about metric geometry of 3-manifolds. We do not know any upper bound better than the trivial $O(\vol(T^3))$. The question is also open for any fixed 3-manifold. The technique we used to show that our manifolds have large width requires good rational expansion; therefore, \cref{thm:superpolytorsion} indicates that this strategy cannot prove that a sequence of Riemannian 3-manifolds with bounded torsion homology has large width over $\R^2$.
\end{section}

\printbibliography
\end{document}